\pdfoutput=1
\documentclass[paper=letter, fontsize=10pt,leqno]{scrartcl}
\usepackage[body={5.7in,7.5in}]{geometry}
\usepackage[T1]{fontenc}

\usepackage[english]{babel}
\usepackage{csquotes}

\usepackage[protrusion=true,expansion=true]{microtype}				
\usepackage{amsmath}										
\usepackage{amsthm}
\usepackage{latexsym}
\usepackage{hyperref}

\usepackage[pdftex]{graphicx}
\usepackage{url}
\usepackage{amssymb}
\usepackage{bbm}
\usepackage{MnSymbol}

\usepackage[pdftex]{color}
\usepackage{eucal}
\usepackage{amscd}
 \usepackage{verbatim}
 \usepackage{bigints}
 \usepackage[format=plain, font=small]{caption}
\usepackage{epstopdf}
 \epstopdfsetup{suffix=,}

\usepackage{sectsty}					  
\allsectionsfont{\centering \normalfont\scshape}	  

\usepackage{mathtools}

\usepackage{fancyhdr}
\pagestyle{fancyplain}
\fancyhead{}								
\fancyfoot[C]{}								
\fancyfoot[R]{\thepage}						
\setlength{\headheight}{13.6pt}

\makeatletter
\theoremstyle{definition}
\newtheorem{defn}{\protect\definitionname}
\theoremstyle{plain}
\newtheorem{rmk}{\protect\remarkname}
\theoremstyle{plain}
\newtheorem{prop}{\protect\propositionname}
\theoremstyle{plain}
\newtheorem{assumption}{\protect\assumptionname}
\theoremstyle{plain}
\newtheorem{thm}{\protect\theoremname}
\theoremstyle{plain}
\newtheorem{lem}{\protect\lemmaname}
\theoremstyle{plain}
\newtheorem{cor}{\protect\corollaryname}


\newcommand{\X}{\mathcal{X}}

\newcommand{\inner}[2]{\langle #1, #2 \rangle}



\author{\normalfont \large 
  T. Chumley\footnote{Department of Mathematics and Statistics, Mount Holyoke College, 50 College St, South Hadley, MA 01075}, 
\ R. Feres\footnote{Department of Mathematics and Statistics, Washington University, Campus Box 1146, St. Louis, MO 63130}, 
\ L. Garcia German\footnotemark[2]
}

\makeatother

\providecommand{\assumptionname}{Assumption}
\providecommand{\corollaryname}{Corollary}
\providecommand{\definitionname}{Definition}
\providecommand{\remarkname}{Remark}
\providecommand{\lemmaname}{Lemma}
\providecommand{\propositionname}{Proposition}
\providecommand{\theoremname}{Theorem}

\begin{document}
\title{\textrm{\textmd{\huge{}Knudsen diffusivity in random billiards: spectrum, geometry, and computation}}}
\author{{\large{}T. Chumley}\thanks{Department of Mathematics and Statistics, Mount Holyoke College, 50
College St, South Hadley, MA 01075}, {\large{}R. Feres}\thanks{Department of Mathematics and Statistics, Washington University, Campus
Box 1146, St. Louis, MO 63130}, {\large{}L.~A. Garcia German}\footnotemark[2]}
\maketitle
\begin{center}
Abstract
\par\end{center}
\begin{abstract}
We develop an analytical framework and numerical approach to obtain the coefficient of self-diffusivity for the transport of a rarefied gas in channels in the limit of large Knudsen number.
This framework provides a method for determining the influence of channel surface microstructure on the value of diffusivity that is particularly effective when the microstructure exhibits relatively low roughness. 
This method is based on the observation that the Markov transition (scattering) operator determined by the microstructure, under the condition of weak surface scattering, has a universal form given, up to a multiplicative constant, by the classical Legendre differential operator. 
We also show how characteristic numbers of the system\----namely geometric parameters of the microstructure, the spectral gap of a Markov operator, and the tangential momentum accommodation coefficient of a commonly used model of surface scattering\----are all related.  
Examples of microstructures are investigated to illustrate the relation of these quantities numerically and analytically.  \end{abstract}

\section{Introduction}

\emph{A motivating question and our model.}
In the idealized experiment shown in  Figure \ref{fig:idealized experiment},  a pulse of inert gas
at low pressure is pumped into a long but finite tube, which we refer to as  the {\em channel.} 
The inner surface of the channel has some degree of roughness due to its molecular
structure and surface irregularities. 
The experimenter is able to measure the rate of gas outflow using some device such as a mass spectrometer, 
which generates data of the kind represented by the graph on the right-hand side of the figure.
From such data, transport characteristics of the gas flow through the channel can be derived, as described in \cite{CFZ2016}.
We assume a sufficiently small pulse, under vacuum conditions, to insure  that molecular mean free path is much larger than  the diameter of the channel. 
Thus collisions between the gas molecules can be ignored while gas-surface interaction is expected to influence transport properties most prominently. 
The property  of interest here, which can be indirectly measured from such an experiment, is the Knudsen self-diffusivity coefficient of the gas, as explained, for example, in \cite{CFZ2016}. 
The central question we wish to address is: How do the surface characteristics affect    the Knudsen self-diffusivity?

In this paper, we assume that gas-surface interaction amounts to perfectly elastic, or billiard-like, 
collisions  between  point masses (the {\em gas molecules}, also  referred to here as {\em particles}) and the channel 
surface, and hence  energy exchange between surface and molecules will  be ignored. 
We assume moreover that the channel is two-dimensional and that its surface microstructure is static and periodic, 
and can be described by a relatively small number of geometric parameters.  
Thus the mathematical problem we pose here is to determine how the Knudsen self-diffusivity explicitly depends on these parameters.

In the large Knudsen number limit (i.e., for large mean free paths), 
molecular trajectories are independent of each other 
and the diffusion process is derived from an analysis of individual trajectories of particles undergoing a random flight inside the channel. 
This random flight is governed by a   Markov  operator $P$ that gives, 
at each particle-surface collision, the post-collision velocity of the particle as a random function of the pre-collision velocity. 
 All the information about the periodic surface geometry relevant to the task of obtaining diffusivity is encoded in $P$. 
 In fact, diffusivity corresponds to the variance of a one-dimensional Wiener process   
 obtained from the random flight determined by $P$ via  a Central Limit Theorem. 
  (As explained in \cite{CFZ2016},
  this variance can be obtained from the mean exit time in the limit of long channel lengths in the context of the above idealized experiment.
   The mean exit time, as a function of the channel length, is the only information that needs to be extracted from the exit flow rate data. We won't deal here with this particular aspect of the analysis  and assume, in effect, that the channel is infinite in length.)
   Our main goals are thus centered around two issues. 
   First, we aim to establish a  functional, analytic relationship among the aforementioned variance, the spectrum of the Markov operator $P$, and parameters of the geometric microstructure. 
   Second, we aim to obtain effective numerical methods for finding this dependence for any given geometric microstructure.

   \begin{figure}[h]
\begin{center}
\includegraphics[width=0.8\columnwidth]{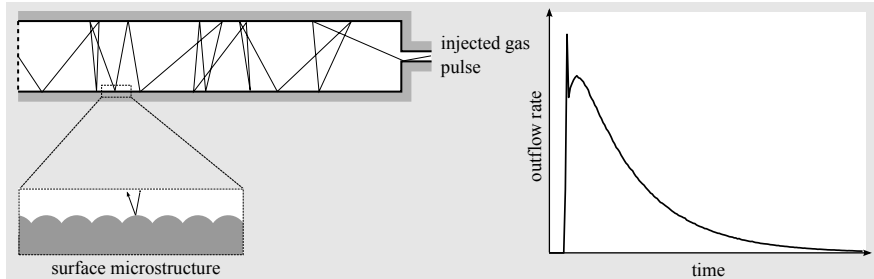}
\caption{ Idealized experiment for measuring diffusivity of a rarefied gas flow through a channel. In the limit of large mean free path, trajectories of gas molecules (point masses) injected into the channel as a short pulse, are independent of each other and their stochastic behavior provides information about the geometric microstructure of the inner surface of the channel. From exit flow rate data one determines the Knudsen self-diffusivity. The mathematical problem posed in this paper is the explicit determination of the diffusivity constant as a function of geometric parameters defining the microstructure.
\label{fig:idealized experiment}}
\end{center}
\end{figure}

\emph{Main results.} 
The main results in the paper are centered around a detailed study of the Markov operator $P$ and
establish analytic and probabilistic properties of $P$ and its corresponding Markov chain. 
To begin, we show that for a large class of microstructures, $P$ has a positive spectral gap,
which in turn establishes the ergodicity of the Markov chain 
as well as the fact that functionals of the Markov chain satisfy the Central Limit Theorem.
We have shown in previous work \cite{Feres2007, FZ2012}
that $P$ is a self-adjoint, compact or quasi-compact operator on an appropriate Hilbert space, 
for microstructures whose sides are concave with curvature bounded away from zero.
However, the present work establishes a positive spectral gap of $P$ for a significantly larger class of microstructures.
Using a conditioning technique, we show that $P$ has positive spectral gap 
when only a certain positive measure portion of the billiard phase space is dispersing.

It is now a classical result in the theory of Markov chains \cite{KV1986} that one can obtain an expression 
of the diffusivity of the Markov chain corresponding to $P$ in terms of an integral over the spectrum of $P$.
A key insight of the present work is that, for relatively flat microstructures, these quantities, 
namely the diffusivity and the spectral gap of $P$, 
are directly connected with a single summary geometric parameter 
that can be computed in a straightforward way from a description of the surface microstructure, 
which we call the surface \emph{flatness parameter} and denote by $h$.
The connection between these three properties of the system is obtained based on a fact which, 
to the best of our knowledge, has not been noted previously in the context of computing the Knudsen diffusivity. 
When scattering by $P$ is relatively weak (in a sense to be made precise), 
it is natural to approximate the operator $P$ in the form $P=I+\mathcal L$, 
where $I$ is the identity operator and $\mathcal L$ may be expected to take the form of a differential (velocity diffusion) operator. 
We show that $\mathcal L$ has a universal form: 
it is a constant (namely, up to a factor of 2, our flatness parameter $h$)
 times the Legendre operator, whose (purely discrete) spectrum is known explicitly. 
We are able to exploit the approximation of $P$ by the Legendre operator
to give an asymptotic expression and error estimates for the Knudsen diffusivity in terms of $h$.

The conceptual link we obtain between $h$, the Knudsen diffusivity, and the spectral gap of $P$ is, in our opinion, 
a new theoretical insight in a very classical subject, which also yields a very effective method of computation,
at least in the case of small values of $h$. 
The final concern of this work is to obtain and validate effective numerical methods for computing the Knudsen  self-diffusivity in terms of the geometric microstructure parameters, in both the small and large $h$ case.
This will be discussed in detail in Subsections \ref{subsec:diff-approx} and \ref{sec:Numerical-techniques-examples} and Sections \ref{sec:Diffusivity} and \ref{sec:Numerical-techniques}. 
A number of numerical experiments involving different microstructures will also be explored. 

\emph{Remarks on assumptions.}
We make a few remarks about the assumptions in our model.
 The analysis developed in this paper does not require in an essential way  all the assumptions made,
 but we hope that the greater simplicity of the present set-up will help to make clearer the main  points.  
For example, we have made a deliberate choice to consider periodic profiles of microstructures with relatively few geometric parameters to emphasize the relationship between geometric parameters, the spectrum of the Markov operator, which in turn establishes the relationship between geometric parameters and Knudsen diffusivity.
While it's possible to give similar formulas relating Knudsen diffusivity and geometric parameters for, say, randomly chosen profiles, such formulas are straightforward but tedious to express, and we fear would muddle the main point. 
 
 When studying the Knudsen self-diffusivity, 
 the observable which measures particle flight between collisions in the channel has infinite variance.
 A study of the Central Limit Theorem and Knudsen diffusivity for a different class of random billiard Markov chains with infinite variance observables has been done previously in \cite{CFZ2016}.
While the methods in the present work can be adapted to the case of infinite variance observables,
we have chosen to use a cut-off observable to reduce to the finite variance case for the sake of clarity.
Besides clarity, there are a number of physically relevant reasons for considering the cut-off observable we have used in our examples.
Namely, (1) the cut-off can arise for macroscopic curvature of the channel in which the test particle traverses. 
It can also arise due to (2) a finite mean free path resulting from unlikely but non-zero probability particle-particle collisions in the large but finite Knudsen number regime, 
and (3) real systems where the channel is of finite length and bounded on either end. 
These physically relevant mechanisms are discussed in detail in \cite{ACM2003}. 

Finally, we should note that in the case of three dimensional cylinders, the inter-collision distance observable is always of finite variance, so our methods in the current paper serve as prototypes for this generalization. 
The techniques we introduce here are in fact not particular to dimension $2$. 
Indeed, a multivariable Legendre operator on the unit disc,
whose spectral theory is explicitly known, 
plays the same role in higher dimensions as the classical Legendre operator does in the present work.
The details of this approximation and the corresponding models in higher dimensions---
where we consider three-dimensional cylindrical channels, parallel plates, and allow for collisions
that induce energy exchange between gas and surface at a given surface temperature---
are at the core of future work currently in preparation.
   
\emph{Related work.}
Better understanding of rarefied gas transport has practical implications for a number of engineering fields including high altitude gas dynamics, porous media, vacuum technology, nano- and microfluidics, among others. These applications have stimulated much experimental work. The following list of   papers is a far from thorough or systematic sample of such  work:  \cite{A2014,M2009,PGEM2011,VNHDV2009,YMN2012}. The reader interested in the more applied side of the subject should consult these sources and others cited in them. From a purely mathematical perspective, this   is a rich source of well motivated and potentially fruitful problems in the general theory of stochastic  processes,  and, more specifically, in 
the study of the stochastic dynamics of {\em random billiard systems}. This is 
our main motivation for studying the subject. We mention from  the mathematical literature  the following, also necessarily incomplete, list: \cite{ABS2013, BBG2019, CPSV2009, E2001, KY2013}.

\emph{Organization of the paper.}
The rest of this paper is organized as follows. In Section \ref{sec:definitions and results} we 
detail our main results after introducing the necessary definitions; 
we define what we call the {\em random billiard Markov chain} model in detail and state
some of its basic properties. Among the main results stated in Section \ref{sec:definitions and results} 
(and proved in more general form later in the paper) we have that
 under certain geometric conditions on the boundary microstructure,
the Markov chain has positive spectral gap and is uniformly ergodic. Numerical
evidence for this is then given for a few  examples. With ergodicity
in hand, we discuss the central limit theory of the Markov chain providing explicit expressions for the variance of the limit diffusion
in terms of the Markov operator $P$. The main analytic technique for computing
diffusivity, based on a Galerkin method for solving a Markov-Poisson equation and a key observation that $P$ is closely related to the Legendre differential operator, is also given in this introductory section.  This approach for obtaining diffusivity is then compared with 
other more straightforward methods for a family of microstructures we call the {\em simple bumps} family.
A few more examples of microstructures are explored, having in mind the  relation between geometric parameters, diffusivity, and spectral gap. 
Section \ref{sec:Spectral-gap} is dedicated to stating and proving the analytical results of the paper in their general form, 
while Section \ref{sec:Diffusivity} details, and adds further information, to the numerical methods and their validation. 

\section{Main definitions and results\label{sec:definitions and results}}
\subsection{The billiard cell and its transition operator $P$}
The notation $\mathcal{P}(\Omega)$ will be used below to denote the space of probability measures on a measurable space $\Omega$. If $\mu$
is a measure on $\Omega$ and $f:\Omega\rightarrow \mathbb{R}$ is $\mu$-integrable, we write the integral of $f$ with respect to $\mu$ as
$$\mu(f):=\int_\Omega f(\omega)\, \mu(d\omega). $$
The Hilbert space of square integrable functions  with respect to $\mu$ 
and its subspace of functions with mean zero will be written
$$L^2(\Omega,\mu)=\left\{f: \mu\left(f^2\right)<\infty\right\},  \ L_0^2(\Omega,\mu)=\left\{f\in L^2(\Omega,\mu): \mu(f)=0\right\},$$   with  inner product    $\langle f,g\rangle_\mu:=\int_\Omega f(\omega)g(\omega)\, \mu(d\omega)$ and  norm $\|f\|_\mu:=\langle f,f\rangle_\mu^{1/2}$.
Moreover, we define a norm on the space of square integrable probability measures on $\Omega$ which are absolutely continuous with respect to $\mu$ as follows. Let $\nu$ be such a measure, so that $f$ is the Radon-Nikodym derivative of $\nu$ with respect to $\mu$. Then $\|\nu\|_\mu := \|f\|_\mu$.

 \begin{figure}[h]
\begin{centering}
\includegraphics[width=0.8\columnwidth]{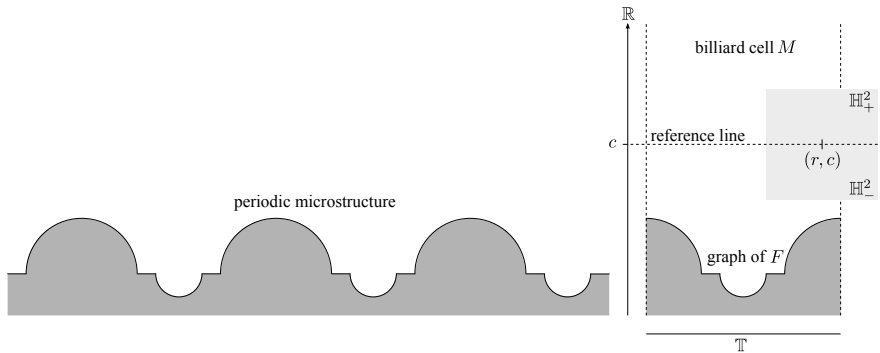}
\par\end{centering}
\caption{ A periodic microstructure and its billiard cell, with some of the notation used  to define the random billiard map and  its transition operator $P$.
For some of our results we assume that the boundary curve is the graph of  a piecewise smooth function $F:\mathbb{T}\rightarrow \mathbb{R}$.
\label{fig:billiard cell}}
\end{figure}

The general set-up will be that of a two-dimensional {\em random  billiard} with static, periodic, geometric microstructure, as in \cite{CFZ2016,CF2012,Feres2007,FNZ2013,FY2004,FZ2012}.  The periodic structure 
is defined by the choice of a {\em billiard cell} $M$, from which the Markov operator $P$ will be defined.  The billiard cell is a subset $M$ of  $\mathbb{T}\times \mathbb{R}$, where $\mathbb{T}$ denotes the $1$-dimensional torus (equivalently, the interval $(0,\ell)$ with periodic condition imposed at the endpoints, where $\ell$ will typically be set equal to $1$.)  
The boundary of the billiard cell is assumed to be a piecewise smooth curve.
For some of the results given below, the boundary  will be the graph of a piecewise smooth function $F:\mathbb{T}\rightarrow \mathbb{R}$, so that $M$ consists of the points $(r,y)$ such that $y\geq F(r)$. Choose an arbitrary value $c$ such that $c>F(r)$ for all $r\in \mathbb{T}$. The line $y=c$ will  be called the {\em reference line}. At any point $(r,c)$ on the reference line we define the half spaces $\mathbb{H}^2_-$ and
$\mathbb{H}^2_+$ of incoming and outgoing velocities, respectively. Thus $(r, c, v)\in M\times \mathbb{H}^2_-$ represents the initial conditions of 
an incoming particle trajectory. These conditions uniquely specify (for almost every $r$ and $v$) a billiard trajectory: upon hitting a non-corner point on the cell boundary, the particle reflects specularly without changing speed, and upon crossing a vertical boundary line of $M$ (more precisely, a line
separating two adjacent cells, represented 
 in Figure \ref{fig:billiard cell} by the vertical dashed lines) it reenters the other (dashed) line with unchanged  velocity.  With probability $1$ on the set of initial conditions (due to Poincar\'e's recurrence), the trajectory returns to the reference line, at which point we register its   outgoing velocity
 $V(r,v)\in \mathbb{H}^2_+$ and new position $r'$. Without risk of confusion we may identify (through reflection about the reference line) $\mathbb{H}^2_-$ and $\mathbb{H}^2_+$, denoting both by
 $\mathbb{H}^2$. We have thus defined a transformation $(r,v)\mapsto (r',V(r,v))$ (for almost all initial conditions $(r,v)$) on $\mathbb{T}\times \mathbb{H}^2$.    We call   this transformation the  {\em return billiard map}.

 Note that the vector norms satisfy $|v|=|V|$ since collisions are elastic. We may, without loss of generality, assume that the particle trajectories have unit speed.  The incoming or outgoing {\em state space}, consisting of  initial or return velocities, can then be taken to be the interval $\mathcal{X}=(0,\pi)$ of angles the particle velocity makes with  the reference line.  We can (and often will) equivalently define $\mathcal{X}=(-1,1)$ as the set of values of the  cosine of those angles. Given an initial velocity $x \in \mathcal X$, we will often denote the return velocity by $X(r,x)\in\mathcal X$ in analogy with the earlier notation of velocities $v$ and $V(r,v)$ in $\mathbb{H}^2$.
 
 Let $\mathcal{P}(\mathcal{X})$ denote the space of probability measures on $\mathcal{X}$. Given an incoming velocity $v$, let us suppose that $r=U$
 is a random variable with the uniform distribution over $\mathbb T$. Thus $X(U,x)$ becomes a random variable.  We now define the Markov (or transition probabilities) operator $P$ as follows.
 Let $f$ be any bounded and continuous function on $\mathcal{X}$ and define
 $$\left(Pf\right)(x):=E\left[f(X(U,x))\right] = \int_\mathbb{T} f(X(r,x))\, dr, $$
 where $dr$ is the length element of normalized Lebesgue measure on $\mathbb T$.
 Equivalently, we define a sequence of random variables $(X_n)_{n\geq 0}$ with a given initial distribution $\mu \in \mathcal P(\mathcal X)$ as follows. Let $(U_n)_{n\geq0}$ be an independent, identically distributed sequence of random variables uniformly distributed on $\mathbb T$, and, for each $n\geq 0$, let
 $$X_{n+1} := X(U_n, X_n).$$
    The justification for assuming, at each scattering event, that the point $r$ of entry over the opening of a billiard cell is random and uniformly distributed is due to our regarding the billiard cell as being very small relative to other length scales; any small uncertainty in the incoming velocity will make $r$ nearly fully uncertain. See \cite{FY2004} for a more detailed explanation of this point. 
 
We can also regard $P$ as a map from $\mathcal{P}(\mathcal{X})$ to itself:  Given any $\mu\in\mathcal{P}(\mathcal{X})$, let  $\mu P\in \mathcal{P}(\mathcal{X})$ be such that for any test function $f$ (bounded and continuous),
$$(\mu P)(f) := \mu(Pf).$$

 The following   summarizes the   basic properties of $P$.
For  
 their proofs, see \cite{Feres2007,FZ2012}. We say that the billiard cell $M$ is {\em bilaterally symmetric} (or simply {\em symmetric}) if it is invariant under reflection through the middle vertical
 line. When the boundary of the cell is the graph of a function $F$, this means that
 $F(r)=F(\ell-r)$
 for all $r\in (0,\ell)$.
 
\begin{prop}\label{prop:P}
The Markov operator $P$, for any given billiard cell,
has the following properties.
\begin{enumerate}
\item The measure $\pi\in\mathcal{P}(\mathcal{X})$ given by $\pi(d\theta)=1/2\sin\theta\,d\theta$
is stationary of $P$. That is, $\pi P=\pi$. 
\item As an operator on $L^{2}(\mathcal{X},\pi)$, $P$ has norm 1.
\item If  $M$   is  symmetric,  $P$ is self-adjoint
  and the stationary 
Markov chain is reversible.
\end{enumerate}
\end{prop}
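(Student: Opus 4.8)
\emph{Proof proposal.} My plan is to derive all three assertions from one structural fact: the return billiard map $T\colon(r,x)\mapsto\bigl(r',X(r,x)\bigr)$ is an almost-everywhere-defined invertible transformation of $\mathbb{T}\times\mathcal{X}$ preserving the product measure $dr\otimes\pi$. To see this I would regard the reference line $\{y=c\}$ as a global cross-section of the billiard flow in the cell $M$: the flow preserves Liouville measure (position $\times$ direction), it returns to $\{y=c\}$ with full probability by Poincar\'e recurrence, and the first-return map to a cross-section preserves the associated Santal\'o cross-section measure, which along $\{y=c\}$ equals $|\text{normal velocity component}|\,dr\,d\theta=\sin\theta\,dr\,d\theta$; normalizing and passing to $\mathcal{X}=(-1,1)$ via $x=\cos\theta$ (under which $\tfrac12\sin\theta\,d\theta=\tfrac12\,dx$) turns this into $dr\otimes\pi$. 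The wrap-around gluing on the vertical cell walls affects only the $r$-coordinate and trivially preserves $dr$, while the identification of incoming with outgoing velocities by reflection about $\{y=c\}$ leaves the state $x$ (the horizontal velocity component) fixed. Granting this, part (1) is immediate: for bounded continuous $f$, $(\pi P)(f)=\int_{\mathbb{T}\times\mathcal{X}}f\bigl(X(r,x)\bigr)\,dr\,\pi(dx)=\int (f\circ\mathrm{pr}_{\mathcal{X}})\circ T\,d(dr\otimes\pi)=\int f\circ\mathrm{pr}_{\mathcal{X}}\,d(dr\otimes\pi)=\pi(f)$.

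For part (2), since $(Pf)(x)=\int_{\mathbb{T}}f(X(r,x))\,dr$ is an average of $f$ against a probability measure, Jensen's inequality gives the pointwise bound $(Pf)^2\le P(f^2)$; integrating against $\pi$ and using $\pi P=\pi$ from part (1) yields $\|Pf\|_\pi^2\le\pi\bigl(P(f^2)\bigr)=\pi(f^2)=\|f\|_\pi^2$, so $P$ extends to a contraction of $L^2(\mathcal{X},\pi)$, and $\|P\|=1$ because $P$ fixes constants. For part (3) I would produce a $(dr\otimes\pi)$-preserving involution $K$ of $\mathbb{T}\times\mathcal{X}$ that acts trivially on the state coordinate and conjugates $T$ to $T^{-1}$, and then read off self-adjointness by a change of variables. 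The involution is built from two symmetries of billiard motion. Time reversal: negating all velocities carries a trajectory to its time-reverse, which on the level of $T$ (using that the in/out identification fixes $x=\cos\theta$ whereas full velocity reversal flips the horizontal component) reads $T(r,x)=(r',x')\implies T(r',-x')=(r,-x)$. Bilateral symmetry: when $F(r)=F(\ell-r)$, reflection through the cell midline $(r,x)\mapsto(\ell-r,-x)$ commutes with the dynamics, so $T(r,x)=(r',x')\implies T(\ell-r,-x)=(\ell-r',-x')$. Composing these two implications gives $T(\ell-r',x')=(\ell-r,x)$, that is $TKT=K$ with $K(r,x):=(\ell-r,x)$; since $K^2=\mathrm{id}$ this is exactly $KTK=T^{-1}$, and $K$ evidently preserves $dr\otimes\pi$ and fixes $x$. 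Writing $\tilde f=f\circ\mathrm{pr}_{\mathcal{X}}$ and $\tilde g=g\circ\mathrm{pr}_{\mathcal{X}}$, we have $\langle Pf,g\rangle_\pi=\int(\tilde f\circ T)\,\tilde g\,d(dr\otimes\pi)$ and $\langle f,Pg\rangle_\pi=\int\tilde f\,(\tilde g\circ T)\,d(dr\otimes\pi)$; applying $T$-invariance of $dr\otimes\pi$, then $\tilde g\circ T^{-1}=\tilde g\circ T\circ K$ (from $KTK=T^{-1}$ and $\tilde g\circ K=\tilde g$), then the measure-preserving change of variables $K$ together with $\tilde f\circ K=\tilde f$, equates the two, so $\langle Pf,g\rangle_\pi=\langle f,Pg\rangle_\pi$ for all bounded continuous $f,g$. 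Density of such $f,g$ and boundedness of $P$ then give self-adjointness of $P$ on $L^2(\mathcal{X},\pi)$, which is precisely the detailed-balance (reversibility) condition for the stationary chain.

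The one ingredient I would simply quote is the classical fact that billiard dynamics preserves the Santal\'o cross-section measure, citing \cite{Feres2007,FZ2012}. The step that needs real care is the bookkeeping in part (3): $T$ is defined only off a $(dr\otimes\pi)$-null set of pathological initial conditions (orbits hitting corners, grazing the boundary, or failing to return, all null by Poincar\'e recurrence), so the time-reversal and midline-reflection identities must be justified on the co-null set where both sides make sense; and the incoming/outgoing identification and the various sign conventions must be tracked consistently so that, e.g., full velocity reversal genuinely sends the state $x$ to $-x$ while the reflection identification does not. I expect this to be the main --- though not conceptually deep --- obstacle.
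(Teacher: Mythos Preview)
Your argument is correct, and in fact you have supplied considerably more than the paper does: the paper offers no proof of this proposition at all, merely stating ``For their proofs, see \cite{Feres2007,FZ2012}.'' Your approach---deducing stationarity from invariance of the Santal\'o cross-section measure $dr\otimes\pi$ under the return billiard map $T$, obtaining the norm bound via Jensen and stationarity, and deriving self-adjointness from the conjugacy $KTK=T^{-1}$ built out of time reversal composed with the bilateral reflection---is precisely the standard line of reasoning one finds in those cited sources, so there is no methodological divergence to discuss.

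One small remark on your part~(3) bookkeeping, since you flagged it yourself: the identity $\tilde g\circ T^{-1}=\tilde g\circ T\circ K$ that you write is obtained by using $T^{-1}=KTK$ and then absorbing the \emph{outer} $K$ into $\tilde g$ via $\tilde g\circ K=\tilde g$; the subsequent change of variables by the \emph{inner} $K$ then uses $\tilde f\circ K=\tilde f$ and $K^2=\mathrm{id}$ to finish. You have this right, but it is worth stating the two absorptions separately when you write it up, since it is easy to conflate them. The null-set issues you mention are genuine but routine: the set where $T$ or $T^{-1}$ fails to be defined is $(dr\otimes\pi)$-null and is carried to itself by both $K$ and the time-reversal symmetry, so all identities hold on a common co-null invariant set.
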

 
Note that when $\mathcal X = (-1,1)$, it is straightforward to see by a change of variables that the stationary measure $\pi$ is given by the uniform measure $\pi(dx) = 1/2\,dx$.

 If the the billiard cell is not bilaterally symmetric, the adjoint of $P$ is  still closely related to $P$ as described in \cite{CF2012} and much of the analysis developed in this paper still applies. For simplicity, we do not consider the more general type of cells here. 

\subsection{Spectral gap and ergodicity\label{sec:Spectral-gap}}

Let $(X_n)_{n\geq 0}$ be the Markov chain with transition operator $P$ and initial distribution $\mu$. Then the measure $\mu P^n$ is the law
of the $n$th step $X_n$.  We are interested in the convergence of $\mu P^n$ to the stationary measure $\pi$ in the sense of total variation. Recall that the {\em total variation} of a measure $\mu$ is defined as
$$\|\mu\|_{v}:= \sup_{A\subset \mathcal{X}} |\mu(A)|. $$
\begin{defn}
A Markov chain with stationary distribution $\pi$ is \emph{$\pi$-a.e.
geometrically ergodic }if there exists $0<\rho<1$ such that for $\pi$-a.e.
$x\in\mathcal{X}$ there exists a constant $M_{x}>0$ possibly dependent
on $x$ such that
$
\left\Vert \delta_{x}P^{n}-\pi\right\Vert _{v}\leq M_{x}\rho^{n}
$
for all $n\geq1$.
\end{defn}

The operator
$P$ has \emph{spectral gap }if there exists a constant $0<\rho<1$
such that $$\left\Vert Pf\right\Vert _{\pi}\leq\rho\left\Vert f\right\Vert _{\pi}$$
for all $f\in L_{0}^{2}(\mathcal{X},\pi)$. The value $\gamma:=1-\rho$
is called the {\em spectral gap} of $P$. It is straightforward
to see that  for a compact and self-adjoint $P$, $\rho$ is     given by the largest
eigenvalue of $P$ restricted to $L_{0}^{2}(\mathcal{X},\pi)$ and
$\gamma>0$. Finally, we note that if $P$ has spectral gap and is
self-adjoint, then for any initial distribution $\mu$ which is absolutely
continuous with respect to $\pi$, there exists a constant $M_{\mu}>0$
such that
\[
\left\Vert \mu P^{n}-\pi\right\Vert _{v}\leq M_{\mu}\rho^{n}.
\]
See \cite{RR1997}. We will prove
geometric ergodicity for a large class of microstructures satisfying
certain geometric conditions.

\begin{figure}[h]
\begin{centering}
\includegraphics[width=0.3\columnwidth]{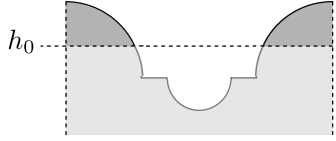}
\par\end{centering}
\caption{An example of billiard cell for which Theorems \ref{thm:geo_erg} and \ref{thm:clt} hold. Other than being bilaterally symmetric, its shape is essentially arbitrary below a line $y=h_0$ whereas above it, the boundary  consists of two smooth concave lines with  curvature bounded below by some positive number $K$.}
\label{fig:theorem hypothesis}
\end{figure}

The following   is a special case of a more general result to be stated  and proved in Section \ref{sec:Spectral-gap}.   We call the {\em height of the billiard cell} the supremum of the $y$ coordinate function restricted to  the boundary of the cell. 

\begin{thm}\label{thm:geo_erg}
Let $P$ be the Markov transition operator  for a random billiard Markov chain whose billiard cell is  symmetric and satisfies the following property:
above a certain   $y=h_0$   strictly less than the height of the cell, the cell boundary  is the union of smooth, concave curves having  curvature bounded away from $0$. Then $P$ is a self-adjoint   operator  with a positive spectral gap. As a result, there exists a constant $\rho\in (0,1)$ such that for each  $\mu \in \mathcal{P}(\mathcal{X})$ with $\|\mu\|_\pi<\infty$,  
$$ \left\|   \mu P^n -\pi \right\|_v\leq M_\mu \rho^n$$
for some constant 
$M_\mu<\infty$ and $n\geq 1$.
\end{thm}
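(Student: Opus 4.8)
Since the cell $M$ is bilaterally symmetric, $P$ is self-adjoint by Proposition \ref{prop:P}(3), and the stated total-variation bound is exactly the implication recalled just before the theorem (\cite{RR1997}): writing the $\pi$-density of $\mu$ as $1+f$ with $f\in L^2_0(\mathcal X,\pi)$, self-adjointness makes $P^nf$ the $\pi$-density of $\mu P^n-\pi$, whence $\|\mu P^n-\pi\|_v\le\tfrac12\|P^nf\|_\pi\le\tfrac12\rho^n\|f\|_\pi$. So the whole content of the theorem is that $P$, restricted to $L^2_0(\mathcal X,\pi)$, has norm $\rho<1$, i.e.\ a positive spectral gap.

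To obtain the gap I would use a conditioning (first--excursion) decomposition of the return billiard map that quarantines its dispersing part. Fix an incoming direction $x\in\mathcal X$ and recall $(Pf)(x)=\int_{\mathbb T}f(X(r,x))\,dr$. Partition $\mathbb T=G_x\sqcup B_x$, where $r\in G_x$ iff the trajectory issued from $(r,c)$ in direction $x$ returns to the reference line while its height stays above $h_0$ throughout, and $r\in B_x$ otherwise. With $\alpha(x):=|G_x|$ this gives the pointwise convex splitting
\[
P(x,\cdot)=\alpha(x)\,Q(x,\cdot)+\bigl(1-\alpha(x)\bigr)\,R(x,\cdot),
\]
where $Q,R$ are Markov kernels: $R$ collects trajectories that descend below $h_0$ and behave in an uncontrolled way in the arbitrary part of the cell, contributing only an operator of norm $\le 1$ on $L^2(\pi)$; $Q$ collects trajectories that interact \emph{only} with the concave arcs above $y=h_0$, whose curvature is bounded below by $K>0$.

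The substantive step is to extract a uniform regularization from $Q$. Since the boundary met by the trajectories underlying $Q$ is strictly dispersing, the hyperbolic/Jacobian estimates used in \cite{Feres2007,FZ2012} to prove compactness of $P$ for globally dispersing cells apply to the restricted map $r\mapsto X(r,x)$ on $G_x$: the pushforward of normalized Lebesgue measure under it has a $\pi$-density bounded above, and bounded below on a fixed interval $C$ with $\pi(C)>0$, uniformly over a positive-$\pi$-measure set of directions $x$. Heuristically, a direction $x$ not too close to grazing has probability bounded below (over the uniform entry point $r$) of striking a concave arc and returning to the reference line with a velocity whose law is comparable to $\pi$ on $C$. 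Combined with the convex splitting, and with the observation that from \emph{any} $x$ the chain reaches such a good set of directions with probability bounded below within a bounded number of steps --- a trajectory cannot forever avoid the microstructure since each cell opening has fixed width $\ell$, and after enough scatterings the dispersing reflections spread mass over all of $\mathcal X$ with a density bounded below --- this yields a uniform minorization $P^k(x,\cdot)\ge\varepsilon\,\pi(\cdot)$ for some $k\in\mathbb N$, $\varepsilon>0$. Writing then $P^k=\varepsilon\,\Pi+(1-\varepsilon)S$ with $\Pi f=\pi(f)\mathbf 1$ and $S$ a Markov operator with $\pi S=\pi$ (hence $\|S\|_{L^2(\pi)}\le1$), and evaluating on $L^2_0$, gives $\|P^kf\|_\pi\le(1-\varepsilon)\|f\|_\pi$; by self-adjointness $\rho=\|P\|_{L^2_0}=\|P^k\|_{L^2_0}^{1/k}\le(1-\varepsilon)^{1/k}<1$, which plugged into the first paragraph finishes the proof (and en route gives uniform ergodicity via the Doeblin condition).

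I expect the main obstacle to be precisely the uniform lower bound on the dispersing contribution $Q$: one must show that the concave arcs above $h_0$ see a definite fraction of directions with a definite regularizing strength --- including the delicate near-grazing regime --- and that the uncontrolled excursions below $h_0$ cannot conspire to cancel it. This forces genuine quantitative billiard geometry: transversality of the return map to the line $y=h_0$, control of the unstable Jacobian along trajectories confined to the concave arcs, and a lower bound on $\alpha(x)$ over a positive-measure set of directions. The conditioning is merely the device isolating the portion of phase space where these estimates are available; establishing them for an arbitrary symmetric profile below $h_0$ is what occupies Section \ref{sec:Spectral-gap}.
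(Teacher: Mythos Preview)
Your conditioning decomposition $P=\alpha Q+(1-\alpha)R$ is exactly the device the paper uses, and your reduction of the total-variation statement to a spectral gap via self-adjointness is correct. But from that point the paper takes a different route than the Doeblin minorization you propose.

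The paper does \emph{not} try to extract a uniform lower bound $P^k(x,\cdot)\ge\varepsilon\,\pi(\cdot)$. Instead it shows that the dispersing piece $\alpha_j P_j$ is a Hilbert--Schmidt integral operator: Lemma~\ref{lem:kernel-def} writes $P_j$ explicitly with kernel $\omega(x,\phi)$ built from the derivative of the return map on the one-collision set, and Lemma~\ref{lem:kernel-estimates} (via the curvature lower bound $K>0$) shows $\omega\in L^2(\pi_j\otimes\pi_j)$. Compactness of $K:=\alpha_j P_j$ then feeds into a spectral perturbation lemma (Lemma~\ref{lem:gap}, from Weidmann): the essential spectrum of $P=K+T$ is contained in that of $T=\sum_{i\ne j}\alpha_i P_i$, and since $\|T\|\le 1-\inf_x\alpha_j(x)<1$ one gets a positive gap. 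The only quantitative input needed is the $L^2$ bound on $\omega$ and the uniform positivity of $\alpha_j$.

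Your route instead demands a pointwise density lower bound for $Q(x,\cdot)$ on a fixed set, uniformly in $x$, plus an argument that iterates of $P$ spread this over all of $\mathcal X$. The Jacobian estimates you cite from \cite{Feres2007,FZ2012} give $\omega\in L^2$, which is much weaker than a uniform pointwise lower bound; an $L^2$ kernel can vanish on sets of positive measure. The subsequent claim that ``after enough scatterings the dispersing reflections spread mass over all of $\mathcal X$ with a density bounded below'' is where the real work would lie, and the paper never establishes anything of that strength --- nor does it need to. So the obstacle you correctly identify in your last paragraph is not what Section~\ref{sec:Spectral-gap} actually overcomes; the compactness-plus-essential-spectrum argument sidesteps it entirely. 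Your approach would, if completed, yield uniform ergodicity (a Doeblin condition), which is stronger than the $M_\mu$-dependent bound stated; the paper's argument gives only the $L^2$ spectral gap and hence the stated geometric rate.
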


Figure \ref{fig:theorem hypothesis} gives an example of billiard cell for which Theorem \ref{thm:geo_erg} holds.

\subsection{Central Limit and Diffusivity}
Referring back to Figure \ref{fig:idealized experiment}, one expects for a sufficiently long  channel that the molecular random flight 
can be approximated by a Wiener process whose variance corresponds to the Knudsen self-diffusivity. This is justified by a Central Limit Theorem (CLT). This diffusivity has a convenient expression  when the   transition operator $P$ is   self-adjoint.  We
describe this expression here and prove further details later in the paper.

 Let $(X_n)_{n\geq 1}$ be, as above, the stationary Markov chain generated by $P$, with stationary probability measure $\pi$. Recall that $X_n$ has values in the space of post-collision velocities $\mathcal{X}$. This space can be parametrized by the values of the cosine of the angle the velocity vector makes with the horizontal reference line $y=c$. 
 (See Figure \ref{fig:billiard cell}.) Thus we may set $\mathcal{X}=(-1,1)$. Let $f:\mathcal{X}\rightarrow \mathbb{R}$ be the observable 
 $$\tilde f(x)= 2 r x\left(1-x^2\right)^{-1/2} $$
where $r$ is the radius of the channel. We suppose, in the context of formulating a CLT for  molecular trajectories, that the length
of the channel is infinite. Note that $\tilde f(X_n)$ is the distance travelled by the particle along the channel's horizontal axis between the $n$th and the  $n+1$st collisions with the channel wall.
The total horizontal displacement up to the $n$th collision is
$$S_n(\tilde f)=\sum_{k=0}^{n-1} \tilde f(X_k).$$
In its standard form, the CLT gives a limit in distribution for expressions of the form $S_n(f)/\sqrt{n}$ where $f$ is an observable having mean zero and finite variance. A simple calculation shows that the horizontal displacement function $\tilde f$ has mean zero but infinite variance. For this reason we consider instead the following modified, cut-off displacement observable:
\begin{equation}\label{eqn:truncated}f_a(x):= \tilde f(x) \mathbbm{1}_{\{|\tilde f|\leq a\}}(x)  + a \mathbbm{1}_{\{|\tilde f|>a\}}(x)\end{equation}
for large $a>0$. Here $\mathbbm{1}_I(x)$ denotes the indicator
function of the set $I$, which is defined as $\mathbbm{1}(x)=1$ if $x\in I$ and $0$ if $x\notin I$.
There are a number of physical mechanisms  that could be invoked to make this cut-off plausible.  For example, the channel might have
a slight curvature along its length, setting an upper bound on the horizontal distance traveled. See \cite{ACM2003}  for an outline of other mechanisms.
We should also note that while the CLT with the
usual scaling does not hold for the observable $\tilde f$, the distribution of $\tilde f(X_n)$ is still
in the domain of attraction of the Gaussian law. One can check
that $\tilde f$ is slowly varying and, as a result, a CLT
with nonstandard scaling holds for random billiard Markov chains with
sufficient mixing. See \cite{CFZ2016} for a detailed study of such
Markov chains. The program we outline in this paper   to estimate the diffusivity should hold in the
infinite variance case as well, but we have chosen to focus on the
finite variance case for the sake of clarity of exposition. It should
also be noted that for cylindrical  channels in dimension 3 (and higher),
the observable that gives the distance traveled along the axis of
the channel is of finite variance. 

We suppose the microstructure satisfies the same geometric assumptions of Theorem \ref{thm:geo_erg}. In particular, $P$ is self-adjoint and has positive spectral gap. 
  Let $\Pi$ be the {\em spectral resolution} of $P$\----the projection-valued
 measure on the spectrum $\sigma(P)\subset [-1,1]$ granted by the Spectral Theorem for bounded self-adjoint operators.  Then
 $$ P=\int_{-1}^1 \lambda\, \Pi(d\lambda).$$
Let $f$ be any observable in $L_0^2(\mathcal{X}, \pi)$ (for example,  the truncated displacement function $f_a$) and define the measure
$\Pi_f$ supported on $\sigma(P)\setminus \{1\}$ by $$\Pi_f(d\lambda):=\langle f, \Pi(d\lambda)f\rangle_\pi.$$

The following is a special case of a theorem that will  be stated and proved in Section \ref{sec:Diffusivity}.
 
\begin{thm}\label{thm:clt}
Let $(X_n)_{n\geq 0}$ be a Markov chain taking values in $\mathcal{X}$ with Markov transition operator $P$ and stationary measure $\pi$. 
Suppose $P$ is associated to a billiard cell satisfying the same geometric assumptions of Theorem \ref{thm:geo_erg}.
Let $f\in L^2_0(\mathcal{X},\pi)$. Then $S_n(f)/\sqrt{n}$ converges in distribution  to a centered Gaussian random variable $\mathcal{N}(0, \sigma_f^2)$, where the variance is given by
$$\sigma_f^2 = \int_{-1}^1 \frac{1+\lambda}{1-\lambda} \Pi_f(d\lambda) = \langle f, f\rangle_\pi + 2\langle f, P(I-P)^{-1}f\rangle_\pi. $$  
\end{thm}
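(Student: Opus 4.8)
\section*{Proof proposal}

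The plan is to prove the central limit statement by Gordin's martingale-approximation method --- the Kipnis--Varadhan argument specialized to reversible chains (cf.\ \cite{KV1986}) --- with the positive spectral gap from Theorem~\ref{thm:geo_erg} supplying the one nontrivial analytic ingredient. I take the chain started from $\pi$, so that $(X_n)_{n\ge0}$ is stationary; the general case of an initial law $\mu$ with $\|\mu\|_\pi<\infty$ then follows from a standard argument using the geometric ergodicity in Theorem~\ref{thm:geo_erg}. Since $f\in L^2_0(\mathcal X,\pi)$ and, by that same hypothesis, $P$ restricted to $L^2_0(\mathcal X,\pi)$ has operator norm $\rho<1$, the operator $I-P$ is boundedly invertible on $L^2_0(\mathcal X,\pi)$ with $(I-P)^{-1}=\sum_{n\ge0}P^n$. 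Hence the corrector
\[
g:=(I-P)^{-1}f=\sum_{n\ge0}P^nf
\]
is a well-defined element of $L^2_0(\mathcal X,\pi)$ solving the Poisson equation $g-Pg=f$.

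Next I would carry out the telescoping decomposition. Set $M_{k+1}:=g(X_{k+1})-(Pg)(X_k)$ for $k\ge0$; the Markov property gives $\E[g(X_{k+1})\mid\mathcal F_k]=(Pg)(X_k)$, so $(M_k)_{k\ge1}$ is a stationary, ergodic sequence of square-integrable martingale differences with respect to $\mathcal F_k:=\sigma(X_0,\dots,X_k)$ --- stationarity and ergodicity being inherited from the chain, whose ergodicity follows from the spectral gap. From $f=g-Pg$ one obtains the identity
\[
S_n(f)=\sum_{k=0}^{n-1}f(X_k)=\sum_{k=1}^{n}M_k+\bigl(g(X_0)-g(X_n)\bigr).
\]
Because $\E_\pi\bigl[(g(X_0)-g(X_n))^2\bigr]\le 4\|g\|_\pi^2$ uniformly in $n$, the boundary term divided by $\sqrt n$ tends to $0$ in $L^2$, hence in probability. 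Applying the classical martingale central limit theorem for stationary ergodic martingale-difference sequences to $n^{-1/2}\sum_{k=1}^n M_k$ and then Slutsky's lemma, one concludes that $S_n(f)/\sqrt n$ converges in distribution to $\mathcal N(0,\sigma_f^2)$ with $\sigma_f^2=\E_\pi[M_1^2]<\infty$.

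It remains to identify the variance. Using stationarity, self-adjointness of $P$, and $\E_\pi[g(X_1)\mid\mathcal F_0]=(Pg)(X_0)$, a direct computation gives
\[
\E_\pi[M_1^2]=\|g\|_\pi^2-\|Pg\|_\pi^2=\langle g,(I-P^2)g\rangle_\pi=\langle(I+P)g,(I-P)g\rangle_\pi=\langle(I+P)g,f\rangle_\pi.
\]
Substituting $g=(I-P)^{-1}f$ and writing $(I+P)(I-P)^{-1}=I+2P(I-P)^{-1}$ yields the second expression $\sigma_f^2=\langle f,f\rangle_\pi+2\langle f,P(I-P)^{-1}f\rangle_\pi$. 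For the spectral form, apply the functional calculus of the bounded self-adjoint operator $P$ to $\varphi(\lambda):=(1+\lambda)/(1-\lambda)$, which is continuous and bounded on $\sigma\bigl(P|_{L^2_0}\bigr)\subset[-\rho,\rho]$: since $f\in L^2_0(\mathcal X,\pi)$, the spectral measure $\Pi_f$ is carried by $\sigma(P)\setminus\{1\}$, and
\[
\sigma_f^2=\langle f,\varphi(P)f\rangle_\pi=\int_{-1}^1\frac{1+\lambda}{1-\lambda}\,\langle f,\Pi(d\lambda)f\rangle_\pi=\int_{-1}^1\frac{1+\lambda}{1-\lambda}\,\Pi_f(d\lambda),
\]
the integral being finite since $\varphi(\lambda)\le(1+\rho)/(1-\rho)$ on the support of $\Pi_f$.

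The only genuinely substantive step is the existence of the corrector $g$ in $L^2_0(\mathcal X,\pi)$: this is exactly where the positive spectral gap of Theorem~\ref{thm:geo_erg} is indispensable, and without it $(I-P)^{-1}$ need not be bounded and $\sigma_f^2$ could be infinite. The remaining points --- the telescoping identity, the negligibility of the boundary term, the invocation of the martingale CLT, and the spectral bookkeeping --- are routine. One small technical point worth checking is that the eigenvalue $1$ of $P$ is simple with eigenspace equal to the constants, so that $\Pi_f(\{1\})=0$ for $f\in L^2_0(\mathcal X,\pi)$ and the pole of $\varphi$ is never reached; this too follows from the spectral gap together with self-adjointness.
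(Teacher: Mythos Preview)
Your proposal is correct and follows essentially the same Kipnis--Varadhan route as the paper. The only difference is one of presentation: the paper invokes \cite{KV1986} as a black box (its Theorem~\ref{thm:KV1986}) and then merely verifies, via the spectral representation, that the spectral gap from Theorem~\ref{thm:geo_erg} forces $\sigma_f^2\le(1+\rho)(1-\rho)^{-1}\|f\|_\pi^2<\infty$ (its Corollary~\ref{cor:KV1986-RR1997}), whereas you spell out the martingale-approximation argument and the variance identification explicitly.
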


The expression for the diffusivity given above
 suggests the following approach for computing $\sigma_{f}^{2}$. 
 Let $L:=P-I$ be the \emph{Markov Laplacian} and $g$ the solution to the 
  \emph{Markov-Poisson equation $Lg=-f$}. Then 
  the dimensionless Knudsen self-diffusivity coefficient takes the form
\begin{equation}\label{eqn:eta} \eta=\frac{\sigma^2_f}{\sigma_0^2} = 1 +2 \|f\|_\pi^{-2} \left\langle f, Pg\right\rangle_\pi,\end{equation}
 where $\sigma_0^2=\|f\|_\pi^2$ is the diffusivity for the  process with independent post-collision velocities with  the identical distribution $\pi$. 
 In the next subsection we explain one approach to carrying out this program by approximating $L$ by an elliptic differential operator
 $\mathcal{L}$ whose spectral theory is well
understood.
  It turns out that  $\mathcal{L}$  has a canonical form as we show next.

\subsection{The Legendre Equation and Diffusion Approximation}\label{subsec:diff-approx}
Our aim now is to show that it is possible to approximate the solution
of the Markov-Poisson equation $Lg=-f$ for a large class of random
billiard microstructures when  $P$ is close to the identity operator $I$. We consider families of microstructures
indexed by a scalar quantity $h$ that, in a sense to be made precise,
characterizes a key geometric feature of the microscopic billiard
cell, namely its flatness. For each microstructure with parameter
$h$, the corresponding Markov operator $P_{h}$ defines the dynamics
of the random billiard Markov chain as discussed previously. The key
idea now is that for small values $h$, the operator $P_{h}$ will
act nearly like the identity operator, due to the flatness of the
geometry; the  Markov-Laplace operator $L_{h}:=P_{h}-I$,
in the limit as $h\to0$ and under some general assumptions
on the microscopic billiard cell,   will then have  a canonical approximation   by
the classical Legendre differential operator, whose spectral theory is well understood.
In the rest of the subsection, we make explicit the necessary assumptions
on the geometry and give the statement  of our operator approximation
result and provide examples. 

Let the boundary of the  billiard cell be   the graph
of a periodic function $F:\mathbb{T}\to\mathbb{R}.$ (See Figure \ref{fig:billiard cell}.)
In order to characterize how flat the microstructure boundary is,
we consider the normal vector field $\mathbbm n:\mathbb{T}\to\mathbb{R}^{2}$
along the graph of $F$, and let $\bar{\mathbbm n} = \bar{\mathbbm n}(r)$ denote its projection
onto its first (horizontal) component. Finally, we let
\begin{equation}  
h:=\int_{\mathbb{T}}\bar{\mathbbm n}^{2}\,dr = \int_{\mathbb T}\frac{F'(r)^{2}}{1+F'(r)^{2}}\,dr.\label{eq:flat}
\end{equation}
It will be seen in examples that   $h$ captures information
about the curvature of the boundary. For small values of $h$, the collision events
with the boundary will be relatively simple, often resulting in only
a single collision with the cell's boundary and only  
a small deviation from specular reflection. This implies little change in the tangential momentum of the particle with high probability. It is in this sense that $h$ 
can be thought to have a  role similar to the accommodation coefficient $\vartheta$ referred to earlier in the paper.

 Let $\mathcal X = (-1,1)$ and let $\mathcal{L}$ denote the differential operator acting on smooth functions
$f:\mathcal{X}\to\mathbb{R}$ as
\begin{equation}
\mathcal{L}f(x)=\frac{d}{dx}\left(\left(1-x^{2}\right)\frac{d}{dx}f(x)\right).\label{eq:legendre}
\end{equation}

\begin{thm}
\label{thm:diffusion-app}Let $(F_{h})_{h>0}$ be a
family of piecewise smooth functions $F_{h}:\mathbb{T}\to\mathbb{R}$ defining bilaterally symmetric billiard cells, indexed by the
flatness parameter $h$   introduced in (\ref{eq:flat}). 
Let $(P_{h})_{h>0}$ be the corresponding Markov transition operators.
Then for any $f\in C^{3}(\mathcal{X})$,   
\[
L_{h}f(x)=2h\mathcal{L}f(x)+O\left(h^{3/2}\right)
\]
holds   for each $v$ such that every initial condition with velocity $v$ results in a trajectory that collides only once with the boundary of the cell.
\end{thm}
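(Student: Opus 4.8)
The plan is to reduce the statement to the kinematics of a single specular reflection together with a short moment computation. Fix an incoming velocity $x\in\mathcal{X}=(-1,1)$ with the stated property, so that every trajectory entering with horizontal velocity component $x$ collides exactly once with $\partial M$; write $c:=\sqrt{1-x^{2}}$ and take the incoming unit velocity to be $v=(x,-c)$. Such a particle, entering at $(r,c)$ on the reference line, strikes the graph of $F$ at $(r^{\ast},F(r^{\ast}))$ with $r^{\ast}=r^{\ast}(r,x)\in\mathbb{T}$, where the inward unit normal is $\mathbbm n=(\bar{\mathbbm n},\sqrt{1-\bar{\mathbbm n}^{2}})$ with $\bar{\mathbbm n}=\bar{\mathbbm n}(r^{\ast})=-F'(r^{\ast})/\sqrt{1+F'(r^{\ast})^{2}}$. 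A single specular reflection gives the return velocity $X(r,x)=x-2(v\cdot\mathbbm n)\,\bar{\mathbbm n}$, hence the increment
\[
\Delta x:=X(r,x)-x=-2\bar{\mathbbm n}\bigl(x\bar{\mathbbm n}-c\sqrt{1-\bar{\mathbbm n}^{2}}\bigr)=2c\,\bar{\mathbbm n}\sqrt{1-\bar{\mathbbm n}^{2}}-2x\,\bar{\mathbbm n}^{2}.
\]

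Next I would change variables from the uniformly distributed entry point $r$ to the collision point $r^{\ast}$. Conservation of the transverse flux of the parallel beam of trajectories, $|v_{2}|\,dr=|v\cdot\mathbbm n|\,ds$ with $ds=\sqrt{1+F'(r^{\ast})^{2}}\,dr^{\ast}$, simplifies to $c\,dr=(c+xF'(r^{\ast}))\,dr^{\ast}$, so that when $r$ is uniform on $\mathbb{T}$ the law of $r^{\ast}$ has density
\[
w(r^{\ast})=1+\frac{x\,F'(r^{\ast})}{c}=1-\frac{x}{c}\cdot\frac{\bar{\mathbbm n}}{\sqrt{1-\bar{\mathbbm n}^{2}}},
\]
which integrates to $1$ by periodicity of $F$ and is a genuine probability density once $h$ is small enough that $r\mapsto r^{\ast}$ is a diffeomorphism of $\mathbb{T}$ (the single-collision hypothesis rules out shadowing and multivaluedness). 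Then $L_{h}f(x)=\mathbb{E}[f(x+\Delta x)]-f(x)$, and a third-order Taylor expansion of $f$ about $x$ gives
\[
L_{h}f(x)=f'(x)\,\mathbb{E}[\Delta x]+\tfrac{1}{2}f''(x)\,\mathbb{E}[(\Delta x)^{2}]+R,\qquad |R|\le\tfrac{1}{6}\|f'''\|_{\infty}\,\mathbb{E}\bigl[|\Delta x|^{3}\bigr],
\]
all expectations taken against $w(r^{\ast})\,dr^{\ast}$.

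The heart of the argument is the evaluation of the first two moments. I would expand $\Delta x\cdot w$ and $(\Delta x)^{2}w$ as power series in $\bar{\mathbbm n}$ (note that $\sqrt{1-\bar{\mathbbm n}^{2}}$ and $(1-\bar{\mathbbm n}^{2})^{-1/2}$ are even in $\bar{\mathbbm n}$), so each term is a monomial in $\bar{\mathbbm n}$ times a function of $x$ alone. Here bilateral symmetry enters decisively: $F(r)=F(\ell-r)$ forces $\bar{\mathbbm n}(\ell-r)=-\bar{\mathbbm n}(r)$ while $r\mapsto\ell-r$ preserves Lebesgue measure on $\mathbb{T}$, so every \emph{odd} power of $\bar{\mathbbm n}$ integrates to $0$. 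A short computation then yields the exact identity $\mathbb{E}[\Delta x]=-4xh$ (all odd contributions cancel) together with $\mathbb{E}[(\Delta x)^{2}]=4(1-x^{2})h+O\bigl(\int_{\mathbb{T}}\bar{\mathbbm n}^{4}\,dr\bigr)$. Since $|\Delta x|\le4|\bar{\mathbbm n}|$, I get $\mathbb{E}\bigl[|\Delta x|^{3}\bigr]\lesssim\int_{\mathbb{T}}|\bar{\mathbbm n}|^{3}\,dr\le\|\bar{\mathbbm n}\|_{\infty}\int_{\mathbb{T}}\bar{\mathbbm n}^{2}\,dr=\|\bar{\mathbbm n}\|_{\infty}h$ and $\int_{\mathbb{T}}\bar{\mathbbm n}^{4}\,dr\le\|\bar{\mathbbm n}\|_{\infty}^{2}h$; under the scaling $\|F_{h}'\|_{\infty}=O(h^{1/2})$ built into such a family (the normalization that makes the limit nontrivial), both error contributions are $O(h^{3/2})$. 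Substituting,
\begin{align*}
L_{h}f(x)&=f'(x)(-4xh)+\tfrac{1}{2}f''(x)\bigl(4(1-x^{2})h\bigr)+O(h^{3/2})\\
&=2h\bigl((1-x^{2})f''(x)-2xf'(x)\bigr)+O(h^{3/2})=2h\,\mathcal{L}f(x)+O(h^{3/2}).
\end{align*}

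I expect the main obstacles to be the two technical points around the change of variables. First, one must show that $r\mapsto r^{\ast}$ is a diffeomorphism of $\mathbb{T}$ with $w>0$: the single-collision hypothesis removes shadowing and multivaluedness, but one still needs $h$ (hence the boundary slopes) small and $x$ bounded away from $\pm1$ so that $c+xF'(r^{\ast})>0$, so the error constants depend on $x$. Second---and this is where the precise meaning of \emph{family indexed by $h$} matters---one needs the moment bounds $\int_{\mathbb{T}}\bar{\mathbbm n}^{k}\,dr=O(h^{k/2})$ for $k=3,4$, which are not implied by $\int_{\mathbb{T}}\bar{\mathbbm n}^{2}\,dr=h$ alone but do follow from uniform flattening $\|F_{h}'\|_{\infty}=O(h^{1/2})$. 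Everything else is bookkeeping; the symmetry-driven cancellation of all odd powers of $\bar{\mathbbm n}$ is robust and is exactly what forces the leading term to be a constant multiple of the Legendre operator.
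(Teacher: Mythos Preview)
Your argument is correct and is essentially the same as the paper's: single-reflection kinematics, change of variables from the entry point $r$ to the collision point $r'$ with Jacobian $1+xF'(r')/c$, Taylor expansion of $f$, and the bilateral-symmetry cancellation $\bar{\mathbbm n}(\ell-r)=-\bar{\mathbbm n}(r)$ to kill odd powers of $\bar{\mathbbm n}$. The only cosmetic difference is ordering: the paper first symmetrizes the integrand explicitly (averaging the contributions at $r'$ and $\ell-r'$) and then expands, whereas you expand first and compute the moments $\mathbb{E}[\Delta x]$ and $\mathbb{E}[(\Delta x)^{2}]$ directly, invoking symmetry to drop the odd terms. Your moment computation has the pleasant feature of showing $\mathbb{E}[\Delta x]=-4xh$ exactly, with no higher-order even corrections in that term. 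You are also right to flag that the $O(h^{3/2})$ error requires $\int_{\mathbb{T}}|\bar{\mathbbm n}|^{3}\,dr=O(h^{3/2})$ and $\int_{\mathbb{T}}\bar{\mathbbm n}^{4}\,dr=O(h^{2})$, which are not consequences of $\int_{\mathbb{T}}\bar{\mathbbm n}^{2}\,dr=h$ alone; the paper's proof bounds the remainder by $I_{3}=\int_{\mathbb{T}}\bar{\mathbbm n}^{3}\,dr'$ and labels the $\bar{\mathbbm n}^{4}$ contribution $O(h^{2})$ without comment, so the uniform-flattening hypothesis $\|F_{h}'\|_{\infty}=O(h^{1/2})$ that you isolate is indeed the implicit assumption driving the stated error rate.
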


In the context of Theorem \ref{thm:diffusion-app} we observe that, for each $x \in \mathcal X$, every initial condition with velocity $x$ results in a trajectory 
that collides only once  cell boundary as long as we take
 $h$ to be sufficiently small.

The differential operator $\mathcal{L}$ has a well understood spectral
theory that will be used to obtain information about
$P_{h}$. We recall that the eigenvalue problem $\mathcal{L}f=\lambda f$ has square integrable solutions  
if and only if $\lambda$ is of the form $\lambda=-l(l+1)$ for integers
$l\geq0$. 
The associated eigenfunctions are the Legendre polynomials $\phi_{l}$,
$l\geq0$, 
 $$\phi_{0}=1,\  \  \phi_{1}(x)=x,\ \ \phi_{2}(x)=(3x^{2}-1)/2, \dots.$$
  The collection $(\phi_{l})_{l\geq0}$  forms
a complete orthogonal basis for $L^{2}(\mathcal X, \pi)$ and
\[
\langle\phi_{n},\phi_{m}\rangle_\pi =\int_{\mathcal X}\phi_{n}(x)\phi_{m}(x)\,\pi(dx)=\frac{1}{2n+1}\delta_{n,m},
\]
where $\delta_{n,m}$ is the Kronecker delta symbol.

 As a
first application of the approximation given in Theorem \ref{thm:diffusion-app},   
we give an informal estimation of the spectral gap $\gamma_h$ of $P_{h}$
for values of $h$ near 0. 
Note that the largest eigenvalue of $P_{h}$
is 1, with eigenfunctions given by the constant functions. So   $\gamma_h$ is given by $1-\lambda$ where $\lambda$
is the second largest eigenvalue of $P_{h}$. Using the approximation
in Theorem \ref{thm:diffusion-app},
\[
P_{h}\phi_{l}=\left(1-2hl(l+1)\right)\phi_{l}+O(h^{3/2}),
\]
where $\phi_{l}$ is the Legendre polynomial associated to eigenvalue
$-l(l+1)$. This suggests that the second largest eigenvalue $\lambda$
of $P_{h}$ is given by
$
\lambda\approx1-4h
$. Equivalently, this suggests the following asymptotic  estimate of $\gamma_h$: 
\begin{equation}\label{eq:gap}\gamma_h\approx  4h. \end{equation}

  The idea then will be to use the  
approximation $\mathcal{L}$ of the Markov-Laplacian $L$ in order
to give an approximation of the function $g=(I-P)^{-1}f$ that appears
in the equation $$\sigma_f^2= \left\langle f, f\rangle_\pi + 2\langle f, P(I-P)^{-1}f\right\rangle_\pi$$
obtained in Theorem \ref{thm:clt}.
 Note that $g$ is a solution
of the Markov-Poisson equation $Lg=-f$. The following thorem  shows that a series
solution of the  Poisson equation  for $\mathcal{L}$ can be given explicitly
in terms of Legendre polynomials.

\begin{thm}\label{thm:sigma_approx}
Let $(P_{h})_{h>0}$ be a family of random billiard Markov transition
operators for a family of   billiard cells satisfying the geometric assumptions of Theorems \ref{thm:clt} and \ref{thm:diffusion-app}. For any function $f\in L_{0}^{2}(\mathcal{X},\pi)$,
let $\sigma_{f,h}^{2}$ denote the diffusivity corresponding to $P_{h}$.
Then
\begin{equation}
\sigma_{f,h}^{2}=-\langle f,f\rangle_\pi+\frac{1}{h}\sum_{l=1}^{\infty}\frac{2l+1}{l(l+1)}\left\langle \phi_{l},f\right\rangle_\pi^{2}+O(h^{1/2}).\label{eq:variance-h-approx}
\end{equation}
\end{thm}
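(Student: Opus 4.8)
The plan is to substitute the Legendre approximation of $L_h$ from Theorem \ref{thm:diffusion-app} into the spectral formula for $\sigma_{f,h}^2$ from Theorem \ref{thm:clt} and track the error. Expand the observable in the Legendre basis: write $f=\sum_{l\geq1}c_l\phi_l$ with $c_l=(2l+1)\langle\phi_l,f\rangle_\pi$, so that $\langle\phi_l,f\rangle_\pi^2=c_l^2/(2l+1)^2$ and $\langle f,f\rangle_\pi=\sum_{l\geq1}c_l^2/(2l+1)$. First I would solve the Legendre–Poisson equation $\mathcal Lg_0=-f$ term by term: since $\mathcal L\phi_l=-l(l+1)\phi_l$, one gets $g_0=\sum_{l\geq1}\frac{c_l}{l(l+1)}\phi_l$, which converges in $L^2(\mathcal X,\pi)$ because $1/(l(l+1))$ is bounded. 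By Theorem \ref{thm:diffusion-app}, $L_h=2h\mathcal L+O(h^{3/2})$ on $C^3(\mathcal X)$, so the solution $g_h$ of $L_hg_h=-f$ should satisfy $g_h=\tfrac{1}{2h}g_0+(\text{lower order})$; substituting into $\sigma_{f,h}^2=\langle f,f\rangle_\pi+2\langle f,P_hg_h\rangle_\pi$ and using $P_h=I+L_h$ gives $\langle f,P_hg_h\rangle_\pi=\langle f,g_h\rangle_\pi+\langle f,L_hg_h\rangle_\pi=\langle f,g_h\rangle_\pi-\langle f,f\rangle_\pi$, hence $\sigma_{f,h}^2=-\langle f,f\rangle_\pi+2\langle f,g_h\rangle_\pi$. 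Then $2\langle f,g_h\rangle_\pi\approx\tfrac{1}{h}\langle f,g_0\rangle_\pi=\tfrac{1}{h}\sum_{l\geq1}\frac{c_l^2}{(2l+1)l(l+1)}=\tfrac{1}{h}\sum_{l\geq1}\frac{2l+1}{l(l+1)}\langle\phi_l,f\rangle_\pi^2$, which is exactly the claimed main term.

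The nontrivial work is controlling the error, i.e.\ bounding $\langle f, g_h\rangle_\pi - \tfrac{1}{2h}\langle f, g_0\rangle_\pi$ by $O(h^{1/2})$. I would proceed by a perturbation/resolvent argument: write $L_h=2h\mathcal L+R_h$ with $R_h$ the remainder, and note $g_h=-(L_h)^{-1}f$ on $L_0^2$ (the inverse exists since $P_h$ has spectral gap by Theorem \ref{thm:geo_erg}). Then $g_h-\tfrac1{2h}g_0=-(L_h^{-1}-\tfrac{1}{2h}\mathcal L^{-1})f=\tfrac{1}{2h}L_h^{-1}R_h\mathcal L^{-1}f$, so one needs $\|L_h^{-1}R_h g_0\|_\pi=O(h^{3/2})$. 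The spectral-gap bound $\gamma_h\sim 4h$ gives $\|L_h^{-1}\|=O(1/h)$ on $L_0^2$, while Theorem \ref{thm:diffusion-app} gives $\|R_h g_0\|_\pi=O(h^{3/2})$ provided $g_0\in C^3$ and the single-collision hypothesis holds on the full velocity range for $h$ small (as noted right after Theorem \ref{thm:diffusion-app}); multiplying yields $O(h^{1/2})$ for $g_h-\tfrac1{2h}g_0$, hence for $\langle f,g_h\rangle_\pi-\tfrac1{2h}\langle f,g_0\rangle_\pi$, which is the asserted error.

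The main obstacle is that this clean argument requires $g_0$ (and $\mathcal L^{-1}f$ generally) to be smooth enough—$C^3$—for Theorem \ref{thm:diffusion-app} to apply to $R_hg_0$, and that the pointwise $O(h^{3/2})$ error in Theorem \ref{thm:diffusion-app} be upgraded to an $L^2(\mathcal X,\pi)$ bound uniform over the velocity variable; for a general $f\in L_0^2$ the series for $g_0$ need only converge in $L^2$, not in $C^3$. I expect the resolution to be a truncation argument: split $f=f_{\leq N}+f_{>N}$ where $f_{\leq N}=\sum_{l\leq N}c_l\phi_l$ is a polynomial (hence smooth, so Theorem \ref{thm:diffusion-app} applies with a constant depending on $N$), handle $f_{\leq N}$ by the perturbation argument above, and control the tail $f_{>N}$ directly using the spectral formula $\sigma_f^2=\int\frac{1+\lambda}{1-\lambda}\Pi_f(d\lambda)$ together with the spectral-gap bound, which forces the tail contribution to be small uniformly in $h$ once $N$ is large; then optimize $N=N(h)$ to absorb everything into $O(h^{1/2})$. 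Verifying that the two error sources can be balanced in this way, and that the constant in Theorem \ref{thm:diffusion-app} degrades controllably in $N$, is where the real care is needed; the rest is the routine bookkeeping sketched above.
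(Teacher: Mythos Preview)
Your overall strategy matches the paper's: both construct the Legendre--Poisson solution $\tfrac{1}{2h}g_0$ (which the paper denotes $g_h$), use Theorem~\ref{thm:diffusion-app} to see that it approximately solves the Markov--Poisson equation, and feed this into $\sigma_{f,h}^2=\langle f,f\rangle_\pi+2\langle P_hf,(I-P_h)^{-1}f\rangle_\pi$. The paper's argument is in fact shorter than yours: it computes $\langle P_hf,\tfrac{1}{2h}g_0\rangle_\pi=\langle f,P_h(\tfrac{1}{2h}g_0)\rangle_\pi$ directly via $P_h=I+2h\mathcal L+O(h^{3/2})$, obtains the right-hand side of \eqref{eq:variance-h-approx}, and then invokes \eqref{eq:variance-Poisson} without ever isolating or bounding the difference between $\tfrac{1}{2h}g_0$ and the true $(I-P_h)^{-1}f$.

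Your more explicit resolvent bookkeeping, however, contains a slip that exposes a genuine difficulty. You correctly derive $g_h-\tfrac{1}{2h}g_0=\tfrac{1}{2h}L_h^{-1}R_h\mathcal L^{-1}f$ and correctly state that you need $\|L_h^{-1}R_hg_0\|_\pi=O(h^{3/2})$. But the inputs you quote, $\|L_h^{-1}\|=O(1/h)$ and $\|R_hg_0\|_\pi=O(h^{3/2})$, multiply to $O(h^{1/2})$, not $O(h^{3/2})$; after the prefactor $\tfrac{1}{2h}$ this yields $\|g_h-\tfrac{1}{2h}g_0\|_\pi=O(h^{-1/2})$, which diverges. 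The crude operator-norm bound $\|L_h^{-1}\|\sim 1/\gamma_h\sim 1/(4h)$ is too lossy by exactly one power of $h$, and your proposed truncation in $N$ targets a different issue (the $C^3$ regularity of $g_0$) and does not recover this lost factor. The paper's proof sidesteps rather than resolves this point---it simply substitutes the Legendre proxy into \eqref{eq:variance-Poisson} and asserts that the result follows---so your more careful accounting has actually surfaced a step in the error control that the paper leaves implicit.
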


\begin{rmk}
It should be noted that for the sake of numerical computations, it is natural to consider the quantity given by truncating the series in (\ref{eq:variance-h-approx}) after a fixed number of terms $n \geq 1$, so that
$$\sigma^2_{f,h} = -\langle f, f \rangle_\pi + \frac 1h \sum_{l=1}^n \frac{2l+1}{l(l+1)} \langle \phi_l, f \rangle_\pi^2 + E_{h,n},$$
where $E_{h,n}$ is the tail of the series along with the $O(h^{1/2})$ error term. This quantity can be estimated as follows:
$$E_{h,n} = \frac 1h \sum_{l=n+1}^\infty \frac{2l+1}{l(l+1)}\langle \phi_l, f \rangle_\pi^2 +O(h^{1/2}) \leq \frac{\|f\|^2_\pi}{h} \sum_{l=n+1}^\infty \frac{2l+1}{l(l+1)}\|\phi_l\|_\pi^2 + O(h^{1/2})= \frac{\|f\|_\pi^2}{h(n+1)} + O(h^{1/2}).$$
\end{rmk}

The theorem implies that the dimensionless self-diffusivity coefficient satsifies
$$\eta_f = -1 +  \frac{1}{h}\sum_{l=1}^{\infty}\frac{2l+1}{l(l+1)}\left\langle \phi_{l},f/\|f\|_\pi\right\rangle_\pi^{2}+O(h^{1/2}) = -1 + \frac1h C_f + O\left(h^{1/2}\right),$$
where $C_f$ is defined by this identity. Thus, for small $h$,
\begin{equation}\label{eq:approx} \eta_f \approx \frac{C_f -h}{h}.\end{equation}
Then the approximate identity  (\ref{eq:gap}) 
suggests
\begin{equation}\label{eq:approx_gamma} \eta_f \approx \frac{4C_f -\gamma}{\gamma}.\end{equation}
It is interesting to compare this expression with the one obtained under the Maxwell-Smoluchowski model:
$$\eta = \frac{2-\vartheta}{\vartheta}$$
where $\vartheta$ is the accommodation coefficient, defined as the fraction of diffuse collisions. 
We thus obtain a conceptual relation linking the purely geometric quantity $h$ (flatness), the spectral quantity $\gamma$ (spectral gap), and
the tangential momentum accommodation coefficient $\vartheta$ defined for a standard and widely used collision model.
Finally, it
is worth comparing these expressions with the exact equation
$$ \eta_f = \int_{-1}^1 \frac{2-\vartheta}{\vartheta} \overline{\Pi}_f(d\vartheta)$$
where $\overline{\Pi}_f(d\vartheta)=\Pi_f(d\vartheta)/\|f\|^2_\pi$, which is obtained from
Theorem \ref{thm:clt}  by setting $\vartheta=1-\lambda$.

\subsection{Two Examples}
Consider the microscopic billiard cell, which we will refer to as
the {\em small bumps} microstructure throughout the discussion, whose boundary
is given by arcs of circles as in Figure \ref{fig:bumps}. The
geometric parameter of interest here is the dimensionless curvature
given by $K=\ell/R$, where $R$ is the radius of one of the arcs and $\ell$ is the length of the opening to the billiard cell as shown in the figure. 
An elementary computation using (\ref{eq:flat}) gives
\[
h=\frac{K^{2}}{12}.
\]
As a result, the spectral gap, approximated for values of $K$ near
zero, is given by $$1-\lambda\approx4h=K^{2}/3.$$  
Figure \ref{fig:bumps-spectral-gap} shows the numerically obtained values for the spectral gap and $\eta$ compared to the respective approximations as functions of the dimensionless curvature parameter $K$.

\begin{figure}[h]
\begin{centering}
\includegraphics[width=0.4\columnwidth]{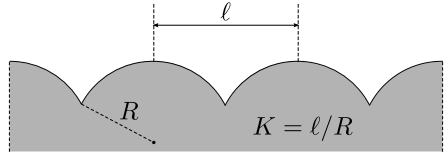}
\par\end{centering}
\caption{The {\em bumps} microstructure with dimensionless curvature parameter $K$.}
\label{fig:bumps}
\end{figure}

\begin{figure}[h]
\begin{centering}
\includegraphics[width=1.0\columnwidth]{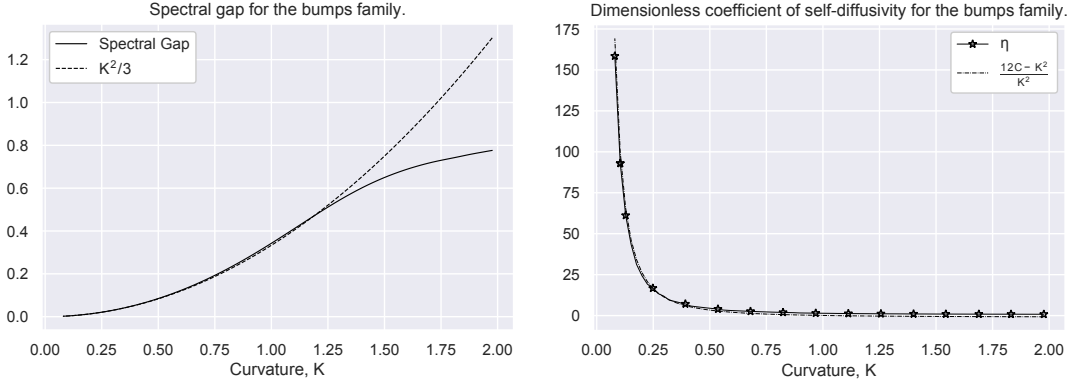}
\par\end{centering}
\caption{Left: the spectral gap of the operator $P$ for the bumps  family of microstructures depicted in Figure \ref{fig:bumps},
with dimensionless curvature parameter $K$, compared with the
approximation of the Markov-Laplacian by the Legendre differential operator. The solid
curve is constructed from the numerical approximation detailed in
Section \ref{sec:Numerical-techniques-examples}. Right: comparison of the dimensionless diffusivity coefficient $\eta$ obtained using  (\ref{eqn:eta})
and a finite dimensional approximation of $P$ (indicated on the graph by the stars) and the approximation of $\eta$ as a  function of the geometric parameter
given by (\ref{eq:approx}). The observable is $f_a$ with cut-off $a=50000$. 
\label{fig:bumps-spectral-gap}}
\end{figure}

A similar computation can be done for the microgeometry in Figure
\ref{fig:bumps alpha} that consists of a mixture of the small bumps
geometry together with flat, specularly reflecting lines. In this
case, the family is parameterized by the proportion of initial positions
$\alpha$ that result in reflections with the part of the boundary
with curvature. After expressing the boundary as the graph of an appropriately
defined function and computing an elementary integral, we get that
$h=\alpha/3$. 
 
 Generalizing this second example,  consider  the transition operator $$P_\alpha = \alpha P_1+(1-\alpha)I $$ 
 where $P_1$ is the operator associated to a given microstructure. 
 Then $P_\alpha$ is associated to the microstructure for which a   segment of horizontal line of length $d$ is added to the billiard cell of the first microstructure.
 The parameter $\alpha$ is then the probability that an incoming particle will not collide with the flat segment. It is easy to see the effect of the additional parameter $\alpha$. Note that $P_\alpha-I = \alpha(P_1-I)$. An elementary algebraic manipulation starting from the expression
 $$ \sigma_{f,\alpha}^{2} =\left\langle f,f\right\rangle_\pi +2\left\langle P_{\alpha}f,(I-P_{\alpha})^{-1}f\right\rangle_\pi$$
gives
$$\eta_{f,\alpha} = \eta_{f,1} + \frac{2(1-\alpha)}{\alpha} \frac{\langle f, (I-P_1)^{-1}f\rangle_\pi}{\|f\|_\pi^2}$$
where $f$ is  arbitrary. As it is to be expected, $\eta_{f,\alpha}$ approaches infinity as the probability of specular reflection increases to $1$.

\begin{figure}[h]
\begin{centering}
\includegraphics[width=0.5\columnwidth]{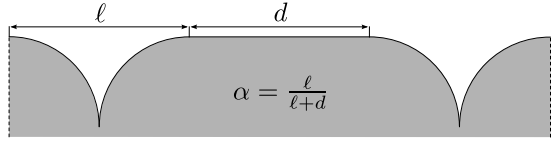}
\par\end{centering}
\caption{Adding a flat segment to a given microstructure, as indicated in this diagram, gives the transition operator $P_\alpha=\alpha P_1+(1-\alpha)I$,
where $P_1$ is the operator associated to the original microstructure. }
\label{fig:bumps alpha}
\end{figure}

\subsection{Summary of the numerical techniques and examples \label{sec:Numerical-techniques-examples}}

In equation (\ref{eq:variance-h-approx}) of Theorem \ref{thm:sigma_approx}, we have given our main numerical approach of the paper with respect to analyzing the regime of small flatness parameter $h$; namely, we estimate the dimensionless self-diffusivity $\eta = \eta_f$ by truncating the series in equation (\ref{eq:variance-h-approx}).
In this subsection we outline an additional numerical approach for computing the dimensionless self-diffusivity $\eta$ (or, equivalently, the variance $\sigma_f^2$ of the Gaussian limit of the random flight in a channel).  
This method, which we will refer to as the Galerkin method, requires us to introduce a finite rank approximation,
or discretization, of the Markov operator $P$, which we describe below.
The purpose of introducing this additional approach is two-fold. 
First, the Galerkin method serves as numerical verification of the main approach of using equation  (\ref{eq:variance-h-approx}). 
Additionally, the Galerkin method is applicable to microstructures which fall outside of the small $h$ regime.
As we will see, however, the method has the disadvantage of requiring a discretization of $P$ and, for this reason, is more computationally demanding.
We conclude this subsection with a discussion of some additional examples that show the subtle relationship between the spectral gap, the dimensionless self-diffusivity, and geometric features of the microstructure.

The starting point in computing $\eta$ is the equation 
$\sigma_f^2= \left\langle f, f\rangle_\pi + 2\langle f, P(I-P)^{-1}f\right\rangle_\pi,$
which in turn requires that we obtain the solution $g$ to the Markov-Poisson equation $(P-I)g=-f.$
The classical Galerkin method
gives us a general approach for solving this equation as follows
(see \cite{atkinson1987discrete} for a broader discussion of the approach). 
For each $n \geq 1$, let $T_n:L_0^2(\mathcal{X},\pi)\rightarrow R_n$ denote the orthogonal projection to 
the linear span $R_n=\{\phi_1, \dots, \phi_n\}$ of Legendre polynomials defined on $\mathcal{X}$.
Define $g_n\in L_0^2(\mathcal{X},\pi)$ to be the solution of the finite dimensional linear system
	$$ (I-T_nP)g_n = T_nf.$$
Equivalently, we find $g_n\in R_n$ so that 
$\left\langle (I-P)g_n, \psi\right\rangle_\pi = \langle f, \psi\rangle_\pi$
for all $\psi\in R_n$, which can be done as follows.  
Writing  $g_n=\sum_{j=1}^n \alpha_j \phi_j$ 
and defining $x=(\alpha_1, \dots, \alpha_n)^\intercal, 
y= (\langle f, \phi_1\rangle_\pi, \dots, \langle f, \phi_n\rangle_\pi)^\intercal,$ 
and $G=\left(\langle \phi_j,\phi_i\rangle_\pi - \langle P\phi_j, \phi_i\rangle_\pi\right)_{i,j=1}^n,$
we are left to do two computations.
First, we find the entries of the matrix $G$.
Second, we find the solution $x$ to the linear system $Gx=y$. 
This then gives the solution $g_n$ to the finite dimensional linear system, and from it the approximate value $\sigma^2_{\mathrm{GM},n}$.
The following theorem provides an error estimate for this approximation.
A proof is given in Section \ref{sec:Numerical-techniques}. 
Figure \ref{fig:Galerkin error} gives numerical verification of the  convergence and error bound for $\sigma^2_{\mathrm{GM},n}$ 
as given in the theorem.

\begin{thm}\label{thm:galerkin} 
Let $f\in L_0^2(\mathcal{X},\pi)$, where $\mathcal{X}=(-1,1)$, be such that the first derivative $f'$ is absolutely continuous and the second derivative $f''$ is of bounded variation.  Let $\sigma_f^2$ be defined by the equation
$$\sigma_f^2 = \langle f, f\rangle_\pi + 2\left\langle Pf, (I-P)^{-1} f\right\rangle_\pi. $$
Then 
$ \lim_{n\rightarrow \infty} \sigma^2_{\mathrm{GM},n} =\sigma_f^2.$
Moreover, we have the following rate of convergence:
$$ \left|\sigma_f^2 -\sigma_{\mathrm{GM},n}^2\right|\leq \frac{C}{4n-6}$$
where  $C$ is a constant depending on $f$ and $P$ but independent of $n$. 
\end{thm}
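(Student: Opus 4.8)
The plan is to set up the Galerkin approximation as a perturbation of the exact solution and control the error using the regularity of $f$ through the decay of its Legendre coefficients. First I would recall the abstract Galerkin error estimate: since $g_n \in R_n$ solves $\langle (I-P)g_n, \psi \rangle_\pi = \langle f, \psi\rangle_\pi$ for all $\psi \in R_n$, and $g \in L_0^2(\mathcal X, \pi)$ solves $(I-P)g = f$, Galerkin orthogonality gives $\langle (I-P)(g - g_n), \psi\rangle_\pi = 0$ for $\psi \in R_n$. Because $P$ has a positive spectral gap $\gamma$ (by Theorem \ref{thm:geo_erg}, whose hypotheses I am assuming), the bilinear form $a(u,v) := \langle (I-P)u, v\rangle_\pi$ on $L_0^2(\mathcal X, \pi)$ is bounded (operator norm at most $2$) and coercive with constant $\gamma$; Céa's lemma then yields $\|g - g_n\|_\pi \leq (2/\gamma)\, \mathrm{dist}(g, R_n) = (2/\gamma)\|g - T_n g\|_\pi$. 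Thus the whole problem reduces to estimating how well the truncated Legendre expansion $T_n g$ approximates $g$, i.e.\ to bounding $\sum_{l > n} (2l+1)^{-1}\langle \phi_l, g\rangle_\pi^2$ in terms of $n$.

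Next I would translate the regularity hypothesis on $f$ into decay of the Legendre coefficients of $g$. Writing $\hat g_l = \langle \phi_l, g\rangle_\pi \cdot (2l+1)$ for the normalized coefficients, the relation $(I-P)g = f$ combined with the spectral estimate does not directly diagonalize (since $P$ is not the Legendre operator), so instead I would work with $f$ directly: the hypotheses that $f'$ is absolutely continuous and $f''$ is of bounded variation give, by the classical theory of Legendre series (integration by parts three times against the Legendre differential equation, using the endpoint-vanishing of $(1-x^2)$), a bound of the form $|\langle \phi_l, f\rangle_\pi| = O(l^{-7/2})$, equivalently $\sum_{l > n} (2l+1)\langle \phi_l, f\rangle_\pi^2 = O(n^{-4})$ with an explicit constant involving the total variation of $f''$. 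Since $g = (I-P)^{-1} f$ and $(I-P)^{-1}$ is bounded on $L_0^2$ with norm $1/\gamma$, but this alone only transfers $L^2$ bounds; to get the coefficient decay for $g$ I would instead observe that it suffices to bound $\|g - T_n g\|_\pi$, and use $\|g - T_n g\|_\pi \leq \|(I-P)^{-1}\| \cdot \|(I - T_n)(I-P)g\|_\pi$ is \emph{not} quite right either — more carefully, $g - T_n g = (I - T_n) g$ and one writes $(I-T_n)g = (I-T_n)(I-P)^{-1} f$; since $T_n$ and $(I-P)^{-1}$ need not commute, I would instead bound $\|(I-T_n)g\|_\pi$ by noting $\|(I-T_n)g\|_\pi^2 = \sum_{l>n}\langle\phi_l,g\rangle_\pi^2(2l+1)^{?}$ and relate $\langle \phi_l, g\rangle_\pi$ to $\langle \phi_l, f\rangle_\pi$ via $\langle \phi_l, f\rangle_\pi = \langle \phi_l, (I-P)g\rangle_\pi = \langle \phi_l, g\rangle_\pi - \langle P\phi_l, g\rangle_\pi$. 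This is the delicate point and the main obstacle: $P$ is not diagonal in the Legendre basis, so extracting coefficientwise decay of $g$ from that of $f$ requires either a smoothing property of $P$ (that $P$ maps polynomials of degree $n$ into a space not too far from polynomials of comparable degree, up to controlled tails) or a bootstrapping argument on the $L^2$ error.

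Granting the coefficient estimate $\|(I - T_n) g\|_\pi^2 \leq C_1/(4n-6)$ for a constant $C_1 = C_1(f,P)$ — where the precise shape $4n-6$ should fall out of the explicit Legendre tail sum $\sum_{l=n+1}^\infty \frac{1}{2l-1}$ type bound after the integration-by-parts estimate — I would then pass from the error in $g$ to the error in $\sigma_f^2$. Write $\sigma_f^2 = \langle f, f\rangle_\pi + 2\langle f, Pg\rangle_\pi = \langle f,f\rangle_\pi + 2\langle f, g\rangle_\pi - 2\langle f,f\rangle_\pi = 2\langle f,g\rangle_\pi - \langle f,f\rangle_\pi$ (using $Pg = g - f$), and similarly $\sigma_{\mathrm{GM},n}^2 = 2\langle f, g_n\rangle_\pi - \langle f, f\rangle_\pi$ once one checks the discrete quadrature is set up consistently (or, if $\sigma^2_{\mathrm{GM},n}$ is defined via $\langle f,f\rangle_\pi + 2\langle Pg_n, g_n\rangle$-type symmetric expression, handle the extra term by a standard duality/Aubin--Nitsche argument which actually gains another power of the approximation error). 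Then $|\sigma_f^2 - \sigma_{\mathrm{GM},n}^2| = 2|\langle f, g - g_n\rangle_\pi| \leq 2\|f\|_\pi \|g - g_n\|_\pi$, and moreover by Galerkin orthogonality $\langle f, g - g_n\rangle_\pi = \langle (I-P)g, g-g_n\rangle_\pi = \langle (I-P)(g - g_n), g - g_n\rangle_\pi \geq 0$ up to symmetry corrections when $P$ is self-adjoint, so in fact $|\sigma_f^2 - \sigma_{\mathrm{GM},n}^2| \leq 2\|g - g_n\|_\pi \cdot \|(I-P)\| \cdot \|g - T_n g\|_\pi \leq C \|(I-T_n)g\|_\pi^2 \leq C/(4n-6)$, which is the claimed bound. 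The convergence $\sigma_{\mathrm{GM},n}^2 \to \sigma_f^2$ is then immediate. The self-adjointness of $P$ (Proposition \ref{prop:P}, part 3, under the bilateral symmetry assumption inherited from Theorem \ref{thm:geo_erg}) is what makes the quadratic error bound — rather than merely linear — work, and I would flag that as the reason the exponent in the hypothesis on $f$ (two derivatives plus BV) matches the rate $1/(4n-6)$.
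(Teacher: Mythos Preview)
Your proposal correctly identifies the Galerkin/Céa framework and the role of the spectral gap for coercivity, but the step you yourself flag as ``the delicate point and the main obstacle'' is in fact the whole difficulty, and you do not resolve it: you simply grant that $\|(I-T_n)g\|_\pi^2 \leq C_1/(4n-6)$. Your attempt to relate $\langle \phi_l, g\rangle_\pi$ to $\langle \phi_l, f\rangle_\pi$ via $(I-P)g=f$ stalls precisely because $P$ is not diagonal in the Legendre basis, and you offer no substitute (smoothing property of $P$, bootstrap, or otherwise). Since every subsequent inequality in your outline rests on this granted bound, the argument as written has a genuine gap.

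The paper avoids this obstacle by arranging the estimate so that only $\|f - T_n f\|_\pi$ ever needs to be controlled, not the Legendre tail of $g$. Concretely, it splits $|\sigma_f^2 - \sigma^2_{\mathrm{GM},n}|$ into $|\langle f,f\rangle_\pi - \langle T_nf,T_nf\rangle_\pi|$ and $2|\langle Pf,g\rangle_\pi - \langle T_nPf,g_n\rangle_\pi|$, bounds $\|g-g_n\|_\pi$ via $g - g_n = (I-T_nP)^{-1}(g - T_ng)$, and then uses a commutation relation $T_nP = PT_n$ to obtain $\|g - T_ng\|_\pi \leq (1-\|P\|)^{-1}\|f - T_nf\|_\pi$ and $\|Pf - T_nPf\|_\pi \leq \|P\|\,\|f - T_nf\|_\pi$. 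Everything collapses to a \emph{linear} bound $|\sigma_f^2 - \sigma^2_{\mathrm{GM},n}| \leq 2\|f\|_\pi (1-\|P\|)^{-2}\|f - T_nf\|_\pi$, and the factor $1/(4n-6)$ comes from Wang's Legendre-coefficient estimate applied to $f$ with $m=1$ (not $m=2$, and not to $g$), followed by the tail sum $\sum_{k>n}(2k-3)^{-2} \leq 1/(4n-6)$. Two further mismatches: (i) the paper defines $\sigma^2_{\mathrm{GM},n} = \langle T_nf,T_nf\rangle_\pi + 2\langle T_nPf, g_n\rangle_\pi$, which is not your $2\langle f, g_n\rangle_\pi - \langle f,f\rangle_\pi$, since $g_n$ solves only the projected equation $(I-T_nP)g_n = T_nf$ and not $(I-P)g_n = f$; and (ii) the bound is first-order in the truncation error, not quadratic as in an Aubin--Nitsche duality argument, so the self-adjointness of $P$ is not being used to gain an extra power in the way you suggest.
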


\begin{figure}[h]
\begin{centering}
\includegraphics[width=0.5\columnwidth]{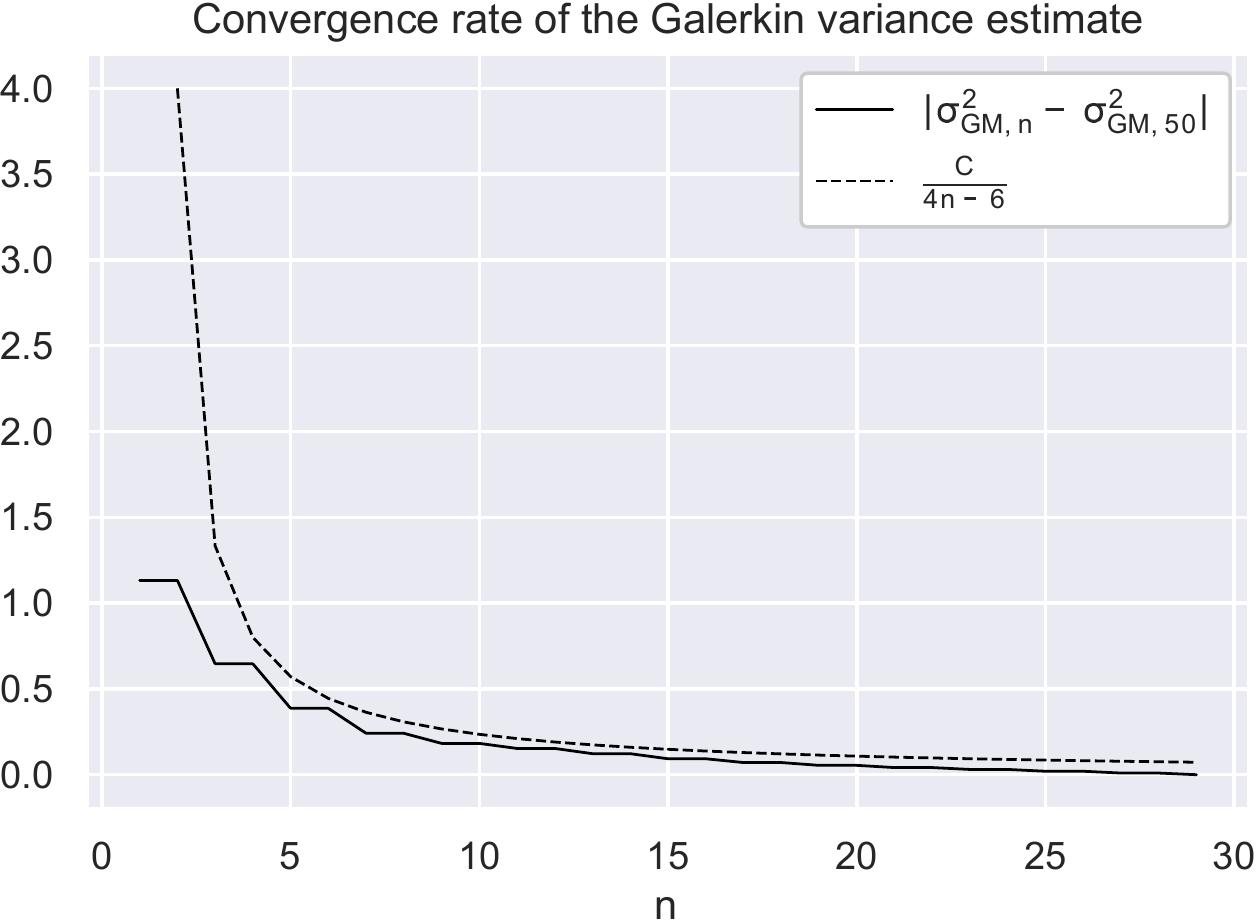}
\par\end{centering}
\caption{Error bound for the Galerkin approximation of $\sigma_f^2$ given in Theorem \ref{thm:galerkin}.}
\label{fig:Galerkin error}
\end{figure}

In practice, the drawback in the Galerkin method arises in finding the entries of the matrix $G$.
We compute the entries of $G$ by introducing a finite dimensional matrix $P_M$ 
that approximates the Markov operator $P$
and perform numerical integration.
We now describe how $P_M$ is constructed.
Given the billiard cell $\mathcal{M}$ with phase space $\mathcal{V}=\mathbb{T}\times\X$, 
we partition $\mathbb{T}$ and $\X$ into $N$ and $M$ 
evenly spaced subintervals $\{I_1\ldots, I_N\}$ and $\{J_1\ldots, J_M\}$, respectively. 
For each subinterval in the partitions, we choose a representative element, 
e.g. the midpoint, to construct the sequences $\{r_k\}_{k=1}^{N}$ and $\{x_\ell\}_{\ell=1}^{M}$, respectively. 
For each pair $(r,x)$ in the set $\{(r_k,x_\ell) \;:\; 1\leq k\leq N, 1\leq \ell\leq M  \}$ 
we then simulate the standard billiard motion of a particle in the cell $\mathcal{M}$ 
with initial conditions $(r,x)$ and record the particle's velocity upon its return to $\mathbb{T}$. 
The finite rank approximation $P_M$ is then the $M\times M$ matrix 
whose $ij$ entry is the proportion of $N$ trajectories 
whose initial velocity $x_i\in J_i$ yield a return velocity in the subinterval $J_j$.

We mention here that $\sigma_{GM,n}^2$ stabilizes for moderately sized values of $M$ (say $M=1000)$,
 independent of the choice of $n$.
Moreover, we have used the matrix $P_M$ to give another numerical approximation of $\sigma_f^2$.
For larger values of $h$, it is possible to solve the Markov-Poisson equation $(P_M-I)g=-f$
using a standard numerical linear system solver (which implements the bi-conjugate stabilized method, or BiC method).
We should mention that the BiC method is used simply to give numerical verification for the use
of the Galerkin method in the large $h$ regime. For small values of $h$, the spectral gap of $P_M$ is small
and the condition number of $I-P_M$, propositional to the inverse of the spectral gap of $P_M$, is too high to be reliable.
In Figure \ref{fig:variances}, we have shown a comparison of approximations for $\sigma_f^2$
for the small bumps family introduced in the previous subsection and the observable $f_a$ defined in (\ref{eqn:truncated})
as produced by the three methods: (1) using equation (\ref{eq:variance-h-approx}) truncated to $n$ terms and denoted by $\sigma^2_{\text{Lser},n}$, (2) using the Galerkin method with dimension $n$ and denoted by $\sigma^2_{\text{GM},n}$,
and (3) using the BiC linear system solver.

\begin{figure}[h]
\begin{centering}
\includegraphics[width=1\columnwidth]{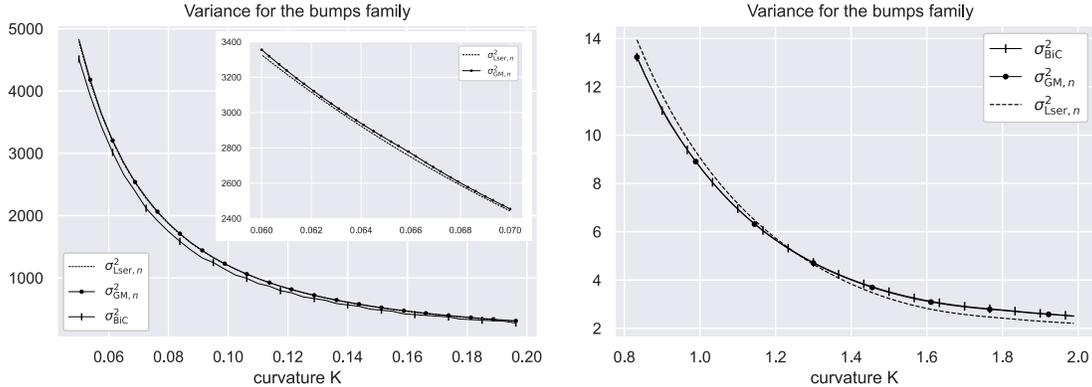}
\par\end{centering}
\caption{Comparison of the variance, computed using the methods described in this section, 
for the simple bumps family with observable $f$ 
given by the horizontal displacement function along the length of the channel with cut-off $a=50000$
(there are no qualitative differences for larger choices of the cut-off). 
The value of $\sigma^2_{\mathrm{Lser},n}$ is computed using (\ref{eq:variance-h-approx}) 
where we have used the first $n = 500$ terms in the series. 
For $\sigma^2_{\mathrm{GM},n}$ we have used dimension $n=200$ and a finite rank approximation $P_M$
of $P$ with $M=1000$.
The same matrix $P_M$ has been used for $\sigma^2_{\text{BiC}}$.
On the left, the dimensionless curvature parameter $K$ is relatively small, while on the right it is relatively large.
The inset of the graph on the left zooms in on the smallest values of $K$, where the 
$\sigma^2_{\mathrm{Lser},n}$ and $\sigma^2_{\mathrm{GM},n}$ approximations are valid.} 
\label{fig:variances}
\end{figure}

We conclude the section with the result of two more numerical experiments.
A first  example is given by the   family of microstructures depicted in Figure \ref{fig:twobumps}. There are two competing curvatures, which are fixed while the height parameter $d$ varies over a range of positive and negative values. When $d<0$, the higher curvature bump is more exposed and when $d>0$
the smaller curvature bump is on top. 

\begin{figure}[h]
\begin{centering}
\includegraphics[width=0.5\columnwidth]{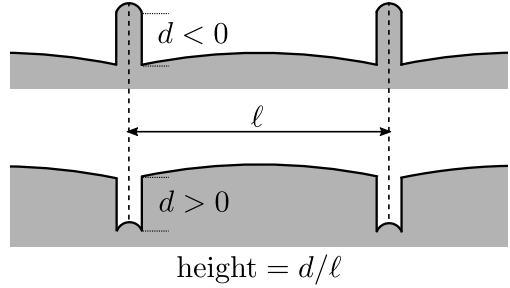}
\par\end{centering}
\caption{The two-bumps family. By varying the parameter $d$ keeping the curvatures constant, we can investigate how the two curvatures compete against each other in the determination of the spectral gap and the dimensionless coefficient of self-diffusivity $\eta$. The result is shown in Figure \ref{fig:twobumps_plot}.}
\label{fig:twobumps}
\end{figure}

The numerical results are shown in the plots of Figure \ref{fig:twobumps_plot}. The interpretation is somewhat straightforward: when the bigger curvature bump is more exposed to collision with the particles, scattering is more diffuse, spectral gap is larger, and diffusivity is smaller (slower diffusion), than
when the less curved bump rises above the other. 
Perhaps more surprising is the near perfect mirror  symmetry between the two graphs. 

\begin{figure}[h]
\begin{centering}
\includegraphics[width=0.6\columnwidth]{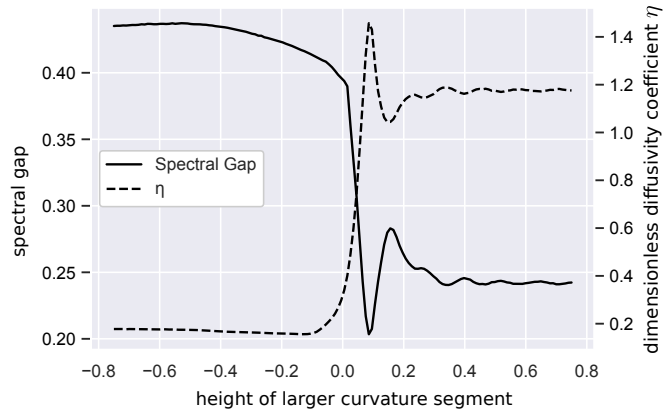}
\par\end{centering}
\caption{Spectral gap and dimensionless coefficient of self-diffusivity for the microstructure of Figure \ref{fig:twobumps}.}
\label{fig:twobumps_plot}
\end{figure}

 In the second example we obtain the dimensionless diffusivity and spectral gap for the 
one-parameter family of microstructures indicated in Figure \ref{fig:bumps_with_wall_figure}. Here the parameter investigated is the (dimensionless) width
of the flat top wall, while the radius $R$ of the curved part is kept constant.  Diffusivity is computed using the Galerkin method (dimension $200$) while the spectral gap is obtained more directly by computing eigenvalues of the finite dimensional approximation of $P$.

\begin{figure}[h]
\begin{centering}
\includegraphics[width=0.4\columnwidth]{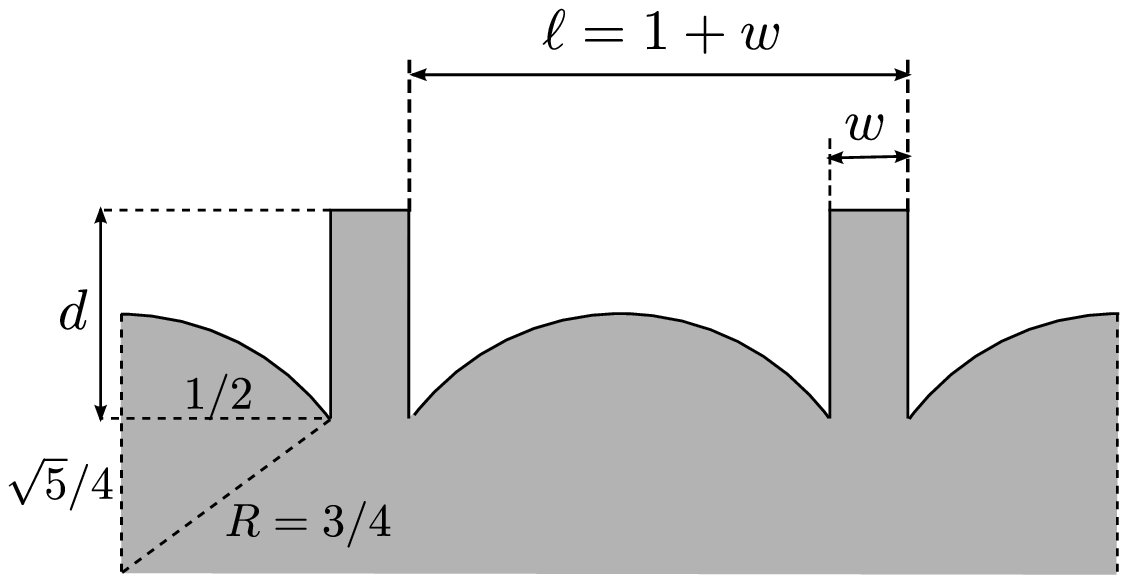}
\par\end{centering}
\caption{Bumps with flat top wall microstructure. The geometric parameters we vary are the relative width $w$ and height $d$ of the flat top wall.  }
\label{fig:bumps_with_wall_figure}
\end{figure}

The results are now somewhat harder to interpret. The interplay between the flat wall top,  the curvature of the middle bumps, and reflection on the sides
of the walls creates a qualitatively more complicated effect. Nevertheless, both this and the previous example show a marked transition in the values of
diffusivity and spectral gap as the height of the wall (with curved top in the first example and flat top in the second) crosses  the height of the adjacent curved segments. Once again, we observe near mirror symmetry in the graphs of spectral gap and diffusivity as functions of the geometric parameter. This is an interesting observation that merits further investigation.

\begin{figure}[h]
\begin{centering}
\includegraphics[width=0.8\columnwidth]{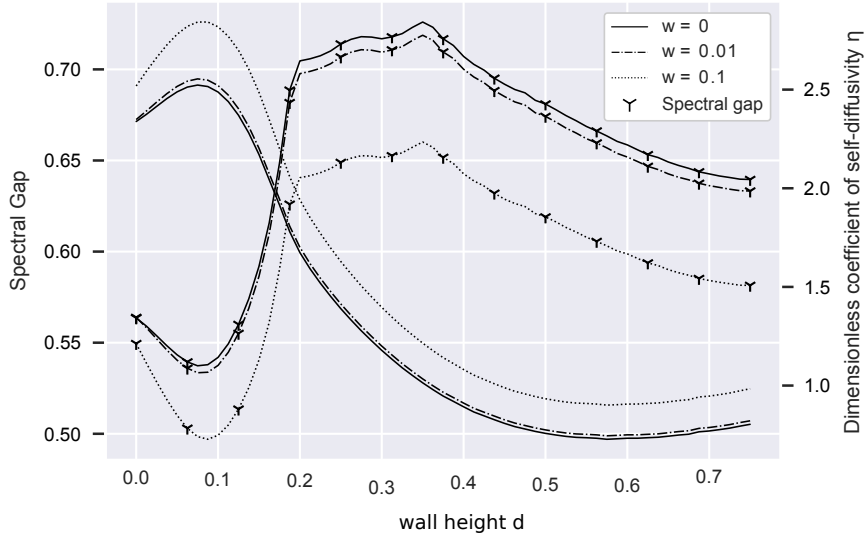}
\par\end{centering}
\caption{Spectral gap and diffusivity coefficient $\eta$ for the microstructure shown in Figure \ref{fig:bumps_with_wall_figure}.
The geometric parameters being varied are the relative height  $d$ of the flat top wall and its relative width $w$. }
\label{fig:bumps_with_wall_plot}
\end{figure}

\section{Spectral gap and ergodicity\label{sec:Spectral-gap}}

The theorems of the previous sections will be strengthened and proved in this and following sections.
We begin this section by introducing a useful technique for decomposing
the operator $P$. The idea will be to condition on the event that
a billiard trajectory within the microscopic cell satisfies certain
properties, which will allow us to focus attention on geometric features
of the microgeometry that create mixing in the dynamics. More specifically,
we show here that under assumptions to be stated, the transition probability
operator $P$ for the random billiard Markov chain has a spectral
gap by showing that for certain components of the decomposition it
is a Hilbert-Schmidt operator. This, along with an additional geometric
assumption that yields a reversible Markov chain, in turn will give
ergodicity.

Let $N:=\mathbb T \times \mathcal X$
be the space of initial conditions of a scattering event and let $N_{1},N_{2},\ldots$
be a measurable partition of $N$. For each $x \in \mathcal X$ and $i\geq 1$, let $N_{i}(x):=\left\{ r\in\mathbb{T}:(r,x)\in N_{i}\right\} $.
Define $\alpha_{i}(x):=|N_{i}(x)|$,
where $|\cdot|$ denotes the size of a set under the normalized Lebesgue measure on $\mathbb T$.
For each $f\in L^{2}(\mathcal{X},\pi)$, define
\[
(P_{i}f)(x)=\begin{cases}
\frac{1}{\alpha_{i}(x)}\int_{N_{i}(x)}f(X(r,x))\,dr & \text{if \ensuremath{\alpha_{i}(x)\neq0}}\\
0 & \text{if \ensuremath{\alpha_{i}(x)=0.}}
\end{cases}
\]
We refer to $P_{i}$ as the \emph{conditional
operator} associated to partition element $N_{i}$. Note that $P_{i}\mathbbm{1}_{A}(x)$ is the
conditional probability that the outgoing velocity vector is in $A\subset\mathcal{X}$
given pre-collision velocity $x$ and given that the event
$N_{i}$ holds. Let $\pi_{i}$ denote the measure on $\mathcal{X}$
such that $\pi_{i}(dx)=\alpha_{i}(x)/(dr\otimes\pi)(N_{i})\,dx$.
Then $\pi_{i}$ is the conditional measure given by $\pi$ conditioned
on the event that $N_{i}$ holds. Finally, observe that for any $f\in L^{2}(\mathcal{X},\pi)$,
it makes sense to decompose $P$ as follows:
\begin{equation}\label{eq:decomposition}
(Pf)(x)=\sum_{i}\alpha_{i}(P_{i}f)(x).
\end{equation}
We now outline some properties of the conditional operators and the
resulting decomposition of $P$. For details of proofs, see \cite{FZ2012}.
\begin{prop}
Let $P_{i}$, $i \geq 1$, be the conditional operators associated to the measurable
partition $N_{1},N_{2},\ldots$ of the space $N$ of initial conditions
of billiard trajectories within billiard microcell $M$, and let $\pi_{j}$
be the conditional measures associated to the partition. Then for each $i \geq 1$,
\begin{enumerate}
\item $P_{i}$ has norm 1.
\item Each term $\alpha_{i}P_{i}$ in the decomposition has norm at most
$\left\Vert \alpha_{i}\right\Vert _{\infty}.$
\item If $N_{i}$ is symmetric---that is, it is invariant under the map
$(r,x)\mapsto(1-r,Jx)$ where $Jx$ denotes the reflection across
the vertical axis in $\mathbb{H}_{-}^{2}$ of the velocity vector corresponding to $x$ and $\mathbb{T}$ is identified
with the unit interval--- then $P_{i}$ is self-adjoint as an operator
on $L^{2}(\mathcal{X},\pi_{i})$.
\end{enumerate}
\end{prop}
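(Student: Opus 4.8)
\emph{Measure-theoretic and symmetry setup.} All three properties come from the structure of the return billiard map, which I denote $\mathcal{T}\colon N\to N$, $\mathcal{T}(r,x)=(r',X(r,x))$, on $N=\mathbb{T}\times\mathcal{X}$. The inputs, all standard billiard facts underlying Proposition~\ref{prop:P} (see \cite{Feres2007,FZ2012}), are: (i) $\mathcal{T}$ preserves $\mu:=dr\otimes\pi$, the invariant measure of the cross-section; (ii) the billiard is reversible, so $\mathcal{T}\circ\mathcal{R}=\mathcal{R}\circ\mathcal{T}^{-1}$ for the velocity reversal $\mathcal{R}(r,x):=(r,-x)$; and (iii) when the cell is bilaterally symmetric, $\mathcal{T}\circ\mathcal{S}=\mathcal{S}\circ\mathcal{T}$ for the reflection $\mathcal{S}(r,x):=(1-r,-x)$, that is, $(r,x)\mapsto(1-r,Jx)$. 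Then $\mathcal{R}$, $\mathcal{S}$, and the composite $\iota:=\mathcal{S}\circ\mathcal{R}\circ\mathcal{T}$ are $\mu$-preserving involutions; explicitly $\iota(r,x)=(1-r',x')$ where $(r',x')=\mathcal{T}(r,x)$, and a short computation from (ii)--(iii) shows that $\iota$ interchanges incoming and outgoing velocities, precisely: $\Pi_*\circ\iota=\Pi_*\circ\mathcal{T}$ and $E\circ\iota=\mathrm{swap}\circ E$, where $\Pi_*(r,x):=x$ and $E(r,x):=(x,X(r,x))$. Writing $\mu_i:=\mathbbm{1}_{N_i}\mu/\mu(N_i)$, the operator $P_i$ is exactly the Markov kernel obtained by disintegrating, over the first coordinate, the joint law $\lambda_i:=E_*\mu_i$ of the (incoming, outgoing) velocity pair; its first marginal is $\pi_i$ and its second marginal is $\pi_iP_i$, the outgoing-velocity law $\rho_i$. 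In the applications the partition is chosen so that each $N_i$ is invariant under time reversal (e.g.\ partitioning by the number and type of collisions of the return trajectory) and, in item~3, also under $\mathcal{S}$; each such $N_i$ is then $\iota$-invariant.

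\emph{Items 1 and 2.} For item~1 I would first check that $\pi_i$ is stationary for $P_i$. Since $\iota$ preserves $\mu$ and fixes $N_i$ it preserves $\mu_i$, and pushing $\mu_i$ forward by $\iota$, using $\Pi_*\circ\iota=\Pi_*\circ\mathcal{T}$, identifies $\rho_i$ with $\pi_i$, i.e.\ $\pi_iP_i=\pi_i$. As $P_i$ is a Markov operator with $P_i\mathbbm{1}=\mathbbm{1}$ on $\{\alpha_i>0\}$, the conditional Jensen inequality $(P_if)^2\le P_i(f^2)$ then gives $\|P_if\|_{\pi_i}^2\le\int P_i(f^2)\,d\pi_i=\int f^2\,d(\pi_iP_i)=\|f\|_{\pi_i}^2$, so $P_i$ has norm at most $1$ on $L^2(\mathcal{X},\pi_i)$, and $P_i\mathbbm{1}=\mathbbm{1}$ makes it exactly $1$. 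Item~2 then follows routinely: since $\alpha_i\le1$, $\pi_i$ has bounded density with respect to $\pi$, so the inclusion $L^2(\mathcal{X},\pi)\hookrightarrow L^2(\mathcal{X},\pi_i)$ is bounded, and estimating $\|\alpha_iP_if\|_\pi^2=\int\alpha_i^2(P_if)^2\,d\pi\le\|\alpha_i\|_\infty\int\alpha_i(P_if)^2\,d\pi$, rewriting $\int\alpha_i(P_if)^2\,d\pi$ as a multiple of $\|P_if\|_{\pi_i}^2$, and applying item~1 yields $\|\alpha_iP_i\|_{L^2(\pi)\to L^2(\pi)}\le\|\alpha_i\|_\infty$.

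\emph{Item 3.} I would reduce self-adjointness of $P_i$ on $L^2(\mathcal{X},\pi_i)$ to symmetry of $\lambda_i$ under coordinate exchange: $\langle P_if,g\rangle_{\pi_i}$ is the integral of $(x,y)\mapsto g(x)f(y)$ against $\lambda_i$, while $\langle f,P_ig\rangle_{\pi_i}$ is its integral against $\mathrm{swap}_*\lambda_i$, so the two agree for all $f,g$ precisely when $\lambda_i=\mathrm{swap}_*\lambda_i$. Now $N_i$ being symmetric makes it $\iota$-invariant, hence $\mu_i$ is $\iota$-invariant, and then $\mathrm{swap}_*\lambda_i=(\mathrm{swap}\circ E)_*\mu_i=(E\circ\iota)_*\mu_i=E_*\mu_i=\lambda_i$, as required.

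\emph{Main obstacle.} The analytic content---the Jensen step and the norm estimates---is routine once the reversibility facts are in place; the real work is the symmetry bookkeeping behind them. One must keep the velocity reversal $x\mapsto-x$ distinct from the spatial reflection of the cell, verify the commutation identities $\mathcal{T}\circ\mathcal{R}=\mathcal{R}\circ\mathcal{T}^{-1}$ and $\mathcal{T}\circ\mathcal{S}=\mathcal{S}\circ\mathcal{T}$, and confirm that $\iota$ is a $\mu$-preserving involution that both interchanges incoming and outgoing velocities ($E\circ\iota=\mathrm{swap}\circ E$) and preserves the conditioning set $N_i$---in other words, that conditioning on a time-reversal- and reflection-invariant event keeps the scattering kernel reversible. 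This is where bilateral symmetry of the cell and the symmetry hypothesis on $N_i$ are genuinely used, and where the argument demands the most care.
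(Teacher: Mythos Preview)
The paper does not prove this proposition; it simply writes ``For details of proofs, see \cite{FZ2012}.'' So there is no in-paper argument to compare against, and your task was effectively to reconstruct the argument from \cite{Feres2007,FZ2012}. Your reconstruction is the standard one and is correct in outline: the measure-preserving involution $\iota$ built from time reversal, spatial reflection, and the return map is exactly the tool that makes $\pi_i$ stationary for $P_i$ and the joint law $\lambda_i$ swap-symmetric, from which items~1 and~3 follow; item~2 is then the routine $L^2(\pi)$ estimate you give.

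One point worth flagging, which you already half-acknowledge in your ``main obstacle'' paragraph: your proof of item~1 (stationarity of $\pi_i$, hence norm~$1$ on $L^2(\mathcal{X},\pi_i)$) uses that $N_i$ is $\iota$-invariant, i.e.\ invariant under both time reversal and the reflection $\mathcal{S}$. The proposition as stated in the paper imposes the symmetry hypothesis on $N_i$ only in item~3, not in items~1--2. This is not a flaw in your argument but rather an imprecision in the proposition's statement: without some invariance of $N_i$, $\pi_i$ need not be $P_i$-stationary and the norm-$1$ claim on $L^2(\pi_i)$ can fail. The source \cite{FZ2012} indeed works with symmetric partition elements throughout, and the applications in the present paper (Assumption~\ref{assu:partition}) require the $N_i$ to be symmetric anyway. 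You handle this honestly by noting that ``in the applications the partition is chosen so that each $N_i$ is invariant under time reversal\dots and, in item~3, also under $\mathcal{S}$''; it would be cleaner to state up front that you are assuming each $N_i$ symmetric for all three items, which is what the cited reference actually does.
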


The following assumptions will be shown to be sufficient
for ergodicity.
\begin{assumption}
\label{assu:reversibility}The billiard cell is symmetric
with respect to reflection across the vertical axis given by the map
$(x,y)\mapsto(-x,y)$.
\end{assumption}
\begin{assumption}
\label{assu:partition}There exists a measurable partition $N_{1},N_{2},\ldots$ whose elements are symmetric and
such that the following holds for at least one partition element $N_{j}$.
\begin{enumerate}
\item The trajectories with initial conditions in $N_{j}$ collide only
with portions of the boundary of the microscopic billiard cell consisting
piecewise smooth concave curves whose curvatures are bounded below by a constant $K>0$.
\item $\inf_{v\in\mathcal{X}}\alpha_{j}(v)>0$.
\end{enumerate}
\end{assumption}
Note that these assumptions are not optimal---for example,
billiard cells with convex sides have been shown to give
geometrically ergodic random billiard Markov chains in \cite{CFZ2016}---but
capture a large class of examples like those in Section \ref{sec:definitions and results}.
The key idea of Assumption \ref{assu:partition} is that partitioning the phase space
and subsequently decomposing the Markov transition operator into corresponding conditional operators
allows us to focus our study of the operator only on the features that create enough dispersion to yield ergodicity.

\begin{thm}
\label{thm:small-bumps-gap}Let $P$ be the Markov transition operator
for a random billiard Markov chain whose billiard cell
satisfies Assumptions \ref{assu:reversibility} and \ref{assu:partition}.
Then $P$ is a self-adjoint operator with spectral gap. As a result,
there exists a constant $\rho\in(0,1)$ such that for each probability
measure $\mu\in\mathcal{P}(\mathcal{X})$, absolutely continuous with respect to $\pi$ with $\|\mu\|_\pi<\infty$,
there exists a constant $M_{\mu}<\infty$ such that $\left\Vert \mu P^{n}-\pi\right\Vert _{v}\leq M_{\mu}\rho^{n}.$ 
\end{thm}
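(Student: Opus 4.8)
The plan is to get self-adjointness directly from the symmetry hypothesis, and then to extract the spectral gap by using the single dispersing partition element supplied by Assumption~\ref{assu:partition} to split $P$ into a compact piece and a strict contraction. Assumption~\ref{assu:reversibility} is exactly the bilateral symmetry hypothesis, so Proposition~\ref{prop:P}(3) gives that $P$ is self-adjoint on $L^2(\mathcal X,\pi)$; since $P$ fixes the constant function $\mathbbm 1$, the subspace $L_0^2(\mathcal X,\pi)$ is $P$-invariant and $P|_{L_0^2}$ is self-adjoint, so having a spectral gap is equivalent to $\|P\|_{L_0^2(\mathcal X,\pi)}<1$. Let $N_j$ be the distinguished dispersing element from Assumption~\ref{assu:partition}, and using the decomposition (\ref{eq:decomposition}) write
\[
P=\alpha_j P_j+R,\qquad R:=\sum_{i\neq j}\alpha_i P_i .
\]
Because $N_j$ is symmetric, the conditional operator $P_j$ is self-adjoint on $L^2(\mathcal X,\pi_j)$ by the properties of conditional operators recorded above; since $\pi_j$ is a constant multiple of $\alpha_j\,\pi$, a direct computation shows that the weighted operator $\alpha_j P_j$ is self-adjoint on $L^2(\mathcal X,\pi)$, and hence so is $R=P-\alpha_j P_j$.

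The argument then rests on two inputs: (i) $\alpha_j P_j$ is a Hilbert--Schmidt (hence compact) operator on $L^2(\mathcal X,\pi)$; and (ii) $\|R\|_{L^2(\pi)\to L^2(\pi)}\le\sqrt{1-c}<1$, where $c:=\inf_x\alpha_j(x)>0$ by Assumption~\ref{assu:partition}(2). Statement (ii) is soft: conditional Jensen gives $|(P_if)(x)|^2\le(P_if^2)(x)$, and Cauchy--Schwarz in $i$ with weights $\alpha_i(x)$, together with $\sum_{i\neq j}\alpha_i(x)=1-\alpha_j(x)\le 1-c$, yields $|(Rf)(x)|^2\le(1-c)(Pf^2)(x)$; integrating against $\pi$ and using $\pi(Pf^2)=\pi(f^2)$ gives (ii). Statement (i) is the geometric heart of the proof: by Assumption~\ref{assu:partition}(1) every trajectory starting in $N_j$ meets only concave arcs of curvature at least $K$, so each reflection defocuses the incoming beam; propagating the elementary defocusing (mirror) estimate through the finitely many collisions shows that for $\pi_j$-a.e.\ $x$ the return velocity has a density $p_j(x,\cdot)$ that is bounded in terms of $K$ and the free-flight lengths, so $\iint p_j(x,y)^2\,\pi_j(dx)\,\pi_j(dy)<\infty$ and $P_j$ is Hilbert--Schmidt. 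This is the compactness estimate already used in \cite{Feres2007,FZ2012}, and establishing it---with bounds uniform near grazing collisions and stable under composition over several reflections---is the step I expect to be the main obstacle.

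With (i) and (ii) in hand, $P$ is a self-adjoint compact perturbation of the self-adjoint operator $R$ with $\|R\|\le\sqrt{1-c}$, so Weyl's theorem gives $\sigma_{\mathrm{ess}}(P)=\sigma_{\mathrm{ess}}(R)\subseteq[-\sqrt{1-c},\sqrt{1-c}]$. Hence any spectral value of $P$ of modulus exceeding $\sqrt{1-c}$ is an isolated eigenvalue of finite multiplicity, and such eigenvalues cannot accumulate at $\pm1$. On $L_0^2(\mathcal X,\pi)$ the eigenvalue $1$ is excluded because the chain is ergodic (the dispersing element $N_j$ makes $P$ irreducible) and the eigenvalue $-1$ is excluded because it is aperiodic (an iterate of $P_j$ has a strictly positive density, the supports of the $p_j(x,\cdot)$ overlapping); both facts follow from the dispersing geometry and are already used in \cite{Feres2007,FZ2012}. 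Therefore $\|P\|_{L_0^2(\mathcal X,\pi)}<1$, i.e.\ $P$ has a positive spectral gap $\gamma=1-\rho$. The total-variation bound is then the standard consequence of self-adjointness together with the spectral gap, as recalled in Section~\ref{sec:Spectral-gap}: if $\mu\ll\pi$ with $d\mu/d\pi=1+g$ and $g\in L_0^2(\mathcal X,\pi)$, then $\mu P^n-\pi$ has $\pi$-density $P^n g$ with $\|P^n g\|_\pi\le\rho^n\|g\|_\pi$, so $\|\mu P^n-\pi\|_v\le\tfrac12\|P^n g\|_\pi\le M_\mu\rho^n$ with $M_\mu=\tfrac12\|g\|_\pi$ (see \cite{RR1997}).
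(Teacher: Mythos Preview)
Your proposal is correct and follows essentially the same route as the paper: the same decomposition $P=\alpha_jP_j+R$, the same identification of $\alpha_jP_j$ as Hilbert--Schmidt (which the paper packages in Lemmas~\ref{lem:kernel-def} and~\ref{lem:kernel-estimates}), the same Weyl-type essential-spectrum argument (the paper's Lemma~\ref{lem:gap}), and the same Cauchy--Schwarz passage to total variation. Two minor differences worth noting: (1) your Jensen/Cauchy--Schwarz argument yields $\|R\|\le\sqrt{1-c}$, whereas the paper asserts $\|T\|\le 1-c$ without spelling out the computation---either bound suffices; (2) you explicitly flag the need to exclude $\pm1$ from the discrete spectrum of $P|_{L_0^2}$ via ergodicity/aperiodicity, a point the paper absorbs into its invocation of Lemma~\ref{lem:gap} (where the quantity $\gamma(K)$ is doing that work implicitly). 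You are also right that the Hilbert--Schmidt estimate for $P_j$ under dispersing geometry is the substantive analytic input, and the paper likewise defers to \cite{Feres2007} for the kernel bounds.
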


Note that Theorem \ref{thm:small-bumps-gap} generalizes Theorem \ref{thm:geo_erg}. Indeed, for billiard cells that satisfy the geometric property in the hypotheses of Theorem \ref{thm:geo_erg}, it is clear that for each $x$, there exists an open set $W_x^1 \subset \mathbb{T}$ such that for each $r \in W_x^1$, the billiard trajectory with initial condition $(r,x)$ results in one collision with the boundary of the billiard cell before returning to the reference line. Letting $N_1 = \{(r,x) : x \in \mathcal{X}, r \in W_x^1\}$ and $N_2 = N\setminus N_1$, it is clear that Assumptions \ref{assu:reversibility} and \ref{assu:partition} are satisfied.
We also note that Theorem \ref{thm:small-bumps-gap} includes as a special case, the case of i.i.d. mixtures of microstructures.

The proof of Theorem \ref{thm:small-bumps-gap} requires a series of lemmas, which we now introduce. Note that these lemmas are adapted from a series of lemmas in \cite{Feres2007} but the present statements have more relaxed hypotheses on the geometry of the billiard cell and thus are stronger.

Before stating the first lemma, we need to introduce some notation. Consider a measurable partition satisfying the conditions in Assumption \ref{assu:partition}, where $N_j$ and $P_j$ are the partition element and corresponding conditional operator that satisfy the restrictions in the assumption. Let $W_x^i := \{r \in \mathbb{T} : (r,x) \in N_i\}$ for each partition element $N_i$. For each $x \in \mathcal X$, we let $X_x : \mathbb T \to \mathcal X$ be the function given by $X_x(r) = X(r,x)$, where $X(r,x)$ is the return velocity at the reference line of the billiard cell for a trajectory with initial condition $(r,x)$.

\begin{lem}
\label{lem:kernel-def}Suppose the billiard cell satisfies Assumption \ref{assu:partition}, with partition element $N_j$ and conditional operator $P_j$ satisfying the conditions in the assumption. Then for all $x \in \mathcal X$, the set $W_x^j = \{r \in \mathbb{T} : (r,x) \in N_j\}$ consists of a countable union of open intervals $W_{x,i} \subset \mathbb{T}$. Moreover, the restriction $X_{x,i} := X_x|_{W_{x,i}}$ is a diffeomorphism from $W_{x,i}$ onto its image $V_{x,i}$. Finally, when we use the convention $\mathcal X = (0,\pi)$, we have that for all $f \in L^2(\mathcal{X}, \pi_j)$, $P_jf(x)=\int_{\mathcal{X}}f(\phi)\,\omega(x,\phi)\,\pi_j(d\phi)$
where
\begin{equation}
\omega(x,\phi):= \frac{(\lambda\otimes\pi)(N_j)}{\alpha_j(x)\alpha_j(\phi)}\sum_{i}\mathbbm{1}_{V_{x,i}}(\phi)\left(\frac{1}{2}\left|X_{x,i}'\left(X_{x,i}^{-1}(\phi)\right)\right|\sin\phi\right)^{-1}\label{eq:kernel-1}
\end{equation}
and $\mathbbm{1}_{V_{x,i}}$ denotes the indicator function of
the set $V_{x,i}$. 
\end{lem}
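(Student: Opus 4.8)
The plan is to prove the three assertions of the lemma in turn --- the interval structure of $W_x^j$, the diffeomorphism property of the branches $X_{x,i}$, and the integral kernel representation of $P_j$ --- essentially by adapting the corresponding arguments of \cite{Feres2007} so that they use only the geometry of the distinguished concave arcs hit by trajectories in $N_j$ rather than that of the whole cell boundary.

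First I would fix $x \in \mathcal X$ and set aside the \emph{singular} parameters $r$: those whose trajectory $(r,x)$ meets a corner of the cell, grazes a boundary arc tangentially, or returns to the reference line tangentially. Off this null set the billiard flow depends smoothly on $r$, so the itinerary (the finite ordered word of boundary arcs visited before the return) is locally constant, and the property defining $N_j$ in Assumption \ref{assu:partition} --- that this word uses only the distinguished concave arcs --- is an \emph{open} condition on $r$, since the distinguished arcs are relatively open and the return crossing is transversal. Discarding this singular null set leaves $P_j$, $\alpha_j$ and $\pi_j$ unchanged (in a planar billiard the singular set is a locally finite union of smooth curves, hence negligible in each fibre $\mathbb T \times \{x\}$ for all but at most finitely many $x$, which can be handled directly), so we may take $W_x^j$ to be open; it is then a countable disjoint union of open arcs, and on each connected component the locally constant itinerary is globally constant. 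These components are the intervals $W_{x,i}$.

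Next, on a fixed $W_{x,i}$ the return map $X_{x,i} = X_x|_{W_{x,i}}$ is a composition of a fixed finite sequence of free flights and specular reflections off smooth arcs, hence $C^\infty$; it therefore suffices to show that $X_{x,i}'$ never vanishes, for then it has constant sign on the interval $W_{x,i}$, so $X_{x,i}$ is strictly monotone, hence injective, and --- its target $\mathcal X$ being an honest interval rather than a circle --- a diffeomorphism onto its image $V_{x,i}$. The non-vanishing is where I would invoke the dispersing-billiard estimate. Following the pencil of parallel rays that enter with velocity $x$, one tracks the curvature of the associated wavefront through the Jacobi-field recursion: each specular reflection off a concave arc of curvature $\mathcal K$ adds to that curvature a strictly positive quantity proportional to $\mathcal K/\cos\varphi$ (the mirror equation), while free flight preserves its positivity. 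An initially flat wavefront (zero curvature) therefore acquires, and retains, a finite positive curvature by the time it returns to the reference line, which is exactly the statement $X_{x,i}' \neq 0$, with a sign fixed by the itinerary. Conditioning on $N_j$ is precisely what guarantees that only such concave arcs are met, so the estimate of \cite{Feres2007} applies verbatim to the restricted dynamics; the uniform lower bound $\mathcal K \geq K > 0$ is not needed for mere non-vanishing here, but it is what the subsequent Hilbert-Schmidt argument will use to bound $1/|X_{x,i}'|$ from above.

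Finally, for the kernel formula I would begin from $(P_j f)(x) = \alpha_j(x)^{-1} \int_{W_x^j} f(X(r,x))\,dr$ for bounded continuous $f$, split the integral over the components $W_{x,i}$, and in each piece substitute $\phi = X_{x,i}(r)$, so that $r = X_{x,i}^{-1}(\phi)$, $dr = |X_{x,i}'(X_{x,i}^{-1}(\phi))|^{-1}\,d\phi$, and the domain of integration becomes $V_{x,i}$. Using $\pi(d\phi) = \tfrac12 \sin\phi\,d\phi$ in the convention $\mathcal X = (0,\pi)$, together with $\pi_j(d\phi) = \alpha_j(\phi)\,(\lambda \otimes \pi)(N_j)^{-1}\,\pi(d\phi)$ for the conditional measure, solving for $d\phi$, and reassembling the branches with the indicators $\mathbbm{1}_{V_{x,i}}$ yields exactly the kernel $\omega$ of \eqref{eq:kernel-1}; measurability of $\omega$ and the passage from bounded continuous $f$ to all of $L^2(\mathcal X, \pi_j)$ are routine, with $\int_{\mathcal X} \omega(x,\cdot)\,d\pi_j = (P_j \mathbbm{1})(x) = 1$ serving as a consistency check. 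I expect the main obstacle to be the non-vanishing of $X_{x,i}'$ in the diffeomorphism step: that is the single place where the billiard geometry --- the concavity of the distinguished arcs --- genuinely enters, the rest being bookkeeping; I would also take care in the interval-structure step to confirm that the singular set is indeed null and that removing it is harmless.
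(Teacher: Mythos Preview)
Your proposal is correct and follows essentially the same approach as the paper's proof: both discard a null singular set to obtain the open interval structure of $W_x^j$, invoke the dispersing property of the concave arcs (you via the wavefront/mirror-equation recursion, the paper via a citation to \cite{CM2006}) to get $X_{x,i}'\neq 0$, and then derive the kernel by the change of variables $\phi = X_{x,i}(r)$. The only cosmetic difference is that the paper verifies the kernel identity on indicator functions $\mathbbm{1}_A$ and then extends by density, whereas you carry out the substitution directly for bounded continuous $f$; these are equivalent bookkeeping choices.
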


\begin{proof}
We begin by outlining some standard facts in the theory of classical billiards. See \cite{CM2006} for details.
Let $\Gamma$ denote the boundary of the billiard cell $Q$ and note that $\Gamma = \bigcup_i \Gamma_i$ consists of a union of smooth component curves, or walls. 
We denote by $\Gamma_0$ the reference line, which is identified with $\mathbb{T}$. Let $\mathcal{M} = \bigcup_i \mathcal{M}_i$ be the collision space, where each set $\mathcal{M}_i$ consists of pairs $(q,v)$ where $q \in \Gamma_i$ and $v$ points into the interior of $Q$. 
The billiard map $\mathcal{F}:\mathcal{M} \to \mathcal{M}$ is the map defined so that $\mathcal{F}(q,v)$ gives the pair $(q',v')$ where $q'$ is the first intersection of the ray $q+tv$, $t>0$, with $\partial Q$. 
The normalized measure $m\otimes\pi \in \mathcal{P}(\mathcal{M})$, where $m$ is the normalized arclength measure on $\partial Q$, is left invariant by $\mathcal{F}$.
Moreover, if we let $T:N \to N$ be the first return map of billiard orbits, the measure $dr \otimes \pi$, where $dr$ is the normalized Lebesgue measure on $\mathbb{T}$, is left invariant by $T$.
By Poincar\'e recurrence, there is a subset $E_0 \subset N$ of full $dr\otimes\pi$ measure of orbits that start at and return to $\Gamma_0$ in a finite number of steps, and the orbits are non-singular, ie. they do not hit corners of boundary and there are no grazing tangential collisions.
As a result, for each $(q,v)\in E_0$, there is an open neighborhood in $N$ whose elements return to $N$ in the same number of steps as $(q,v)$ and the return map on this set is smooth.
In a similar fashion, it follows that the map $X_x : \mathbb{T} \to \mathcal{X}$ is smooth on an open subset of $\mathbb{T}$ and its restriction to the set $W_x^j$ is likewise a diffeomorphism on an open set which consists of a countable union of open intervals $W_{x,i} \subset \mathbb{T}$. It is also the case that for dispersing billiards, e.g. those billiards for which $\partial Q$ consists of smooth convex curves with positive curvature, the restriction $X_{x,i}$ of $X_x$ to the set $W_{x,i}$ has the property that $X_{x,i}'\neq 0$. Moreover, the summation in (\ref{eq:kernel-1}) is well defined; see \cite[Lemma 5.56]{CM2006}.

We conclude the proof with a verification that the function $\omega$ defined in  (\ref{eq:kernel-1}) is a kernel for $P_j$. Let $A \subset \mathcal{X}$ be a measurable set and let $A_{x,i} = \{r \in W_{x,i} : X_{x,i}(r) \in A\}$. Then
\begin{align*}
\int_{A} \omega(x, \phi)\,\pi_j(d\phi) &= \frac{1}{\alpha_j(x)} \sum_{i} \int_{A \cap V_{x,i}} \left(\frac{1}{2}\left|X_{x,i}'\left(X_{x,i}^{-1}(\phi)\right)\right|\sin\phi\right)^{-1}\, \pi(d\phi) \\
&= \frac{1}{\alpha_j(x)} \sum_{i} \int_{X_{x,i}(A_{x,i})} \left(\left|X_{x,i}'\left(X_{x,i}^{-1}(\phi)\right)\right|\right)^{-1}\, d\phi \\
&= \frac{1}{\alpha_j(x)} \sum_{i} \int_{A_{x,i}} dr \\
&= P_j \mathbbm{1}_{A}(x).
\end{align*}
Since this relation holds for indicator functions, it follows by a standard argument using linearity and the density of simple functions in $L^2(\mathcal X, \pi_j)$ that $P_j$ has kernel $\omega$ for all $f \in L^2(\mathcal{X},\pi_j)$.
\end{proof}

The next intermediary lemma gives an estimate on the kernel in Lemma \ref{lem:kernel-def}. Its proof follows from \cite[Lemmas 6.5, 6.6, 6.7]{Feres2007} with only minor modifications.

\begin{lem}
\label{lem:kernel-estimates}
Consider a billiard cell satisfying Assumption \ref{assu:partition} and let $\omega$ be the kernel given in (\ref{eq:kernel-1}). Then $\omega \in L^2(\mathcal{X}\times\mathcal{X}, \pi_j\otimes\pi_j)$.
\end{lem}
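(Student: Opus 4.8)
The plan is to show $\omega \in L^2(\mathcal{X}\times\mathcal{X},\pi_j\otimes\pi_j)$ by reducing the computation of $\iint \omega(x,\phi)^2\,\pi_j(d\phi)\,\pi_j(dx)$ to an integral over the billiard phase space, where the relevant quantity is controlled by the geometry assumed in Assumption \ref{assu:partition}---namely, that trajectories with initial conditions in $N_j$ collide only with concave walls of curvature bounded below by $K>0$. First I would fix $x$ and examine $\int_{\mathcal X}\omega(x,\phi)^2\,\pi_j(d\phi)$. Using the explicit form (\ref{eq:kernel-1}) and changing variables back from $\phi$ to $r = X_{x,i}^{-1}(\phi)$ on each piece $W_{x,i}$ (so $d\phi = |X_{x,i}'(r)|\,dr$), one obtains, up to the bounded prefactor $(\lambda\otimes\pi)(N_j)/(\alpha_j(x)\alpha_j(\phi))$, an integral over $W_x^j$ of an expression of the form $\left(|X_x'(r)|\sin\phi\right)^{-1}$ (one power of the Jacobian is consumed by the change of variables, leaving one power in the denominator). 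So the key point is to bound $\int_{W_x^j} |X_x'(r)|^{-1}\,(\sin X_x(r))^{-1}\,dr$, uniformly enough in $x$ that a further integration in $x$ (against $\pi_j$, which has bounded density relative to $\pi$ on the relevant set by Assumption \ref{assu:partition}(2)) converges.

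The next step is the geometric input: for a dispersing (concave-walled) billiard with curvature bounded below by $K$, the derivative $X_x'(r)$ of the return map, expressed in the Jacobi/continued-fraction coordinates of \cite{CM2006}, does not degenerate to zero at the rate that would make $|X_x'|^{-1}$ non-integrable; more precisely, the reciprocal derivative admits a bound in terms of the collision parameters (distance travelled, angle of incidence, curvature) of the type used in \cite[Lemmas 6.5--6.7]{Feres2007}. I would invoke those lemmas essentially verbatim: they give that, on $W_x^j$, the function $r\mapsto |X_x'(r)|^{-1}(\sin X_x(r))^{-1}$ is dominated by a fixed $L^1$ function, with the $L^1$ bound depending only on $K$ and the geometry of the cell (and in particular independent of $x$). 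The only modification needed relative to \cite{Feres2007} is that there the entire boundary above the reference line was dispersing, whereas here only the portion hit by $N_j$-trajectories is assumed dispersing; but since $P_j$ is by construction the conditional operator restricted to exactly those trajectories, the estimates apply on $W_x^j$ without change---this is precisely the point of the conditioning technique, and why the present hypotheses are weaker yet the conclusion survives.

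Finally I would assemble the pieces: $\int_{\mathcal X}\omega(x,\phi)^2\,\pi_j(d\phi) \le C(x)$ with $\sup_x C(x) < \infty$ after absorbing the $\alpha_j$ factors (bounded below by Assumption \ref{assu:partition}(2), so their reciprocals are bounded above), and then $\iint \omega^2 \le \int \sup_x C(x)\,\pi_j(dx) < \infty$ since $\pi_j$ is a probability measure. The main obstacle is the singularity analysis of $|X_x'(r)|^{-1}$ near the endpoints of the intervals $W_{x,i}$---where the return map can have a square-root or worse singularity as a trajectory approaches a grazing or corner configuration---and verifying that the exponent is mild enough for $L^2$ (not merely $L^1$) integrability of $\omega$; this is exactly what the curvature lower bound $K>0$ buys, and it is the content that must be imported carefully from \cite[Lemmas 6.5, 6.6, 6.7]{Feres2007}, checking that the "only minor modifications" claimed are indeed only the restriction of the domain from the full dispersing boundary to $W_x^j$.
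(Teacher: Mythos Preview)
Your proposal is correct and takes essentially the same approach as the paper: the paper gives no self-contained argument for this lemma, stating only that it ``follows from \cite[Lemmas 6.5, 6.6, 6.7]{Feres2007} with only minor modifications,'' and your outline is precisely a fleshed-out version of that deferral---you perform the change of variables back to $r$ to reduce the $L^2$ bound on $\omega$ to integrability of $|X_x'(r)|^{-1}(\sin X_x(r))^{-1}$ over $W_x^j$, then invoke the cited lemmas for the dispersing-billiard derivative estimates, and you correctly identify the ``minor modification'' as the restriction of the domain to $W_x^j$ afforded by the conditioning. One small point worth tightening in a full write-up: when squaring the sum over $i$ in $\omega(x,\phi)$ the images $V_{x,i}$ need not be disjoint, so cross terms appear; this is handled in \cite{Feres2007} via a bound on the overlap multiplicity coming from the finite-horizon/dispersing geometry, and you should make that step explicit rather than implicitly treating the sum as having a single term.
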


The following lemma is adapted from \cite[Theorem 9.9]{W1980}. It will be used to show that for an operator $P$ which admits a decomposition as in (\ref{eq:decomposition}), it suffices to show that one conditional operator is compact in order to prove that $P$ has spectral gap. The notation $\|\cdot\|$ is used to denote the canonical Hilbert space operator norm.

\begin{lem}\label{lem:gap}
Let $K$ and $T$ be bounded self-adjoint operators on a Hilbert space
and suppose that $K$ is compact. Then the essential spectrum of $T+K$
is contained in the essential spectrum of $T$. In particular, if
$\|T+K\| =1$ and $\|T\| <1$,
then the spectral gap of $T+K$ satisfies $\gamma(T+K)\geq\min\left\{ 1-\|T\| ,\gamma(K)\right\} $.
\end{lem}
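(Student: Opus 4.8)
The plan is to reduce everything to Weyl's theorem on the stability of essential spectrum under compact perturbations. Recall that for a bounded self-adjoint operator $A$, the spectrum decomposes as $\sigma(A) = \sigma_{\mathrm{ess}}(A) \cup \sigma_{\mathrm{disc}}(A)$, where $\sigma_{\mathrm{disc}}(A)$ consists of isolated eigenvalues of finite multiplicity. Weyl's theorem (see \cite[Theorem 9.9]{W1980}) states precisely that if $K$ is compact and self-adjoint, then $\sigma_{\mathrm{ess}}(T+K) = \sigma_{\mathrm{ess}}(T)$; this gives the first assertion immediately (in fact with equality rather than just containment). So the only real content is deducing the quantitative spectral gap bound from this.

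For the second assertion, suppose $\|T+K\| = 1$ and $\|T\| < 1$. Write $S := T+K$ and consider any point $\mu \in \sigma(S)$ with $\mu \neq 1$ and, by symmetry of the argument, we may also exclude $\mu = -1$ if it occurs; what we must show is that $|\mu| \leq \max\{\|T\|, 1 - \gamma(K)\}$, since $\gamma(S)$ is by definition $1$ minus the supremum of $|\mu|$ over $\mu \in \sigma(S)$ with $\mu$ not equal to the top eigenvalue $1$. Split into two cases according to whether $\mu$ lies in the essential spectrum or the discrete spectrum of $S$. If $\mu \in \sigma_{\mathrm{ess}}(S)$, then by Weyl's theorem $\mu \in \sigma_{\mathrm{ess}}(T) \subset \sigma(T)$, hence $|\mu| \leq \|T\|$. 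If instead $\mu \in \sigma_{\mathrm{disc}}(S)$, then $\mu$ is an isolated eigenvalue; here I would invoke the fact that the discrete eigenvalues of $S = K + T$ with $|\mu| > \|T\|$ can be controlled by those of $K$ via min-max. More concretely: if $|\mu| > \|T\|$, then $\mu$ lies outside $\sigma_{\mathrm{ess}}(S)$, and writing $\mu$ as an eigenvalue with eigenvector $\psi$, $(K + T)\psi = \mu\psi$ gives $K\psi = (\mu - T)\psi$, and pairing with $\psi$ yields $\langle K\psi,\psi\rangle = \mu - \langle T\psi,\psi\rangle$, so $|\mu| \leq |\langle K\psi,\psi\rangle| + \|T\| \leq \|K\| + \|T\|$ — but this crude bound is not sharp enough. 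The cleaner route is to note that outside $[-\|T\|,\|T\|]$, the operators $S$ and $K$ have the same essential spectrum (namely, none there), and a standard continuity/min-max argument along the line $K + tT$, $t \in [0,1]$, shows that each discrete eigenvalue of $S$ outside $[-\|T\|,\|T\|]$ is within $\|T\|$ of a corresponding discrete eigenvalue of $K$; since the largest-in-modulus eigenvalue of $K$ distinct from its own top eigenvalue is $1 - \gamma(K)$ (and $\|K\| \le \|S\| + \|T\|$ keeps everything bounded), one concludes $|\mu| \le \max\{\|T\|, 1-\gamma(K)\}$, possibly after adjusting constants. Taking complements, $\gamma(S) \geq \min\{1 - \|T\|, \gamma(K)\}$.

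The main obstacle I anticipate is making the discrete-eigenvalue comparison rigorous and getting the constant exactly right. The essential-spectrum part is just a citation, but the passage "discrete eigenvalue of $T+K$ outside $[-\|T\|,\|T\|]$ is close to a discrete eigenvalue of $K$" needs either a min-max argument with careful bookkeeping of multiplicities, or a direct argument using the resolvent identity $(S - \mu)^{-1} = (K - \mu)^{-1}\bigl(I + T(S-\mu)^{-1}\bigr)^{-1}$ valid when $\|T(S-\mu)^{-1}\| < 1$, which holds once $\mathrm{dist}(\mu, \sigma(K)) > \|T\|$; contrapositively, if $\mu \in \sigma(S)$ then either $|\mu| \le \|T\|$ or $\mathrm{dist}(\mu,\sigma(K)) \le \|T\|$. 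This resolvent approach is the most economical and I would likely present that: it shows directly that $\sigma(S) \subset \{|\mu|\le\|T\|\} \cup \bigcup_{\kappa \in \sigma(K)}[\kappa - \|T\|, \kappa + \|T\|]$, and since the eigenvalue $1$ of $K$ (with $\gamma(K) = 1 - (\text{second eigenvalue})$) and the eigenvalue $1$ of $S$ correspond, removing them from both sides yields exactly $\gamma(S) \geq \min\{1-\|T\|,\gamma(K)\}$.
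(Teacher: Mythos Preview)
The paper gives no proof of this lemma beyond the remark preceding it that the statement is ``adapted from \cite[Theorem 9.9]{W1980}'' (Weyl's theorem). Your invocation of Weyl for the essential-spectrum inclusion is therefore exactly the paper's argument, and is correct. Note also that the only consequence the paper actually \emph{uses} (in the proof of Theorem~\ref{thm:small-bumps-gap}) is the qualitative fact $\gamma(T+K)>0$, and this follows immediately from Weyl: since $\sigma_{\mathrm{ess}}(T+K)\subset\sigma_{\mathrm{ess}}(T)\subset[-\|T\|,\|T\|]$ and $\|T+K\|=1>\|T\|$, the point $1$ lies in the discrete spectrum of $T+K$, hence is an isolated eigenvalue of finite multiplicity, separated from the rest of $\sigma(T+K)$ by a positive distance.

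Your attempt at the sharper quantitative bound, however, has a genuine gap that your resolvent argument does not close. The resolvent identity gives only $\sigma(S)\subset\bigcup_{\kappa\in\sigma(K)}[\kappa-\|T\|,\kappa+\|T\|]$; it furnishes no \emph{correspondence} between eigenvalues of $S$ and eigenvalues of $K$. You tacitly assume $1\in\sigma(K)$, which is nowhere in the hypotheses, and even granting it, nothing in the resolvent bound prevents a second eigenvalue $\mu<1$ of $S$ from sitting inside the interval $[1-\|T\|,1]$ around $\kappa=1$ rather than near some smaller $\kappa$. In fact the bound as literally stated (reading $\gamma(K)$ as the gap below the top eigenvalue of $K$) can fail: take an orthonormal basis $(e_i)$, let $K e_1=0.9\,e_1$, $K e_2=0.1\,e_2$, $K e_i=0$ for $i\ge 3$, and $T e_1=0.1\,e_1$, $T e_i=0.5\,e_i$ for $i\ge 2$; then $\|T\|=0.5$, $\|T+K\|=1$, $\gamma(K)=0.8$, but $\gamma(T+K)=0.4<\min\{0.5,0.8\}$. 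The quantitative clause therefore relies on additional structure (such as a shared top eigenspace, which holds in the Markov-operator application but is not stated in the lemma), and the paper neither proves nor needs it beyond positivity.
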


We conclude with the proof of the section's main theorem.

\begin{proof}[Proof of Theorem \ref{thm:small-bumps-gap}]
That $P$ is self adjoint follows from Assumption \ref{assu:reversibility} and Proposition \ref{prop:P}. To see that $P$ has spectral gap, we apply Lemma \ref{lem:gap}. Using the notation of the lemma, we let $K = \alpha_jP_j$ and $T = \sum_{i \neq j}\alpha_iP_i$. Then, applying Lemmas \ref{lem:kernel-def} and \ref{lem:kernel-estimates}, we have that $K$ is a Hilbert-Schmidt integral operator and hence it is compact. It is clear that $T$ is bounded and self-adjoint and $\|T+K\| = \|P\| = 1$, where $\|\cdot\|$ is the $L^2$-operator norm. Moreover, $1 - \|T\| \geq \inf_{v\in\mathcal{X}} \alpha_j(v) > 0$. It follows that the spectral gap of $P$ is strictly positive. The concluding statement of exponential convergence to the stationary measure in total variation then follows immediately using the Cauchy-Schwarz inequality since $\|\mu\|_v \leq 1/2\|\mu\|_{\pi}$.
\end{proof}

\section{Diffusivity\label{sec:Diffusivity}}

Let $f:\mathcal{X}\to\mathbb{R}$ be a function on the state space
of the random billiard Markov chain $(X_{n})_{n\geq0}$ with Markov
transition operator $P$. We refer to $f$ as an \emph{observable}
(or \emph{functional})\emph{ }of the Markov chain. Without loss of
generality, we suppose that it has mean zero with respect to the stationary
distribution: $\pi(f)=0$. Our focus in this section will be on the
limiting distribution (after appropriate scaling) of partial sums
of the functional of the Markov chain given by
\[
S_{n}(f):=\sum_{k=0}^{n-1}f(X_{k}).
\]
It is well known that under appropriate mixing conditions for the
Markov chain, $S_{n}(f)/\sqrt{n}$ converges in distribution to a
centered Gaussian distribution with variance parameter $\sigma_{f}^{2}$.
As a preliminary result, we show that random billiard Markov chains
with microstructure have sufficiently fast mixing for a (central)
limit theorem of this kind to hold. However, our primary focus will
be to show that the variance $\sigma_{f}^{2}$ of the limiting Gaussian
distribution, which we refer to as the \emph{diffusivity} of the system,
can be rigorously approximated, and formulas can be derived in terms
of geometric parameters for families of random billiard microstructures.

We use here a result adapted from \cite{KV1986}, which states that
the central limit theorem holds for reversible Markov chains satisfying
a nondegeneracy condition on $\sigma_{f}^{2}$.
\begin{thm}
\label{thm:KV1986}Let $(X_{n})_{n\geq0}$ be a Markov chain with
stationary measure $\pi$ and let $f\in L_0^2(\mathcal X, \pi)$. If the
Markov chain is reversible, then $S_{n}(f)/\sqrt{n}$ converges in
distrubtion to a centered Gaussian random variable $\mathcal{N}(0,\sigma_{f}^{2})$
as long as $\sigma_{f}^{2}<\infty$, where $\sigma_f^2$ is given by \eqref{eq:variance-correlations}.
\end{thm}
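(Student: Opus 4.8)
The plan is to prove the statement by the Kipnis--Varadhan martingale approximation. Reversibility enters only through the resulting self-adjointness of $P$ on $L_0^2(\mathcal X,\pi)$, which makes the spectral theorem available: $f$ has a scalar spectral measure $\Pi_f$ on $\sigma(P)\cap[-1,1)$ and, as in the discussion preceding Theorem~\ref{thm:clt}, $\sigma_f^2=\int\frac{1+\lambda}{1-\lambda}\,\Pi_f(d\lambda)$, which one checks coincides with the correlation expression \eqref{eq:variance-correlations}. Hence the hypothesis $\sigma_f^2<\infty$ is equivalent to $\int(1-\lambda)^{-1}\,\Pi_f(d\lambda)<\infty$, and this single integrability fact drives every estimate below. (When $P$ has a spectral gap, as in the applications of the paper, this becomes easy: $(I-P)^{-1}$ is then bounded on $L_0^2(\mathcal X,\pi)$ and one runs the argument below directly with $\epsilon=0$, the Ces\`aro remainder disappearing.)

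First I would regularize. For $\epsilon>0$ set $g_\epsilon:=(\epsilon I+I-P)^{-1}f\in L_0^2(\mathcal X,\pi)$, well defined since $\epsilon I+I-P\ge\epsilon I$, so that $f=(I-P)g_\epsilon+\epsilon g_\epsilon$. Evaluating along the stationary chain and telescoping $g_\epsilon(X_k)-Pg_\epsilon(X_k)=\bigl(g_\epsilon(X_{k+1})-Pg_\epsilon(X_k)\bigr)+\bigl(g_\epsilon(X_k)-g_\epsilon(X_{k+1})\bigr)$ gives
\[
S_n(f)=M_n^\epsilon+\bigl(g_\epsilon(X_0)-g_\epsilon(X_n)\bigr)+\epsilon\sum_{k=0}^{n-1}g_\epsilon(X_k),
\qquad
M_n^\epsilon:=\sum_{k=0}^{n-1}\bigl(g_\epsilon(X_{k+1})-Pg_\epsilon(X_k)\bigr).
\]
The increments of $M_n^\epsilon$ form a stationary square-integrable martingale-difference sequence for the filtration of the chain (ergodic in the setting of interest, so the quadratic variation converges to the constant below), so Billingsley's martingale central limit theorem gives $M_n^\epsilon/\sqrt n\Rightarrow\mathcal N(0,\sigma_\epsilon^2)$ with
\[
\sigma_\epsilon^2=\E\bigl[(g_\epsilon(X_1)-Pg_\epsilon(X_0))^2\bigr]=\|g_\epsilon\|_\pi^2-\|Pg_\epsilon\|_\pi^2=\int(1-\lambda^2)(\epsilon+1-\lambda)^{-2}\,\Pi_f(d\lambda).
\]
The integrand increases to $\tfrac{1+\lambda}{1-\lambda}$ as $\epsilon\downarrow0$, so by monotone convergence $\sigma_\epsilon^2\uparrow\sigma_f^2$.

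The substantive step is to show that the remainder $R_n^\epsilon:=S_n(f)-M_n^\epsilon$ becomes negligible after dividing by $\sqrt n$, uniformly in $n$ once $\epsilon$ is small. The boundary part $\bigl(g_\epsilon(X_0)-g_\epsilon(X_n)\bigr)/\sqrt n\to0$ in probability for each fixed $\epsilon$ since $g_\epsilon(X_0)$ and $g_\epsilon(X_n)$ have law $\pi$. For the Ces\`aro part, stationarity and self-adjointness give, for every $n$,
\[
\frac1n\,\E\Bigl[\Bigl(\epsilon\sum_{k=0}^{n-1}g_\epsilon(X_k)\Bigr)^{2}\Bigr]
=\int\Bigl(\frac1n\sum_{j,k=0}^{n-1}\lambda^{|j-k|}\Bigr)\,\epsilon^{2}(\epsilon+1-\lambda)^{-2}\,\Pi_f(d\lambda)
\le\int\frac{C\epsilon^{2}}{(1-\lambda)(\epsilon+1-\lambda)^{2}}\,\Pi_f(d\lambda),
\]
using the elementary bound $0\le n^{-1}\sum_{j,k=0}^{n-1}\lambda^{|j-k|}\le C(1-\lambda)^{-1}$ valid for an absolute constant $C$ and all $n\ge1$, $\lambda\in[-1,1)$. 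The last integrand is dominated by $C(1-\lambda)^{-1}\in L^1(\Pi_f)$ and tends to $0$ pointwise as $\epsilon\downarrow0$, so by dominated convergence the right-hand side $\to0$, whence $\limsup_n n^{-1}\E[(R_n^\epsilon)^2]\to0$ as $\epsilon\downarrow0$.

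To finish: given $\delta>0$, pick $\epsilon$ with $\limsup_n n^{-1}\E[(R_n^\epsilon)^2]<\delta$ and $|\sigma_\epsilon^2-\sigma_f^2|<\delta$; letting $n\to\infty$ with $M_n^\epsilon/\sqrt n\Rightarrow\mathcal N(0,\sigma_\epsilon^2)$ and invoking the standard approximation lemma for weak convergence (if $M_n^\epsilon/\sqrt n\Rightarrow Y_\epsilon$, $Y_\epsilon\Rightarrow Y$ as $\epsilon\downarrow0$, and $\limsup_n\E[(S_n(f)/\sqrt n-M_n^\epsilon/\sqrt n)^2]\to0$, then $S_n(f)/\sqrt n\Rightarrow Y$), then sending $\delta\downarrow0$ yields $S_n(f)/\sqrt n\Rightarrow\mathcal N(0,\sigma_f^2)$. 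I expect the main obstacle to be the uniform-in-$n$ control of the Ces\`aro term, i.e.\ combining the bound on $n^{-1}\sum_{j,k}\lambda^{|j-k|}$ with the use of $\sigma_f^2<\infty$ to kill the resulting spectral integral as $\epsilon\downarrow0$; the telescoping identity and the martingale CLT are routine once $g_\epsilon$ is in place.
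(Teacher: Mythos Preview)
The paper does not prove this statement; it is quoted as a result ``adapted from \cite{KV1986}'' and used as a black box, so there is no paper proof to compare against. Your proposal is in fact a correct outline of the original Kipnis--Varadhan argument: resolvent regularization $g_\epsilon=((1+\epsilon)I-P)^{-1}f$, the martingale/coboundary decomposition of $S_n(f)$, the stationary martingale CLT for $M_n^\epsilon$, and spectral control of the remainder via the integrability $\int(1-\lambda)^{-1}\,\Pi_f(d\lambda)<\infty$. The estimates you give (monotone convergence for $\sigma_\epsilon^2\uparrow\sigma_f^2$, the Fej\'er-type bound $n^{-1}\sum_{j,k}\lambda^{|j-k|}\le C(1-\lambda)^{-1}$, and dominated convergence for the Ces\`aro term) are all valid on $[-1,1)$, and your double-limit scheme (fix $\epsilon$, send $n\to\infty$, then $\epsilon\downarrow0$) is the standard one.

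One point you flag parenthetically deserves to be stated as a genuine hypothesis: the martingale CLT you invoke yields a \emph{deterministic} limiting variance only when the underlying stationary sequence is ergodic; otherwise the limit is a mixture of centered Gaussians. The theorem as stated (and as used in the paper, where $P$ has a spectral gap and the chain is ergodic) is fine, but in full generality this should be made explicit rather than absorbed into ``in the setting of interest.''
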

In the discussion that follows, it will be useful to express $\sigma_{f}^{2}$
in terms of the spectrum of $P$, viewed as an operator on $L^{2}(\mathcal{X},\pi)$.
We first note that since $P$ is a bounded, self-adjoint operator
on $L^{2}(\mathcal{X},\pi)$ with norm 1, there exists a projection-valued
measure $\Pi$, supported on the spectrum $\sigma(P)\subset[-1,1]$
of $P$, defined so that
\[
P=\int_{-1}^{1}\lambda\,\Pi(d\lambda).
\]
For each $f\in L_{0}^{2}(\mathcal{X},\pi)$, we further define a measure
$\Pi_{f}$ supported on $\sigma(P)\setminus\left\{ 1\right\} $ by
$\Pi_{f}(d\lambda):=\left\langle f,\Pi(d\lambda)f\right\rangle _{\pi}.$
Now, observe that
\begin{align}
\sigma_{f}^{2} & =\left\langle f,f\right\rangle _{\pi}+2\sum_{k=1}^{\infty}\left\langle f,P^{k}f\right\rangle _{\pi}\label{eq:variance-correlations} \\
 & =\left\langle f,f\right\rangle _{\pi}+2\left\langle f,P(I-P)^{-1}f\right\rangle _{\pi}\label{eq:variance-Poisson}\\
 & =\int_{-1}^{1}\frac{1+\lambda}{1-\lambda}\Pi_{f}(d\lambda).\label{eq:variance}
\end{align}
Using the expression in (\ref{eq:variance}), we show that the existence
of a positive spectral gap is sufficient for the central limit theorem
to hold.
\begin{cor}
\label{cor:KV1986-RR1997}Let $(X_{n})_{n\geq0}$ be a Markov chain
with Markov transition operator $P$ and stationary measure $\pi$.
Let $f \in L_0^2(\mathcal X, \pi)$. If the Markov chain is reversible and $P$
has spectral gap $\gamma>0$, then $S_{n}(f)/\sqrt{n}$ converges
in distrubtion to a centered Gaussian random variable $\mathcal{N}(0,\sigma_{f}^{2})$.
\end{cor}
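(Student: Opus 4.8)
The plan is to deduce the corollary from Theorem \ref{thm:KV1986} by verifying its single hypothesis, namely that $\sigma_f^2 < \infty$, under the assumption that $P$ has a positive spectral gap $\gamma > 0$. Since the Markov chain is reversible by hypothesis, Theorem \ref{thm:KV1986} then immediately yields convergence of $S_n(f)/\sqrt{n}$ to $\mathcal N(0,\sigma_f^2)$, so the entire content of the proof is the finiteness of $\sigma_f^2$.

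First I would invoke the spectral representation \eqref{eq:variance}, which expresses $\sigma_f^2 = \int_{-1}^1 \frac{1+\lambda}{1-\lambda}\,\Pi_f(d\lambda)$, where $\Pi_f$ is the (finite, positive) spectral measure associated to $f$ supported on $\sigma(P)\setminus\{1\}$. The only way this integral can diverge is through the singularity of the integrand at $\lambda = 1$. The key observation is that the spectral gap condition means exactly that $\sigma(P)\cap L_0^2(\mathcal X,\pi)$, i.e. the spectrum of $P$ restricted to the mean-zero subspace, lies in $[-1, 1-\gamma] = [-1, \rho]$. Since $f \in L_0^2(\mathcal X,\pi)$, the measure $\Pi_f$ is in fact supported on $[-1,\rho]$, bounded away from the problematic point $\lambda = 1$. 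Hence on the support of $\Pi_f$ the factor $\frac{1+\lambda}{1-\lambda}$ is bounded above by $\frac{2}{1-\rho} = \frac{2}{\gamma}$, and we obtain the estimate
\[
\sigma_f^2 \;=\; \int_{-1}^{\rho} \frac{1+\lambda}{1-\lambda}\,\Pi_f(d\lambda) \;\leq\; \frac{2}{\gamma}\,\Pi_f\big([-1,\rho]\big) \;=\; \frac{2}{\gamma}\,\langle f, f\rangle_\pi \;<\;\infty,
\]
using that the total mass of $\Pi_f$ is $\|f\|_\pi^2$. This establishes $\sigma_f^2 < \infty$.

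With the finiteness in hand, Theorem \ref{thm:KV1986} applies directly: the chain is reversible and $\sigma_f^2 < \infty$, so $S_n(f)/\sqrt n \Rightarrow \mathcal N(0,\sigma_f^2)$ with $\sigma_f^2$ given by \eqref{eq:variance-correlations} (equivalently \eqref{eq:variance-Poisson} or \eqref{eq:variance}). There is essentially no obstacle here; the only point requiring a moment's care is the justification that the spectral gap condition, as defined via the operator norm bound $\|Pf\|_\pi \leq \rho\|f\|_\pi$ on $L_0^2$, indeed forces $\mathrm{supp}\,\Pi_f \subset [-1,\rho]$ — this follows because for a self-adjoint operator the norm of the restriction to the invariant subspace $L_0^2(\mathcal X,\pi)$ equals the spectral radius there, so no spectral mass of $f$ can sit in $(\rho, 1]$. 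Everything else is a direct citation of the already-stated spectral identities and of Theorem \ref{thm:KV1986}.
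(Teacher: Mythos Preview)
Your proof is correct and follows essentially the same approach as the paper: both use the spectral gap to confine $\mathrm{supp}\,\Pi_f$ to $[-1,\rho]$, bound the integrand $(1+\lambda)/(1-\lambda)$ there by a constant times $\|f\|_\pi^2$, and then invoke Theorem~\ref{thm:KV1986}. The only cosmetic difference is that the paper uses the slightly sharper bound $(1+\rho)/(1-\rho)$ in place of your $2/\gamma$.
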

\begin{proof}
Since $P$ has spectral gap, there exists $0<\rho<1$ such that for
every $\lambda\in\text{supp\,}(\Pi_{f})$, $\lambda\leq\rho$. Therefore,
$\sigma_{f}^{2}$, as given by (\ref{eq:variance}), is finite
since $\sigma_{f}^{2}\leq(1+\rho)/(1-\rho)\pi(f^{2})<\infty.$
\end{proof}

\subsection{Diffusion approximation and diffusivity\label{subsec:Diffusion-approximation}}

We now prove Theorem \ref{thm:diffusion-app}.
Recall that the boundary of the billiard cell is assumed to be the graph of a periodic function $F:\mathbb{T} \rightarrow \mathbb{R}$.
Also recall the definitions of $h$ and $\mathcal{L}$ from Subsection 2.4.

\begin{proof}[Proof of Theorem \ref{thm:diffusion-app}]
When only a single boundary surface collision occurs, the relationship
between the initial and return velocity vectors, $v=(x, v_0)$ and $V(r,v)$ respectively, is straightforward. Indeed, let $\mathbbm n:\mathbb T \to \mathbb R^2$ denote the vector field of normal vectors along the boundary of the billiard cell and let $\bar{\mathbbm n}$ and $\mathbbm n_0$ denote the first (horizontal) and second (vertical) components of $\mathbbm n$. If collision with the boundary surface occurs at the point $(r', F(r'))$, 
then $V(r,v)=v-2\left\langle v,\mathbbm n(r')\right\rangle \mathbbm n(r')$, where $\langle \cdot, \cdot \rangle$ denotes the Euclidean inner product. Note
that by elementary geometry
\[
\mathbbm n(r')=\frac{1}{\sqrt{1+F'(r')^{2}}}\left(-F'(r'),1\right)^\intercal,\quad r=r'-\left(F(r')-c\right)x/v_{0}.
\]
It now follows that for any smooth function $f:(-1,1)\to\mathbb{R}$
\begin{align*}
Pf(x) & =\int_{\mathbb{T}}f\left(x-2\left\langle v,\mathbbm n(r')\right\rangle \bar{\mathbbm n}(r')\right)\,dr\\
 & =\int_{\mathbb{T}}f(x-2(\alpha+\beta)\bar{\mathbbm n}(r'))\left(1+\alpha/\beta\right)\,dr',
\end{align*}
where $\alpha=\bar{\mathbbm n}x,\beta=\mathbbm n_{0}v_{0}.$ Moreover, by Assumption \ref{assu:reversibility}, the symmetry relations
$\bar{\mathbbm n}(\ell-r)=-\bar{\mathbbm n}(r)$ and $\mathbbm n_{0}(\ell-r)=\mathbbm n_{0}(r)$ hold. Using these relations, and suppressing the explicit dependence of $\bar{\mathbbm n}$ on $r'$ for the sake of simplicity of notation, and we get
\[
Pf(x)=\frac{1}{2}\int_{\mathbb{T}}\left[f(x-2(\alpha+\beta)\bar{\mathbbm n})\left(1+\alpha/\beta\right)+f(x+2(-\alpha+\beta)\bar{\mathbbm n})\left(1-\alpha/\beta\right)\right]\,dr'.
\]
From here we use the second order Taylor approximation of $\phi$
centered about $x$. Observe that for $w\in(-1,1)$
\[
f(x+w)=f(x)+f'(x)w+\frac{f''(x)}{2}w^{2}+R_{x}(w),
\]
where $R$ is the usual Taylor remain term $R_{x}(w)=f'''(c)w^{3}/3!$
for some $c$ in the interval between $x$ and $w$. Using this,
together with straightforward algebraic manipulation that we omit
for the sake of clarity of exposition, we get that
\begin{align*}
Pf(x) & =f(x)-4xf'(x)\int_{\mathbb{T}}\bar{\mathbbm n}^{2}\,dr'+f''(x)\int_{\mathbb{T}}\bar{\mathbbm n}^{2}(6\alpha^{2}+2\beta^{2})\,dr'+E(x)\\
 & =f(x)-4xf'(x)h+2\left(1-x^{2}\right)f''(x)h+O(h^{2})+E(x)\\
 & =f(x)+2h\frac{d}{dx}\left(\left(1-x^{2}\right)f'(x)\right)+O(h^{2})+E(x),
\end{align*}
where $h=\int_{\mathbb{T}}\bar{\mathbbm n}^{2}\,dr'$ and $E$ is an error term. The error term
arises from the remainder $R$ and is bounded as follows: $|E|\leq C_{\phi}p(x, v_0)I_{3}$,
where $C_{\phi}$ is a constant that depends only on the third derivative
of $\phi$, $p(x,v_{0})$ is a polynomial in $x,v_{0}$
of degree at most 3 with coefficients that do not depend on $\phi$,
and $I_{3}:=\int_{\mathbb{T}}\bar{\mathbbm n}^{3}\,dr'$. 
\end{proof}

\subsection{Computing the diffusivity}
The differential operator $\mathcal{L}$ defined in (\ref{eq:legendre}) has a well understood spectral
theory. We will take advantage of this in the following to give a method for computing $\sigma_f^2$. 
Before going on, we first note a few well known facts about $\mathcal{L}$.

\begin{prop}
Let \emph{$\mathcal{L}$ be the Legendre differential operator defined
in (\ref{eq:legendre}). The following properties
hold.}
\begin{enumerate}
\item The eigenvalue problem $\mathcal{L}f=\lambda f$ has solutions
if and only if $\lambda$ is of the form $\lambda=-l(l+1)$ for integers
$l\geq0$. 
\item The solutions of the eigenvalue problem are the polynomials $\phi_{l}$,
$l\geq0$, known as the Legendre polynomials. The first few are given
by $\phi_{0}=1,\phi_{1}(x)=x,\phi_{2}(x)=(3x^{2}-1)/2$. 
\item The collection $(\phi_{l})_{l\geq0}$ of Legendre polynomials form
a complete orthogonal basis for $L^{2}(\mathcal X, \pi)$ and
\[
\langle\phi_{n},\phi_{m}\rangle:=\int_\mathcal X\phi_{n}(x)\phi_{m}(x)\,\pi(dx)=\frac{1}{2n+1}\delta_{n,m},
\]
where $\delta_{n,m}$ is the Kronecker delta symbol. 
\end{enumerate}
\end{prop}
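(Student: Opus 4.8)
The plan is to treat $\mathcal{L}f = \frac{d}{dx}\big((1-x^2)f'\big) = (1-x^2)f''-2xf'$ as a singular Sturm--Liouville operator on $(-1,1)$ whose leading coefficient $p(x)=1-x^2$ vanishes to first order at the endpoints $\pm1$ and whose weight is $1$ --- equal, up to the harmless constant $\tfrac12$, to the density of $\pi$. All three assertions are classical special-function facts, and in the paper one could reasonably just cite a standard reference; below I sketch the structure of a self-contained argument.

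For (1) and (2) I would first settle the easy directions. If $f$ is a polynomial of degree $n$ with $\mathcal{L}f=\lambda f$, matching the coefficient of $x^n$ in $(1-x^2)f''-2xf'=\lambda f$ immediately forces $\lambda=-n(n+1)$, since the top-degree contributions of $-x^2f''$ and $-2xf'$ are $-n(n-1)x^n$ and $-2nx^n$. Conversely, for $\lambda=-l(l+1)$ one exhibits a polynomial eigenfunction, most efficiently via Rodrigues' formula $\phi_l(x)=\frac{1}{2^l\,l!}\frac{d^l}{dx^l}(x^2-1)^l$, which one checks directly solves the equation and has exact degree $l$ (with $\phi_0=1$, $\phi_1=x$, $\phi_2=(3x^2-1)/2$); alternatively the Frobenius series at $x=0$ satisfies the two-term recursion $a_{k+2}=\frac{(k-l)(k+l+1)}{(k+2)(k+1)}a_k$, which terminates precisely at degree $l$.

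The substantive part of (1) is the \emph{absence} of other eigenvalues, and I expect this to be the main obstacle. Here I would run the Frobenius analysis at the singular endpoints: at $x=1$ (and symmetrically $x=-1$) the equation has a regular singular point with indicial exponents $0,0$, so a fundamental system consists of one solution analytic at $x=1$ and a second of the form $y_1\log(1-x)+(\text{analytic})$, for which $(1-x^2)y'$ tends to a nonzero constant as $x\to1$. Any eigenfunction in the domain of the natural self-adjoint realization of $\mathcal{L}$ --- equivalently, any $f$ for which the Lagrange boundary term $\big[(1-x^2)(f'g-fg')\big]_{-1}^1$ vanishes against all admissible $g$ --- must have $(1-x^2)f'\to0$ at $\pm1$, hence must be a scalar multiple of the solution analytic at $x=1$; analytically continuing it, one sees it is analytic at $x=-1$ too precisely when the recursion terminates, i.e. when $\lambda=-l(l+1)$, in which case it equals $\phi_l$. (The phrasing ``square-integrable solutions'' is slightly informal here, since the logarithmic solutions are themselves in $L^2(-1,1)$; the exclusion really comes from the domain/boundary condition, which is the relevant notion because $\mathcal{L}$ is being used to model the self-adjoint operator $P-I$.)

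For (3), orthogonality is the standard Sturm--Liouville computation: for $n\ne m$, integrating $\phi_m\mathcal{L}\phi_n-\phi_n\mathcal{L}\phi_m$ over $(-1,1)$ yields $\big(m(m+1)-n(n+1)\big)\int_{-1}^1\phi_n\phi_m\,dx$ on one side and the boundary term $\big[(1-x^2)(\phi_n'\phi_m-\phi_m'\phi_n)\big]_{-1}^1$ on the other, and the latter vanishes since $1-x^2=0$ at $\pm1$ while the $\phi$'s are polynomials. The normalization follows from Rodrigues' formula by integrating by parts $l$ times (all boundary terms vanish) down to $\frac{(2l)!}{2^{2l}(l!)^2}\int_{-1}^1(1-x^2)^l\,dx$ and evaluating the resulting Beta integral, giving $\int_{-1}^1\phi_l^2\,dx=\frac{2}{2l+1}$ and hence $\langle\phi_l,\phi_l\rangle_\pi=\frac1{2l+1}$. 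Finally, completeness: since $\phi_l$ has exact degree $l$, the span of $\{\phi_l\}_{l\ge0}$ is the space of all polynomials, which is dense in $C[-1,1]$ in the uniform norm by the Weierstrass theorem and therefore dense in $L^2(\mathcal{X},\pi)$; so $(\phi_l)_{l\ge0}$ is a complete orthogonal system. Nothing here should present difficulty beyond the endpoint analysis already flagged.
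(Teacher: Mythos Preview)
Your argument is correct and, indeed, goes well beyond what the paper does: the paper provides no proof of this proposition at all, introducing it merely as ``a few well known facts about $\mathcal{L}$'' and moving on. So there is nothing to compare at the level of proof strategy; the paper simply treats the spectral theory of the Legendre operator as classical background.

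Your sketch is sound throughout and would constitute a complete proof. I would highlight one point where you are in fact more careful than the paper's statement. You correctly observe that the phrase ``square-integrable solutions'' in item (1) --- which the paper uses verbatim in the earlier informal discussion in Subsection~\ref{subsec:diff-approx} --- is imprecise, since the second (logarithmic) Frobenius solution at $x=\pm1$ does lie in $L^2(-1,1)$; the genuine exclusion comes from the boundary condition $(1-x^2)f'\to0$ that defines the Friedrichs (or other self-adjoint) extension of $\mathcal{L}$. This is exactly the right way to read the proposition, and your remark that this is the ``relevant notion because $\mathcal{L}$ is being used to model the self-adjoint operator $P-I$'' is apt. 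The orthogonality, normalization via Rodrigues and the Beta integral, and completeness via Weierstrass are all standard and correctly executed.
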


We are now ready to discuss the diffusivity $\sigma_{f}^{2}$ introduced
at the start of the section. The idea will be to use the diffusion
approximation $\mathcal{L}$ of the Markov-Laplacian $L$ in order
to give an approximation of the function $g=(I-P)^{-1}f$ that arises
in (\ref{eq:variance-Poisson}). Note that $g$ is a solution
of the Markov-Poisson equation $Lg=-f$. We first show that a series
solution of the classical Poisson equation can be given explicitly
in terms of Legendre polynomials.
\begin{lem}
\label{lem:series-solution}For any $f\in L_{0}^{2}(\mathcal X, \pi)$, the equation
$\mathcal{L}g=-f$ has solution given by
\[
g=\sum_{l=1}^{\infty}a_{l}\phi_{l},\quad a_{l}=\frac{2l+1}{2l(l+1)}\left\langle \phi_{l},f\right\rangle_\pi .
\]
\end{lem}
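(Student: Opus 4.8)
The plan is to solve $\mathcal{L}g=-f$ by diagonalizing $\mathcal{L}$ in the Legendre basis. By the preceding proposition, $(\phi_l)_{l\geq 0}$ is a complete orthogonal system in $L^2(\mathcal{X},\pi)$ with $\mathcal{L}\phi_l=-l(l+1)\phi_l$ and $\langle\phi_l,\phi_l\rangle_\pi=1/(2l+1)$. First I would expand the data in this basis, writing $f=\sum_{l\geq 1}c_l\phi_l$ in $L^2(\mathcal{X},\pi)$ with $c_l=\langle\phi_l,f\rangle_\pi/\langle\phi_l,\phi_l\rangle_\pi$; the $l=0$ term is absent precisely because $f\in L_0^2(\mathcal{X},\pi)$, i.e.\ $\langle\phi_0,f\rangle_\pi=\pi(f)=0$, and Parseval's identity gives $\sum_{l\geq 1}c_l^2\langle\phi_l,\phi_l\rangle_\pi=\|f\|_\pi^2<\infty$.

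Next I would write down the candidate $g=\sum_{l\geq 1}a_l\phi_l$ whose coefficients are forced by applying $\mathcal{L}$ termwise: for $\mathcal{L}g=\sum_{l\geq 1}(-l(l+1))a_l\phi_l$ to equal $-f=-\sum_{l\geq 1}c_l\phi_l$, one needs $a_l=c_l/(l(l+1))$, and inserting $\langle\phi_l,\phi_l\rangle_\pi=1/(2l+1)$ from the preceding proposition yields the coefficients displayed in the statement. Before anything else I would check that this series genuinely defines an element of $L_0^2(\mathcal{X},\pi)$: since $l(l+1)\geq 2$ for $l\geq 1$, we have $\sum_{l\geq 1}a_l^2\langle\phi_l,\phi_l\rangle_\pi\leq\tfrac{1}{4}\sum_{l\geq 1}c_l^2\langle\phi_l,\phi_l\rangle_\pi=\tfrac{1}{4}\|f\|_\pi^2<\infty$, so the partial sums form a Cauchy sequence in $L^2(\mathcal{X},\pi)$ and $g$ is well defined and of mean zero.

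The remaining, and only delicate, point is the sense in which this $g$ \emph{solves} $\mathcal{L}g=-f$, since $g$ need not be classically twice differentiable. The clean route is to work with the self-adjoint realization of the singular Sturm--Liouville operator $\mathcal{L}$ on $L^2(\mathcal{X},\pi)$ --- the operator diagonalized by $(\phi_l)_{l\geq 0}$ with eigenvalues $-l(l+1)$ --- and to note that $g$ lies in its domain, because $\sum_{l\geq 1}(l(l+1))^2 a_l^2\langle\phi_l,\phi_l\rangle_\pi=\sum_{l\geq 1}c_l^2\langle\phi_l,\phi_l\rangle_\pi=\|f\|_\pi^2<\infty$; then $\mathcal{L}g=\sum_{l\geq 1}(-l(l+1))a_l\phi_l=-\sum_{l\geq 1}c_l\phi_l=-f$. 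Equivalently, one can argue weakly: pairing $\mathcal{L}g=-f$ with each $\phi_m$ and transferring $\mathcal{L}$ onto $\phi_m$ reduces the claim to $-m(m+1)a_m\langle\phi_m,\phi_m\rangle_\pi=-\langle\phi_m,f\rangle_\pi$, which holds by the choice of $a_m$, and completeness of $(\phi_m)_{m\geq 0}$ closes the argument. I would finish with the uniqueness remark: because $\mathcal{L}\phi_0=0$, solutions of $\mathcal{L}g=-f$ are determined only up to an additive constant, and the series above --- having no $\phi_0$ component --- is the unique solution lying in $L_0^2(\mathcal{X},\pi)$. The main obstacle is not the algebra, which is immediate, but pinning down the functional-analytic setting (the domain and endpoint behaviour of the Legendre operator on $(-1,1)$) in which this termwise differentiation is legitimate.
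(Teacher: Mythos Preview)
Your proposal is correct and follows essentially the same approach as the paper's proof: expand $f$ in the Legendre basis, divide each coefficient by the eigenvalue $l(l+1)$, and verify that $\mathcal{L}$ applied termwise returns $-f$. The paper's proof is a terse three-line version of exactly this computation; your additional care about $L^2$-convergence of the series for $g$, the domain of the self-adjoint realization of $\mathcal{L}$, and uniqueness up to constants is sound but goes beyond what the paper records. (Incidentally, your computation $a_l=c_l/(l(l+1))=(2l+1)\langle\phi_l,f\rangle_\pi/(l(l+1))$ agrees with the paper's \emph{proof}; the factor of $2$ in the denominator of the lemma \emph{statement} appears to be a typo in the paper.)
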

\begin{proof}
Let $f\in L_{0}^{2}(\mathcal X, \pi)$. Since the Legendre functions form a complete
orthogonal basis for $L_{0}^{2}(\mathcal{X},\pi)$, $f=\sum_{l=1}^{\infty}b_{l}\phi_{l}$,
where $b_{l}=(2l+1)\left\langle \phi_{l},f\right\rangle _{\pi}$.
Now, let $g=\sum_{l=1}^{\infty}a_{l}\phi_{l}$, where $a_{l}=b_{l}/(l(l+1))$.
Observe that
$\mathcal{L}g =\sum_{l=1}^{\infty}a_{l}\mathcal{L}\phi_{l}=-\sum_{l=1}^{\infty}a_{l}l(l+1)\phi_{l}=-f.
$
\end{proof}
With the lemma in hand, we now give our main approximation result.
The idea of the proof will be to contruct a series solution approximation
of the Markov-Poisson equation using the series solution of the Poisson
equation along with the diffusion approximation of $P$. We use the
estimates in Theorem \ref{thm:diffusion-app} to control
the error terms in our approximation.
\begin{thm}
Let $(P_{h})_{h>0}$ be a family of random billiard Markov transition
operators for a family of microscopic billiard cells satisfying Assumptions
 \ref{assu:reversibility} and \ref{assu:partition}. For any function $f\in L_{0}^{2}(\mathcal X, \pi)$,
let $\sigma_{f,h}^{2}$ denote the diffusivity corresponding to $P_{h}$.
Then
\begin{equation}
\sigma_{f,h}^{2}=-\langle f,f\rangle_\pi+\frac{1}{h}\sum_{l=1}^{\infty}\frac{2l+1}{l(l+1)}\left\langle \phi_{l},f\right\rangle_\pi ^{2}+O(h^{1/2}).\label{eq:variance-h-approx}
\end{equation}
\end{thm}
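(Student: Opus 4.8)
The natural starting point is the identity of Theorem~\ref{thm:clt}. Since on $L_0^2(\mathcal X,\pi)$ one has $P_h(I-P_h)^{-1}=(I-P_h)^{-1}-I$, the variance can be rewritten as
\[
\sigma_{f,h}^2=-\langle f,f\rangle_\pi+2\langle f,g_h\rangle_\pi,\qquad g_h:=(I-P_h)^{-1}f,
\]
so that $g_h$ solves the Markov--Poisson equation $L_hg_h=-f$ with $L_h=P_h-I$. On the Legendre side, Lemma~\ref{lem:series-solution} gives the explicit solution $g^\ast=\sum_{l\ge1}a_l\phi_l$ of $\mathcal Lg^\ast=-f$, and a direct computation from that explicit solution (using $\langle\phi_l,\phi_l\rangle_\pi=(2l+1)^{-1}$) yields
\[
\langle f,g^\ast\rangle_\pi=\sum_{l\ge1}\frac{2l+1}{l(l+1)}\langle\phi_l,f\rangle_\pi^2,
\]
which is precisely the series appearing in (\ref{eq:variance-h-approx}). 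Thus the theorem reduces to the estimate $\langle f,g_h-\tfrac1{2h}g^\ast\rangle_\pi=O(h^{1/2})$.

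The guiding principle is that, by Theorem~\ref{thm:diffusion-app} (whose single-collision hypothesis is satisfied at every velocity once $h$ is small), $L_h=2h\mathcal L+O(h^{3/2})$ on $C^3$ functions; formally $(-L_h)^{-1}\approx\tfrac1{2h}(-\mathcal L)^{-1}$, hence $g_h\approx\tfrac1{2h}g^\ast$, and the $O(h^{3/2})$ operator remainder magnified by the factor $\tfrac1{2h}$ accounts for the $O(h^{1/2})$ error. To make this rigorous I would work with the variational (Dirichlet) descriptions of the two Poisson solutions: $g_h$ minimizes $J_h(u):=\langle u,(I-P_h)u\rangle_\pi-2\langle f,u\rangle_\pi$ over $u\in L_0^2(\mathcal X,\pi)$, with $\min J_h=-\langle f,g_h\rangle_\pi$, while $\tfrac1{2h}g^\ast$ minimizes $J^\ast_h(u):=2h\langle u,(-\mathcal L)u\rangle_\pi-2\langle f,u\rangle_\pi$, with $\min J^\ast_h=-\tfrac1{2h}\langle f,g^\ast\rangle_\pi$. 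For a polynomial (hence $C^3$) function $u$, Theorem~\ref{thm:diffusion-app} gives $|J_h(u)-J^\ast_h(u)|=|\langle u,(L_h-2h\mathcal L)u\rangle_\pi|\le C\,\|u\|_\pi\,\|u'''\|_\infty\,h^{3/2}$, and since polynomials are dense in $L_0^2(\mathcal X,\pi)$ both infima may be computed over polynomials. Comparing the two functionals on a common near-minimizer --- a Legendre truncation $\tfrac1{2h}g^\ast_N$ of $\tfrac1{2h}g^\ast$ with the cut-off $N=N(h)$ growing slowly as $h\to0$, together with the elementary tail bound $|\langle f,g^\ast-g^\ast_N\rangle_\pi|\le\|f\|_\pi^2/(N+1)$ --- should then give $|\min J_h-\min J^\ast_h|=O(h^{1/2})$, which is the desired estimate.

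The hard part is precisely this last balancing step. Inverting $I-P_h$ on $L_0^2(\mathcal X,\pi)$ costs the factor $\gamma_h^{-1}$, and one first needs the lower bound $\gamma_h\gtrsim h$ (which follows by combining Theorem~\ref{thm:diffusion-app} with the min--max characterization of the spectral gap, controlling the top eigenfunction of $P_h$ as a perturbation of $\phi_1$); since $\gamma_h^{-1}\sim h^{-1}$ while the near-minimizers of $J_h$ are of size $\sim h^{-1}$ in $L^2$ and have third derivative of size $\sim h^{-1}$, the $O(h^{3/2})$ operator remainder is only barely good enough, so the truncation level $N$, the regularity of $f$ (entering through the decay of $\langle\phi_l,f\rangle_\pi$ and the growth of $\|\phi_l'''\|_\infty$), and the power of $h$ must all be tracked simultaneously. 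An essentially equivalent route, perhaps cleaner for the bookkeeping, is to argue directly with the spectral measure: Theorem~\ref{thm:diffusion-app} makes each $\phi_l$ an approximate eigenvector of $P_h$ with approximate eigenvalue $1-2hl(l+1)$, so $\Pi_f^{(h)}$ concentrates mass $\approx(2l+1)\langle\phi_l,f\rangle_\pi^2$ near $\lambda=1-2hl(l+1)$, and inserting this into $\sigma_{f,h}^2=\int_{-1}^{1}\frac{1+\lambda}{1-\lambda}\,\Pi_f^{(h)}(d\lambda)$ reproduces (\ref{eq:variance-h-approx}); the attendant error analysis, namely uniform-in-$l$ perturbation bounds on the part of $\sigma(P_h)$ near $1$, is the same obstacle in another guise.
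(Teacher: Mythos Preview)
Your reduction and your diagnosis of the obstacle are correct, but you are working harder than the paper does. The paper never tries to compare the Markov--Poisson solution $(I-P_h)^{-1}f$ with the Legendre solution. Instead it \emph{defines} $g_h:=(2h)^{-1}(-\mathcal L)^{-1}f$ from the outset, applies Theorem~\ref{thm:diffusion-app} directly to $Pg_h=g_h+L_hg_h=g_h+2h\mathcal Lg_h+O(h^{1/2})=g_h-f+O(h^{1/2})$ (the drop from $O(h^{3/2})$ to $O(h^{1/2})$ coming from $\|g_h\|\sim h^{-1}$), and reads off
\[
\langle Pf,g_h\rangle_\pi=\langle f,Pg_h\rangle_\pi=\langle f,g_h\rangle_\pi-\langle f,f\rangle_\pi+O(h^{1/2}).
\]
It then simply substitutes this into the formula $\sigma_{f,h}^2=\langle f,f\rangle_\pi+2\langle Pf,(I-P_h)^{-1}f\rangle_\pi$ and declares the result.

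That route is short, but notice what it does \emph{not} do: it never justifies replacing $(I-P_h)^{-1}f$ by $g_h$ in the variance formula. That replacement is exactly the step you singled out as delicate, and the paper glides over it. Your variational comparison of $J_h$ and $J_h^\ast$ on truncations $\tfrac{1}{2h}g_N^\ast$, together with the lower bound $\gamma_h\gtrsim h$, is one honest way to close that gap; the alternative you mention, controlling $\Pi_f^{(h)}$ near $\lambda=1$ via uniform approximate-eigenvector bounds, is essentially equivalent. Either route also needs some regularity on $f$ beyond $L_0^2$ to make Theorem~\ref{thm:diffusion-app} applicable to $g_h$ (or to its truncations), a point both you and the paper leave implicit.
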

\begin{proof}
Let $h>0$ and let $g_{h}$ be the solution of the Poisson equation
$\mathcal{L}g=-f/(2h)$. Note that by Lemma \ref{lem:series-solution},
$g_{h}=\sum_{l=1}^{\infty}a_{l,h}\phi_{l}$ where
\[
a_{l,h}=\frac{2l+1}{2hl(l+1)}\left\langle \phi_{l},f\right\rangle_\pi .
\]
By Theorem \ref{thm:diffusion-app}, $Lg_{h}=2h\mathcal{L}g_{h}+O(h^{1/2})=-f+O(h^{1/2}).$
Note that the error in the above expression is of lower order than
that in the theorem because the right hand side in the Poisson equation
contains a factor of $h^{-1}$. Next observe that
\begin{align*}
\left\langle Pf,g_{h}\right\rangle_\pi  & =\left\langle f,Pg_{h}\right\rangle_\pi \\
 & =\left\langle f,g_{h}\right\rangle_\pi +2h\left\langle f,\mathcal{L}g_{h}\right\rangle_\pi +O(h^{1/2})\\
 & =\frac{1}{2h}\sum_{l=1}^{\infty}\frac{2l+1}{l(l+1)}\left\langle \phi_{l},f\right\rangle_\pi ^{2}-\left\langle f,f\right\rangle_\pi +O(h^{1/2}).
\end{align*}
Using the expression above, along with the formula for $\sigma_{f,h}^{2}$
given in (\ref{eq:variance-Poisson}), the result then follows.
\end{proof}

\section{Analysis of the Galerkin method\label{sec:Numerical-techniques}}

In this section, we conclude with an analysis of the Galerkin method introduced in Subsection \ref{sec:Numerical-techniques-examples}, including a proof of Theorem  \ref{thm:galerkin}.
We begin with a result on the decay rates of Legendre series truncation which will be useful. It is taken from Theorem 2.2 from \cite{wang2018new}, restated slightly here to fit our notation and context.

We will give estimates in terms of the following weighted semi-norm, defined on the space of functions $u : (-1,1) \to \mathbb R$ such that the following integral is defined:

$$ \left\Vert u\right\Vert_w =  \int_{-1}^{1}\frac{|u'(x)|}{(1-x^2)^{\frac{1}{4}}} \,dx.$$

\begin{thm}[Adapted from Theorem 2.2 in \cite{wang2018new}]\label{thm:legendre_decay}
Let $m \geq 1$, and let $u : (-1,1) \to \mathbb R$ be a function such that $u, u',\ldots, u^{(m-1)}$ are absolutely continuous and the $m$-th derivative $ u^{(m)}$ is of bounded variation. Furthermore, assume that $\left\Vert u^{(m)} \right\Vert_w < \infty$. Let $a_n = (2n+1) \langle u, \phi_n \rangle$ be the sequence of coefficients in the Legendre expansion of $u$ such that $u(x) = \sum_{n = 0}^\infty a_n \phi_n(x)$.  Then, for $n \geq m+1$,
$$|a_n| \leq \frac{\left\Vert u^{(m)}\right\Vert_w}{\sqrt{\pi(2n - 2m - 1)}} \prod_{k=1}^{m} \frac{2}{2n - 2k + 1}.$$
\end{thm}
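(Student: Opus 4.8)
The plan is to follow the integration-by-parts strategy underlying Wang's original argument, adapting the boundary analysis to our normalization and to the bounded-variation hypothesis on $u^{(m)}$. The engine is the classical Legendre differentiation identity $(2n+1)\phi_n = \phi_{n+1}' - \phi_{n-1}'$ together with the endpoint values $\phi_j(\pm 1) = (\pm 1)^j$. First I would introduce iterated antiderivatives of $\phi_n$: set $\Phi_n^{(0)} = \phi_n$ and, for $k \geq 1$, let $\Phi_n^{(k)}$ be the antiderivative of $\Phi_n^{(k-1)}$ normalized to vanish at $x = \pm 1$. The identity gives $\Phi_n^{(1)} = \frac{1}{2n+1}(\phi_{n+1} - \phi_{n-1})$, which already vanishes at both endpoints, and inductively each $\Phi_n^{(k)}$ is a finite linear combination of $\phi_{n-k}, \phi_{n-k+2}, \ldots, \phi_{n+k}$ whose coefficients are products of reciprocals of the form $\frac{1}{2j+1}$. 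The structural point I would verify by induction is that for $k \leq m$ the lowest index $n-k$ stays $\geq 1$ (using $n \geq m+1$), so $\Phi_n^{(k)}$ carries no constant term, has zero mean, and hence admits an antiderivative vanishing simultaneously at both endpoints; this is what keeps every boundary term under control.

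Next I would integrate by parts $m$ times. Using that $u, u', \ldots, u^{(m-1)}$ are absolutely continuous and that each $\Phi_n^{(k)}$ with $1 \le k \le m$ vanishes at $\pm 1$, all boundary terms drop and one obtains $\langle u, \phi_n\rangle = (-1)^m \int_{-1}^1 u^{(m)}(x)\,\Phi_n^{(m)}(x)\,\pi(dx)$. Since $u^{(m)}$ is only of bounded variation, I would then perform one further Riemann--Stieltjes integration by parts against $\Phi_n^{(m+1)}$, which again vanishes at the endpoints because $\Phi_n^{(m)}$ has zero mean. This expresses $a_n = (2n+1)\langle u, \phi_n\rangle$ as a constant multiple of $\int_{-1}^1 \Phi_n^{(m+1)}(x)\,du^{(m)}(x)$, and the prefactor $(2n+1)$ cancels the leading $\frac{1}{2n+1}$ in $\Phi_n^{(m+1)}$, leaving the coefficient product $\prod_{k=1}^m \frac{1}{2n-2k+1}$ exposed.

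Finally I would estimate the Stieltjes integral. Bounding $|\Phi_n^{(m+1)}(x)|$ termwise, the slowest-decaying constituent is the lowest-index polynomial $\phi_{n-m-1}$, which reduces to the constant $\phi_0$ precisely in the endpoint case $n = m+1$. Applying a sharp Bernstein--Szeg\H{o} pointwise bound of the form $|\phi_{n-m-1}(x)| \leq \sqrt{2/(\pi(2n-2m-1))}\,(1-x^2)^{-1/4}$ supplies exactly the weight $(1-x^2)^{-1/4}$ appearing in $\|\cdot\|_w$, so integrating against the total-variation measure $d|u^{(m)}|$ returns a factor $\|u^{(m)}\|_w$ times $\sqrt{2/(\pi(2n-2m-1))}$. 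Collecting the coefficient product, the factor $2^m$ arising from the branching of the antiderivative coefficients, and this last estimate yields the stated inequality.

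I expect the main obstacle to be bookkeeping rather than conceptual: controlling the combinatorics of the antiderivative coefficients tightly enough to recover the exact constants $\prod_{k=1}^m \frac{2}{2n-2k+1}$ and $1/\sqrt{\pi(2n-2m-1)}$, in particular calibrating the Bernstein constant and the $2^m$ factor so they combine correctly, and making the bounded-variation step rigorous, ensuring the endpoint terms genuinely vanish and that it is the measure $d|u^{(m)}|$ that appears, so that $\|u^{(m)}\|_w$ indeed controls it. Since the result is explicitly adapted from \cite{wang2018new}, a legitimate alternative is to cite Wang's estimate directly and restrict the new work to reconciling the normalization of $\pi$, the definition of $a_n$, and the index range $n \geq m+1$ with the source.
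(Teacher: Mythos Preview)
The paper does not prove this theorem at all: it is quoted from \cite{wang2018new} with only a remark that the statement has been ``restated slightly here to fit our notation and context.'' So the paper's approach is exactly the alternative you mention in your last sentence---cite Wang and reconcile normalizations---rather than reproducing the integration-by-parts argument.

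Your sketch is a reasonable outline of Wang's method, but since the paper treats the result as a black box, there is nothing to compare on the level of proof strategy. If you wish to include a proof, your plan is sound in structure (iterated antiderivatives via $(2n+1)\phi_n=\phi_{n+1}'-\phi_{n-1}'$, repeated integration by parts, then a pointwise Bernstein-type bound), though as you anticipate the constants require care; in particular, the bound $\|u^{(m)}\|_w$ is defined here via $\int_{-1}^1 |u^{(m+1)}(x)|(1-x^2)^{-1/4}\,dx$ (note the derivative inside), so your final Stieltjes step against $d|u^{(m)}|$ already matches this after one more integration by parts, but you should check that the weighted total variation and the semi-norm $\|\cdot\|_w$ as written in the paper coincide. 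For the purposes of this paper, simply citing \cite{wang2018new} suffices.
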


We are now ready to prove Theorem \ref{thm:galerkin}. We begin by recalling some notation introduced in Subsection \ref{sec:Numerical-techniques-examples}. Let $T_n:L_0^2(\mathcal{X},\pi) \to R_n$ denote the orthogonal projection to the linear span $R_n=\{\phi_1, \dots, \phi_n\}$ of the first $n$ non-constant Legendre polynomials. The solution of the finite dimensional linear system $ (I-T_nP)x = T_nf$, where $f \in L_0^2(\mathcal X, \pi)$ is the given observable, will be written as $g_n$. Note that $g_n \in R_n$, and writing $g_n=\sum_{j=1}^n \alpha_j \phi_j$, it is straightforward to see that we aim to find a solution $x$ to the system $Gx = y$ where 
$$x=(\alpha_1, \dots, \alpha_n)^\intercal, \ \ y= (\langle f, \phi_1\rangle_\pi, \dots, \langle f, \phi_n\rangle_\pi)^\intercal, \ \ G=\left(\langle \phi_j,\phi_i\rangle_\pi - \langle P\phi_j, \phi_i\rangle_\pi\right)_{i,j=1}^n.$$
The solution $g_n$ will in turn be used to give the approximation $\sigma^2_{\mathrm{GM},n} := \langle T_n f, T_n f\rangle + 2\langle T_nPf, g_n\rangle$ of the diffusivity $\sigma_f^2$.

\begin{proof}[Proof of Theorem \ref{thm:galerkin}]
Throughout the proof, we take $P$ to be the restriction of the Markov operator
to the space $L_0^2(\mathcal X, \pi)$ so that $\|P\| < 1$, where $\|\cdot\|$ denotes $L^2$-operator norm.
Observe that from the definition of $\sigma^2_{\mathrm{GM},n}$,
\begin{equation}\label{eq:gm-diff}
\left|\sigma_f^2 - \sigma^2_{\mathrm{GM},n}\right| \leq \left \langle f, f\rangle_\pi - \langle T_nf, T_nf\rangle_\pi \right|
	+ 2\left| \langle Pf, g \rangle_\pi - \langle T_n Pf, g_n \rangle_\pi \right|.
\end{equation}
Our aim is to show that the two terms of the right hand side above are bounded by a common factor in terms of $n$,
which we will then show decays as in the statement of the theorem.
For the first term on the right hand side of (\ref{eq:gm-diff}), we see that
	\begin{align}
		\inner{f}{f}_{\pi} - \inner{T_nf}{T_nf}_{\pi}
		& = \|f\|_\pi^2 - \|T_nf\|_\pi^2 \nonumber \\ 
		& \leq 2\|f\|_\pi (\|f\|_\pi -\|T_nf\|_\pi ) \nonumber\\
		& \leq 2\|f\|_\pi \|f - T_nf\|_\pi \label{eq:term1},
	\end{align}
and for the second term,
	\begin{align*}
		\left|\inner{Pf}{g}_{\pi}-\inner{T_nPf}{g_n}_{\pi}\right|
		&= \left|\int_\mathcal{X} [Pf(x)g(x) - T_nPf(x)g_n(x) ]\,\pi(dx)\right|\\
		&\leq \int_\mathcal{X} |Pf(x)(g(x)-g_n(x))|\,\pi(dx) + \int_\mathcal{X}|g_n(x)(Pf(x) -T_nPf(x))|\,\pi(dx)\\
		&\leq \|Pf\|_\pi\|g-g_n\|_\pi + \|g_n\|_\pi \|Pf-T_nPf\|_\pi\\
		&\leq \|Pf\|_\pi \|(I-T_nP)^{-1} \| \|g-T_n g \|_\pi + \|g_n\|_\pi \|Pf-T_nPf\|_\pi,
	\end{align*}
	where in the last step we have used the fact that $g - g_n = (I-T_nP)^{-1}(g - T_n g)$.
	
It is straightforward to see that $T_n P = PT_n$. 
Indeed, for any function $f \in L_0^2 (\mathcal X, \pi)$, with Legendre expansion given by $f = \sum_{k=1}^\infty a_k \phi_k$, we have that $Pf(x) = \sum_{k = 1}^\infty a_k P\phi(x).$
Applying $T_n$ to both sides of this equation, we get that
$T_nPf(x) = \sum_{k=1}^n a_k P\phi(x) = PT_nf(x).$
It now follows that $$\|Pf-T_nPf\|_\pi = \|Pf-PT_nf\|_\pi \leq \|P\| \|f-T_nf\|_\pi.$$
Moreover, a similar argument gives that 
$$\|g-T_ng\|_\pi \leq \|(I-P)^{-1}\| \|f-T_nf\|_\pi \leq (1-\|P\|)^{-1}\|f-T_nf\|_\pi.$$
Finally, we note that
$$\|(I-T_nP)^{-1} \|
	\leq (1-\|T_nP\|)^{-1}
	\leq (1-\| P\|)^{-1},$$
and consequently,
$$g_n = (I-T_nP)^{-1}T_nf \leq (1-\|P\|)^{-1}\|f\|_\pi.$$
We then have for the second term on the right hand side of (\ref{eq:gm-diff}),
\begin{equation}\label{eq:term2}
\left|\inner{Pf}{g}_{\pi}-\inner{T_nPf}{g_n}_{\pi}\right| \leq  
	\|P\|(2-\|P\|)(1-\|P\|)^{-2}\|f\|_\pi\|f-T_nf\|_\pi.
\end{equation}
Applying the estimates in (\ref{eq:term1}) and (\ref{eq:term2}) to (\ref{eq:gm-diff}) and simplifying, we have that
\begin{equation}\label{eq:last-estimate}
\left|\sigma_f^2 - \sigma^2_{\mathrm{GM},n}\right| \leq 2\|f\|_\pi(1-\|P\|)^{-2}\|f - T_n f\|_\pi.
\end{equation}
It is now evident that the convergence rate will depend on the decay rate of $f$ with it's Legendre series truncation.
Observe that
$$\|f-T_nf\|_\pi \leq \sum_{k=n+1}^\infty a_k \|\phi_k\|_\pi = \sum_{k=n+1}^\infty (2k+1)^{-1/2}a_k,$$
where $a_k = (2k+1)|\langle f, \phi_k\rangle_\pi|$. Using Theorem \ref{thm:legendre_decay} with $m=1$ we get
	\begin{equation*}
		|a_k| \leq \frac{2\|f'\|_{w}}{\sqrt{\pi}(2k-1)\sqrt{2k-3}}. 
	\end{equation*}
	Thus
	$$\|f-T_nf\|_\pi
	\leq \frac{2s_n\| f'\|_{\mathrm{w}}}{\sqrt{\pi}},$$
	where
	$$s_n:=\sum_{k=n+1}^{\infty} \frac{1}{(2k-1)\sqrt{2k+1}\sqrt{2k-3}}.$$
Further, for $n>2$ we have
	\begin{align*}
		s_n &< \sum_{i=n+1}^{\infty} \frac{1}{(2i-3)^{2}} \\
		&\leq  \int_n^{\infty} \frac{dx}{(2x-3)^{2}}\\
		&=\frac{1}{4n-6}. 
	\end{align*}	
The result now follows by applying these estimates to (\ref{eq:last-estimate}).		
\end{proof}

\end{document}